\tikzset{node distance=2cm, auto}
\theoremstyle{plain}
\newtheorem{thm}{Theorem}[section]
\newtheorem{defn}[thm]{Definition}
\newtheorem{lemma}[thm]{Lemma}
\newtheorem{prop}[thm]{Proposition}
\newtheorem{cor}[thm]{Corollary}
\newtheorem{remark}[thm]{Remark}
\DeclareMathOperator{\Fil}{Fil} 
\DeclareMathOperator{\GL}{GL} 
\DeclareMathOperator{\Aut}{Aut} 
\DeclareMathOperator{\End}{End} 
\DeclareMathOperator{\Gal}{Gal} 
\DeclareMathOperator{\Lie}{Lie} 
\DeclareMathOperator{\Hom}{Hom} 	
\DeclareMathOperator{\Ker}{Ker} 
\DeclareMathOperator{\Q}{\mathbb{Q}} 
\DeclareMathOperator{\Z}{\mathbb{Z}} 
\title{\textbf{On the vanishing of cohomologies of $p$-adic 
Galois representations associated with elliptic curves}}
\author{\textbf{Jerome T. Dimabayao} 
\date{}
}
\begin{document} 
\setlength\parindent{24pt}
\maketitle

\begin{abstract} 
Let $K$ be a $p$-adic field and $E$ an elliptic curve 
over $K$ with potential good reduction. 
For some large Galois extensions $L$ of $K$ containing 
all $p$-power roots of unity, 
we show the vanishing of certain Galois cohomology groups 
of $L$ with values in the $p$-adic representation associated with $E$.
We use these to prove analogous results in the
global case.
This generalizes some results of Coates, Sujatha and Wintenberger.
\end{abstract}
 
Keywords: $p$-adic Galois representations, elliptic curves

AMS 2000 Mathematics subject classification: 11F80, 11G07, 11F85
 
\section{Introduction} 

The vanishing of cohomology groups associated with $p$-adic Galois 
representations defined by elliptic curves is one of the useful
results towards generalization of methods in Iwasawa theory 
to larger Galois extensions. Such vanishing 
enables the computation of Euler characteristics for discrete modules 
associated to $p$-adic Galois representations
\cite{CSW} and Selmer groups of elliptic 
curves over extensions containing all $p$-power 
roots of unity \cite{CH}, \cite{CSS}. Our 
purpose in this paper is to show the vanishing 
of cohomology groups with values in a geometric $p$-adic Galois 
representation with respect to some large Galois extensions. 
In particular, we consider extensions of a 
$p$-adic field obtained by adjoining the coordinates 
of $p$-power torsion points on an elliptic curve.

Let $p$ be a prime number. For the moment, we let $K$ be 
any field with characteristic not equal to $p$. 
Fix a separable closure $\overline{K}$ of $K$. 
Put $G_K := \Gal(\overline{K}/K)$, the absolute Galois 
group of $K$. 
Let $X$ be a proper smooth variety defined over $K$. 
For each $i \geq 0$, we let
\begin{equation}\label{defnetale}
V = H^{i}_{\text{\'et}} (X_{\overline{K}}, \mathbb{Q}_p) 
= \varprojlim H^{i}_{\text{\'et}} (X_{\overline{K}}, \mathbb{Z} / {p^n \mathbb{Z}}) 
\otimes_{\mathbb{Z}_p} \mathbb{Q}_p
\end{equation}
denote the $i$th \'etale cohomology group of 
$X_{\overline{K}}:= X \otimes_K {\overline{K}}$ 
having coefficients in $\mathbb{Q}_p$, which is a finite-dimensional 
vector space over $\mathbb{Q}_p$. We denote by
\[ \rho : G_K \longrightarrow \GL_{\mathbb{Q}_p}(V) \]
the homomorphism giving the action of $G_K$ on the vector space $V$.

For a general finite-dimensional vector space 
$V$ over $\mathbb{Q}_p$ and a compact subgroup 
$G$ of $\GL(V)$, we write $H^i(G,V)$ ($i = 0,1,\ldots$) for the 
cohomology groups of $G$ acting on $V$ 
defined by continuous cochains, where 
$V$ is endowed with the $p$-adic topology. 
We say that $V$ has \emph{vanishing $G$-cohomology} 
if $H^i(G,V) = 0$ for all $i \geq 0$. 

For a Galois representation $(\rho,V)$ as given above, 
we denote by $K(V)$ the fixed subfield in $\overline{K}$ 
by the kernel of $\rho$.
For a subfield $L$ of $\overline{K}$, let 
$G_L$ denote the subgroup of $G_K$ corresponding to $L$.
Let $K(\mu_{p^\infty})$ be the smallest field 
extension of $K$ which contains all $p$-power roots of unity. 
Denote by $G_V$ (resp.\ $H_V$) the image of $G_K$ 
(resp.\ $G_{K(\mu_{p^\infty})}$) under $\rho$. 
We may identify $G_V$ (resp.\ $H_V$) with the Galois 
group of $K(V)$ over $K$ (resp.\ $K(V) \cap K(\mu_{p^\infty})$). 

We assume henceforth that $K$ is a finite extension 
of $\Q_p$. We recall a theorem due to Coates, Sujatha and Wintenberger.
Although the result was originally motivated by computation
of Euler characteristics associated to $V$, this theorem 
turned out to be useful in dealing with some problems 
in non-commutative Iwasawa theory (cf.\ \cite{CSS}).

\begin{thm}[\cite{CSW}, Theorems 1.1 and 1.5]\label{thm0}
Let $X$ be a proper smooth variety defined over $K$ 
with potential good reduction. Let $i$ be a positive odd integer 
and put $V = H^{i}_{\text{\rm{\'et}}} (X_{\overline{K}}, \mathbb{Q}_p)$.
Then $V$ has vanishing $G_V$-cohomology and vanishing $H_V$-cohomology.
\end{thm}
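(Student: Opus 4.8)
The key idea is to exploit the Hodge–Tate structure of $V$. Since $X$ has potential good reduction, $V$ is a de Rham (in fact potentially crystalline) representation of $G_K$, and by the Hodge–Tate decomposition $V \otimes_{\Q_p} \mathbb{C}_p \cong \bigoplus_{j} \mathbb{C}_p(j)^{h^{i-j,j}}$ where the $h^{a,b}$ are the Hodge numbers of $X$. Crucially, because $i$ is odd, $j = i/2$ is never an integer, so the Tate twist by $0$ does not occur — every Hodge–Tate weight appearing is nonzero. The plan is to reduce the vanishing of $H^*(G_V, V)$ and $H^*(H_V, V)$ to the vanishing of Galois cohomology of $\mathbb{C}_p(j)$ for $j \neq 0$, via a comparison between the cohomology of the $p$-adic Lie group $G_V$ (or $H_V$) acting on the finite-dimensional vector space $V$ and the continuous Galois cohomology $H^*(G_K, V)$, $H^*(G_{K(\mu_{p^\infty})}, V)$.

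First I would recall the structure of $G_V$ as a compact $p$-adic Lie group: by a theorem of Sen, the Lie algebra of $G_V$ is determined by the Hodge–Tate–Sen operator, and the fact that all Hodge–Tate weights of $V$ are nonzero forces $H_V$ to be open in $G_V$ with $G_V/H_V$ essentially the image of the cyclotomic character, which is an open subgroup of $\Z_p^\times$. Second, I would pass from the abstract Lie-group cohomology $H^*(G_V,V)$ to continuous Galois cohomology $H^*(G_K, V)$: since $\rho$ factors through $G_V$ and $V$ is a faithful-enough module, these agree (the kernel of $\rho$ acts trivially, so inflation gives an isomorphism $H^*(G_V,V)\xrightarrow{\sim}H^*(G_K,V)$). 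Third — the analytic heart of the argument — I would invoke Tate's computation (as extended by Sen) of the Galois cohomology of $\mathbb{C}_p$-representations: for a Hodge–Tate representation $W$ of $G_K$ with no zero Hodge–Tate weight, $H^*(G_K, W) = 0$, because $H^0(G_K,\mathbb{C}_p(j)) = H^1(G_K,\mathbb{C}_p(j)) = 0$ for $j\neq 0$ and higher cohomology vanishes by cohomological dimension. One then bootstraps from the $\mathbb{C}_p$-coefficient statement back to $\Q_p$-coefficients using the completed tensor product and a dévissage, or more directly cites the Tate–Sen formalism that gives $H^i(G_K,V)=0$ for all $i$ when $V$ is Hodge–Tate with nonzero weights.

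For the $H_V$-part, the same reduction applies but now over the base field $K(\mu_{p^\infty})$: the representation $V|_{G_{K(\mu_{p^\infty})}}$ is still Hodge–Tate with the same (nonzero) weights, so $H^*(G_{K(\mu_{p^\infty})}, V) = 0$ by the same Tate–Sen vanishing, provided one checks that the relevant cohomology of $\mathbb{C}_p(j)$ over $K(\mu_{p^\infty})$ still vanishes for $j\neq 0$ — this uses that $K(\mu_{p^\infty})/K$ is the cyclotomic $\Z_p$-extension (up to finite pieces) and Tate's original analysis of $H^*(\Gamma, \mathbb{C}_p(j))$ where $\Gamma = \Gal(K(\mu_{p^\infty})/K)$. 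Then inflation from $H_V$ to $G_{K(\mu_{p^\infty})}$ finishes it.

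The main obstacle I anticipate is the passage between the two "flavors" of cohomology: the statement is about cohomology of the compact Lie groups $G_V, H_V$ acting on $V$ with its $p$-adic topology, whereas the clean vanishing theorems (Tate, Sen) are phrased for continuous Galois cohomology, sometimes with $\mathbb{C}_p$-coefficients. Handling the inflation-restriction spectral sequences correctly — in particular verifying that the "missing" terms (cohomology of $\Ker\rho$, or of $G_{K(V)}$) do not contribute — and making the dévissage from $\mathbb{C}_p$-coefficients down to $\Q_p$-coefficients rigorous is where the real care is needed; I expect this is exactly the technical core that \cite{CSW} works out, and I would follow their treatment, isolating the Hodge–Tate-weight hypothesis (guaranteed here by $i$ odd) as the one place where potential good reduction enters decisively.
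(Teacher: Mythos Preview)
Your proposal has a genuine gap. The claim that ``$H^i(G_K,V)=0$ for all $i$ when $V$ is Hodge--Tate with nonzero weights'' is false: Tate's results concern $\mathbb{C}_p$-coefficients, and there is no d\'evissage back to $\Q_p$-coefficients. Concretely, by the local Euler--Poincar\'e formula, for any nonzero finite-dimensional $V$ one has $\dim_{\Q_p} H^1(G_K,V) \geq [K:\Q_p]\cdot \dim_{\Q_p} V > 0$; for instance $H^1(G_K,\Q_p(1))$ has dimension $[K:\Q_p]+1$ by Kummer theory, even though $\Q_p(1)$ has the single nonzero Hodge--Tate weight $1$. So even granting your inflation step, the target groups do not vanish. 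The inflation step itself is also not an isomorphism beyond degree $0$: the Hochschild--Serre spectral sequence has $E_2^{p,q}=H^p(G_V,H^q(G_{K(V)},V))$, and since $G_{K(V)}$ acts trivially on $V$ the terms $H^q(G_{K(V)},\Q_p)\otimes V$ are large for $q\geq 1$.

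The approach actually used in \cite{CSW}, as the paper recounts, works entirely on the side of the compact $p$-adic Lie group $G_V$ (resp.\ $H_V$): by Lazard's comparison theorem the continuous cohomology $H^*(G_V,V)$ agrees (after passing to an open subgroup) with the Lie algebra cohomology $H^*(\mathfrak{g},V)$, and one then produces an element of $\mathfrak{g}\otimes\overline{\Q_p}$ acting invertibly on $V\otimes\overline{\Q_p}$, which forces the Lie algebra cohomology to vanish by Serre's criterion. Your Hodge--Tate intuition is exactly what supplies this element---it is essentially the Sen operator, whose eigenvalues are the Hodge--Tate weights and are nonzero because $i$ is odd---but the mechanism that converts this into vanishing is Lie-algebraic, not a reduction to $H^*(G_K,-)$.
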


We proceed further in view of the theorem above. 
Consider an arbitrary Galois extension $L/K$ 
contained in $\overline{K}$. 
Put $J_V = \rho(G_L)$. We then ask, when do we 
obtain vanishing $J_V$-cohomology? 
Clearly if $L$ is ``too close'' to $K(V)$, 
the vanishing cannot be attained.
In consideration of the theorem above, 
we then expect that if $L$ and $K(V)$ are
sufficiently independent over the field 
$K_{\infty,V} := K(V) \cap K(\mu_{p^\infty})$ 
(or $K$), then $V$ has vanishing $J_V$-cohomology. 
This does not necessarily mean that
the intersection $M = K(V) \cap L$ is of 
finite degree over $K_{\infty, V}$ (or $K$),
although we have found no
example in which the intersection $M$ 
is of infinite degree over $K_{\infty,V}$ and
$V$ has vanishing $J_V$-cohomology.
On a related note, what can be said 
if $L$ is defined by another geometric representation?
More precisely, suppose we have another $p$-adic
Galois representation $(\rho',V')$. 
Let us put $J_V = \rho(G_{K(V')})$
and $J_{V'} = \rho'(G_{K(V)})$. 
How does the vanishing of $J_V$-cohomology of $V$ 
relate with the vanishing of $J_{V'}$-cohomology 
of $V'$?  

Some results have been 
obtained for the vanishing of $H^0 (J_V, V)$. For instance, 
Ozeki proved in \cite{Ozeki} that when $V$ is given by an abelian 
variety with good ordinary reduction, the vanishing of 
$H^0 (J_V, V)$ is equivalent to the property that the degree
of the residue field of $L$ over the residue field $k$ of $K$  
has finite $p$-part. It was further shown
that when $V$ is given by an elliptic curve and $L$ is 
the field of division points of $p$-power order of 
another elliptic curve, $H^0 (J_V, V)$ vanishes depending 
on the types of reduction of the elliptic curves involved.
In \cite{KT}, Kubo and Taguchi have shown that $H^0 (J_V, V)$ 
vanishes in the general setting where $K$ is a complete discrete
valuation field of mixed characteristic and $L$ is a subfield 
of the Kummer extension $K(\sqrt[p^\infty]{K^{\times}})$. 

After recalling some related facts, 
we prove the following result in \S 3 which provides a simple
criterion for determining the vanishing of $J_V$-cohomology
from the Lie algebras of $\Gal(K(V)/(K(V) \cap K(\mu_{p^\infty})))$ 
and $\Gal (L/(L \cap K(\mu_{p^\infty})))$. 

\begin{thm}\label{main} 
Let $X$ be a proper smooth variety over $K$ with potential
good reduction and $i$ a positive odd integer. Put 
$V = H^{i}_{\text{\rm{\'et}}} (X_{\overline{K}}, \mathbb{Q}_p)$
and $K_{\infty,V} := K(V) \cap K(\mu_{p^\infty})$.
Let $L/K$ be any $p$-adic Lie extension such 
that $K(\mu_{p^\infty})$ is of finite degree
over $K_{\infty,L} := L \cap K(\mu_{p^\infty})$. 
Assume that the Lie algebras 
\[ \Lie(\Gal(K(V)/K_{\infty,V})) \text{ and } 
\Lie (\Gal (L/K_{\infty,L}))\] have no common 
simple factor. Then $V$ has vanishing $J_V$-cohomology, 
where $J_V = \rho(G_L)$.
\end{thm}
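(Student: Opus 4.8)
\noindent\emph{Proof proposal.} Write $\mathfrak{g}_V = \Lie(G_V)$, $\mathfrak{h}_V = \Lie(H_V)$ and $\mathfrak{j}_V = \Lie(J_V)$; note that $G_V$, $H_V$ and $J_V$ are compact $p$-adic Lie groups and $J_V \subseteq G_V$, so $\mathfrak{j}_V \subseteq \mathfrak{g}_V$. The first step is to pass to Lie algebra cohomology: recall (this being among the facts assembled in \S2) that for a compact $p$-adic Lie group $G \subseteq \GL(V)$ one has Lazard's comparison $H^i(G,V) \cong H^i(\Lie(G),V)$, possibly after replacing $G$ by an open subgroup, which is harmless for $\Q_p$-coefficients since restriction is then injective; and that the argument proving Theorem~\ref{thm0} in fact yields the vanishing of the Lie algebra cohomologies $H^i(\mathfrak{g}_V,V)=0$ and $H^i(\mathfrak{h}_V,V)=0$ for all $i$. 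Since $H_V$ is normal in $G_V$ (as $K_{\infty,V}/K$ is Galois) with $\mathfrak{g}_V/\mathfrak{h}_V \cong \Lie(\Gal(K_{\infty,V}/K))$ of dimension at most $1$ (being a quotient of $\Lie(\Gal(K(\mu_{p^\infty})/K))$), any Lie subalgebra between $\mathfrak{h}_V$ and $\mathfrak{g}_V$ is one of the two. Hence it suffices to prove $\mathfrak{h}_V \subseteq \mathfrak{j}_V$: then $\mathfrak{j}_V$ equals $\mathfrak{h}_V$ or $\mathfrak{g}_V$, and in either case $H^i(J_V,V) \cong H^i(\mathfrak{j}_V,V)=0$.

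Set $M = K(V)\cap L$ and $M_0 = M \cap K(\mu_{p^\infty})$. Using the hypothesis $[K(\mu_{p^\infty}):K_{\infty,L}]<\infty$ I may first replace $L$ by $L\cdot K(\mu_{p^\infty})$, which only shrinks $J_V$ to an open subgroup and leaves $\Lie(\Gal(L/K_{\infty,L}))$ unchanged. Since $K(V)/K$ and $L/K$ are Galois, so is $M/K$, hence also $M_0/K$; and because $M \subseteq K(V)$ one has $M_0 = M \cap K_{\infty,V}$, while because $M \subseteq L$ one has $M_0 = M \cap K_{\infty,L}$. Now $J_V = \Gal(K(V)/M)$ and $H_V = \Gal(K(V)/K_{\infty,V})$, so $J_V \cap H_V = \Gal(K(V)/MK_{\infty,V})$, and $\Lie(J_V \cap H_V) = \mathfrak{j}_V \cap \mathfrak{h}_V$. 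Thus $\mathfrak{h}_V \subseteq \mathfrak{j}_V$ is equivalent to $J_V \cap H_V$ being open in $H_V$, i.e.\ to $[MK_{\infty,V}:K_{\infty,V}] = [M:M_0] < \infty$.

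Suppose $[M:M_0]=\infty$. Then the profinite $p$-adic Lie group $\Gal(M/M_0)$ is infinite, hence of positive dimension, so $\mathfrak{m} := \Lie(\Gal(M/M_0)) \neq 0$. From $M_0 = M \cap K_{\infty,V}$ and the surjection $H_V = \Gal(K(V)/K_{\infty,V}) \twoheadrightarrow \Gal(MK_{\infty,V}/K_{\infty,V}) \cong \Gal(M/M_0)$, passing to Lie algebras shows $\mathfrak{m}$ is a quotient of $\mathfrak{h}_V = \Lie(\Gal(K(V)/K_{\infty,V}))$; symmetrically, from $M_0 = M \cap K_{\infty,L}$ and the analogous surjection $\Gal(L/K_{\infty,L}) \twoheadrightarrow \Gal(MK_{\infty,L}/K_{\infty,L}) \cong \Gal(M/M_0)$, $\mathfrak{m}$ is a quotient of $\Lie(\Gal(L/K_{\infty,L}))$. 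A nonzero finite-dimensional Lie algebra has at least one simple composition factor, and the simple factors of a quotient are among those of the ambient algebra; therefore $\mathfrak{m}\neq 0$ produces a common simple factor of $\Lie(\Gal(K(V)/K_{\infty,V}))$ and $\Lie(\Gal(L/K_{\infty,L}))$, contradicting the assumption. Hence $[M:M_0]<\infty$, so $\mathfrak{h}_V \subseteq \mathfrak{j}_V \subseteq \mathfrak{g}_V$, and by the first paragraph $H^i(J_V,V)=0$ for all $i$.

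The main obstacle, I expect, lies in the step extracting a \emph{common simple factor} from the nonzero common quotient $\mathfrak{m}$: one must use the precise sense of ``simple factor'' under which the hypothesis is phrased, and this must allow the one-dimensional abelian Lie algebra to occur as a composition factor — otherwise the statement is already false, e.g.\ for an elliptic curve with good ordinary reduction and $L$ containing the unramified $\Z_p$-extension of $K$, where $J_V$ can be pro-unipotent and $H^0(J_V,V)\neq 0$. The secondary technical point is the bookkeeping of field intersections, in particular the identity $M\cap K_{\infty,V} = M\cap K(\mu_{p^\infty}) = M\cap K_{\infty,L}$, which is what makes the two descriptions of $\mathfrak{m}$ as a quotient compatible; and one should make sure the inputs from \S2 and \cite{CSW} are available in the Lie-algebra-cohomology form used above rather than only at the level of groups.
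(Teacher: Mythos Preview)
Your argument is correct and follows essentially the same route as the paper. Both proofs reduce (after harmless finite replacements) to showing that $M = K(V)\cap L$ is a finite extension of $K(\mu_{p^\infty})$, by observing that $\Lie(\Gal(M/K(\mu_{p^\infty})))$ is simultaneously a quotient of $\Lie(\Gal(K(V)/K_{\infty,V}))$ and of $\Lie(\Gal(L/K_{\infty,L}))$ and hence must vanish under the ``no common simple factor'' hypothesis; the conclusion then follows from the vanishing of $H_V$-cohomology in Theorem~\ref{thm0}. The only cosmetic difference is that the paper first invokes Lemma~\ref{lemma1} (the determinant computation) to arrange, after a finite extension of $K$, that $K(V)\supseteq K(\mu_{p^\infty})$ and then applies Lemma~\ref{lemma6} directly, whereas you bypass this by working with $M_0 = M\cap K(\mu_{p^\infty})$ and passing to Lie algebra cohomology via Lazard; your bookkeeping identity $M\cap K_{\infty,V} = M\cap K(\mu_{p^\infty}) = M\cap K_{\infty,L}$ is exactly what makes these two packagings equivalent. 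Your worry about the meaning of ``simple factor'' is well placed but not an obstruction: the paper uses it in the Jordan--H\"older sense (so the one-dimensional abelian algebra counts), which is precisely the reading your argument needs.
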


For an elliptic curve $E$ over $K$, we denote by
\[ \rho_E : G_K \longrightarrow \GL(T_p(E)) \simeq \GL_{2}(\mathbb{Z}_p) \]
the natural continuous representation 
associated with the Tate module $T_p(E)$ of $E$. 
We use the usual notation 
$V_p(E) = T_p(E) \otimes_{\mathbb{Z}_p} \mathbb{Q}_p$. 
Let $\mathbb{Q}_p(r)$ denote the $r$th twist by the 
$p$-adic cyclotomic character, where $r \in \mathbb{Z}$.
Note that the dual $V_p(E)^{\vee} = \Hom(V_p(E),\mathbb{Q}_p)$ is 
canonically isomorphic to $H^{1}_{\text{\'et}} (E_{\bar{K}}, \mathbb{Q}_p)$.  
On the other hand, the Weil pairing allows us
to identify $V_p(E)$ with $V_p(E^\vee)$ in 
a canonical way.
Thus we may canonically identify $V_p(E)$ with
$H^{1}_{\text{\'et}} (E^{\vee}_{\bar{K}}, \mathbb{Q}_p(1))$.
We also note that $K(V_p(E)) = K(E_\infty)$, 
where $K(E_\infty)$ is the extension of $K$ 
generated by the coordinates of all the $p$-power 
torsion points on the group of $\overline{K}$-valued
points $E(\overline{K})$. By the Weil pairing, 
the field $K(E_\infty)$ contains $K(\mu_{p^\infty})$.  

In \S \ref{sec:ord} we consider the setting where
the variety in question has good ordinary reduction.
For elliptic curves with good ordinary reduction over 
$K$, we obtain a necessary and sufficient condition
on $L$ so that the $p$-adic Galois representation 
$V=V_p(E)$ has vanishing $J_V$-cohomology. 

\begin{thm}(see Theorem \ref{thm2A} 
and Corollary \ref{van-ord-abelvar})\label{thmord} 
Let $E$ be an elliptic curve with potential 
good ordinary reduction over $K$ and $L$ be a 
Galois extension of $K$.
Put $V=V_p(E)$ and $J_V = \rho_E(G_L)$.  \\
(1) If the residue field $k_L$ of 
$L$ is a potential prime-to-$p$ extension
of the residue field $k$ of $K$
(in the sense of Definition \ref{prime-to-p}),
then $V$ has vanishing $J_V$-cohomology. \\
(2) Assume that $E$ has good ordinary reduction 
over $K$ and that $L$ contains $K(\mu_{p^\infty})$ 
and the coordinates of the $p$-torsion points of $E$.
Then $V$ has vanishing $J_V$-cohomology if and 
only if $k_L$ is a potential prime-to-$p$ 
extension over $k$.
\end{thm}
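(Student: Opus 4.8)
The plan is to use the reducibility of $V = V_p(E)$ at $p$ in the ordinary case and to reduce the statement to the cohomology of one-dimensional characters of the $p$-adic Lie group $J_V$. First I would reduce to the case in which $E$ has good ordinary reduction over $K$ itself: choosing a finite extension $K'/K$ over which $E$ acquires good ordinary reduction and putting $L' = LK'$, the group $J'_V := \rho_E(G_{L'})$ is open in $J_V$, so restriction--corestriction gives an injection $H^i(J_V, V) \hookrightarrow H^i(J'_V, V)$ for all $i$ (the composite being multiplication by $[J_V : J'_V]$, invertible on a $\Q_p$-vector space), and one checks that $k_{L'}$ is a potential prime-to-$p$ extension of $k_{K'}$ exactly when $k_L$ is one of $k$; hence it suffices to treat $L'$ over $K'$, and we may assume $E$ has good ordinary reduction over $K$. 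Then there is an exact sequence of $\Q_p[G_K]$-modules $0 \to V^+ \to V \to V^- \to 0$ with $V^+$ one-dimensional, $G_K$ acting through $\chi\psi^{-1}$ (here $\chi$ is the cyclotomic character), and $V^-$ unramified with $G_K$ acting through an unramified character $\psi : G_K \to \Z_p^\times$ whose value on Frobenius is the unit root of the Frobenius polynomial of $\widetilde{E}$. Restricted to $G_L$ this is a sequence of $\Q_p[J_V]$-modules; $J_V$ lies in the Borel of upper-triangular matrices, and $V^+$, $V^-$ are the one-dimensional characters $c^+ := \overline{\chi\psi^{-1}}$, $c^- := \bar\psi$ of $J_V$, with $c^+ c^- = \bar\chi = \det\rho_E$.

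For part (1) I would show that the hypothesis forces $H^*(J_V, V^+) = H^*(J_V, V^-) = 0$, so that $H^*(J_V, V) = 0$ by the long exact sequence. Writing $U_V = J_V \cap U$ for $U$ the unipotent radical (so $U_V \cong \Z_p$ or trivial) and $D_V$ for the image of $J_V$ in the diagonal torus, the Hochschild--Serre dévissage along $1 \to U_V \to J_V \to D_V \to 1$ identifies, for a character $\eta$ of $J_V$ that is trivial on $U_V$, the cohomology $H^*(J_V, \Q_p(\eta))$ with the cohomology of the abelian group $D_V$ with coefficients in $\Q_p(\eta)$ and in $\Q_p(\eta\, c^-/c^+)$; and for an abelian compact $p$-adic Lie group $D$ one has $H^*(D, \Q_p(\theta)) = 0$ in all degrees exactly when $\theta \neq 1$ (Künneth, together with the facts that finite groups have no higher $\Q_p$-cohomology and that $\gamma - 1$ acts invertibly on $\Q_p(\theta)$ whenever $\theta(\gamma) \neq 1$). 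Taking $\eta = c^+$ and $\eta = c^-$ and simplifying with $c^+ c^- = \bar\chi$, one finds that $V$ has vanishing $J_V$-cohomology as soon as each of the three characters $c^-$, $c^+$ and $\bar\psi^3\bar\chi^{-1}$ is nontrivial on $G_L$. The potential prime-to-$p$ hypothesis is exactly what delivers this: by Kronecker's theorem the unit root $\alpha = \psi(\Fr)$ is never a root of unity (all its archimedean absolute values equal $\sqrt{\#k} > 1$), and since $\psi$ is unramified, $\psi|_{G_L}$ has image the closure of the subgroup of $\Z_p^\times$ generated by $\alpha^{[k_L:k]}$, which is infinite --- in particular $c^- = \bar\psi \neq 1$ --- precisely when $[k_L:k]$ has finite $p$-part, i.e.\ precisely when $k_L$ is a potential prime-to-$p$ extension of $k$; the two remaining characters, which involve $\chi$, are then disposed of by contrasting the infinite ramification of $\chi$ along $L$ with the unramifiedness of $\psi$.

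For part (2), the implication that $k_L$ being a potential prime-to-$p$ extension of $k$ implies vanishing is part (1). Conversely, suppose $k_L$ is not a potential prime-to-$p$ extension of $k$; then $[k_L:k]$ has infinite $p$-part, so $\psi(G_L)$ is finite by the computation above. Since $L \supseteq K(E[p])$, the group $J_V$ lies in $1 + pM_2(\Z_p)$, so $c^- = \bar\psi$ takes values in the torsion-free pro-$p$ group $1 + p\Z_p$ and is therefore trivial; and since $L \supseteq K(\mu_{p^\infty})$ we have $\chi|_{G_L} = 1$, so $c^+ = \overline{\chi\psi^{-1}}$ is trivial as well. Hence $J_V$ acts on $V$ through the unipotent group, so $V^{J_V} \supseteq V^+ \neq 0$ and $V$ does not have vanishing $J_V$-cohomology.

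I expect the main obstacle to be the last step of the second paragraph: proving, under the potential prime-to-$p$ hypothesis, that $c^+ = \overline{\chi\psi^{-1}}$ and $\bar\psi^3\bar\chi^{-1}$ are nontrivial on $G_L$. This amounts to ruling out a coincidence between the (infinitely, and possibly wildly, ramified) cyclotomic character $\chi$ and an unramified power of $\psi$ along the arbitrary extension $L/K$, and it is exactly here that the precise form of the definition of a potential prime-to-$p$ extension, together with the ordinariness of $E$ (which makes $\psi$ genuinely unramified with a non-root-of-unity value on Frobenius), must be brought to bear.
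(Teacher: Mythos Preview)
Your approach differs substantially from the paper's. The paper replaces $L$ by $L(\mu_{p^\infty})$, then shows via the conjugation identity $\sigma\cdot h=h^{\chi\psi^{-2}(\sigma)}$ that $H_V/J_V$ acts unipotently on each $H^n(J_V,V)$ (Lemmas~\ref{lemB} and~\ref{lemA}); combining this with the vanishing of $H^*(H_V,V)$ from Theorem~\ref{thm0} and Hochschild--Serre, an induction on $n$ finishes. You instead d\'evisse directly along the Borel filtration $0\to V^+\to V\to V^-\to 0$ and the extension $1\to U_V\to J_V\to D_V\to 1$, reducing everything to the nontriviality of three explicit characters of the abelian group $D_V$. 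When it works, your route is more elementary: it never invokes Theorem~\ref{thm0}. Your argument for the converse in part~(2) is correct (with a trivial adjustment at $p=2$, where $1+2\Z_2$ has $2$-torsion) and more self-contained than the paper's, which defers to \cite{Ozeki}.

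The obstacle you flag in part~(1), however, is genuine and cannot be overcome in the stated generality: part~(1) is false without an extra hypothesis on $L$. Take $L$ to be the fixed field of $\varphi=\chi\psi^{-1}$. This is an abelian extension of $K$; since $\varphi|_{I_K}=\chi|_{I_K}$ has open image in $\Z_p^\times$, the residue extension $k_L/k$ has degree $[\varphi(G_K):\varphi(I_K)]<\infty$, so the potential prime-to-$p$ hypothesis holds. But $G_L=\ker\varphi$, so $J_V$ fixes $V^+$ pointwise and $H^0(J_V,V)\neq 0$. In your language, $c^+$ is trivial on $J_V$ by construction, and no ramification argument can repair this. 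The paper's proof has the matching gap: replacing $L$ by $L(\mu_{p^\infty})$ need not preserve the residue-field hypothesis (for the $L$ above, $L(\mu_{p^\infty})=K(\chi,\psi)$ contains the unramified $\Z_p$-extension cut out by $\psi$), so the assertion in the proof of Lemma~\ref{lemA} that $k_M$ is potentially prime-to-$p$ is unjustified. Under the additional hypothesis $L\supset K(\mu_{p^\infty})$ --- which is present in part~(2) and in every application the paper makes of part~(1) --- your three characters collapse to $\psi$, $\psi^{-1}$, $\psi^3$ on $G_L$, your Weil-number argument shows $\psi|_{G_L}$ has infinite image, and both proofs go through cleanly.
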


Now we consider the case where $L=K(V')$ is 
given by another ``geometric" representation $V'$. 
In this case, we often encounter that $V'$
also has vanishing $J_{V'}$-cohomology, where
$J_{V'}=\rho'(G_{K(V)})$.
This motivates us to introduce the 
notion of ``cohomological coprimality" in 
\S \ref{cohom_coprime}.

\begin{thm}(see Theorem \ref{thm3})\label{thm} 
Let $X$ be a proper smooth variety over $K$ with potential
good ordinary reduction (in the sense of Definition \ref{BK}) 
and $i$ a positive odd integer. Let $E/K$ be an elliptic curve  
with potential good supersingular reduction. 
Put $V = H^{i}_{\text{\rm{\'et}}} (X_{\overline{K}}, \mathbb{Q}_p)$
and $V' = V_p(E)$. Then $V$ and $V'$ are cohomologically coprime.
\end{thm}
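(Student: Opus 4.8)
The plan is to unwind the definition of cohomological coprimality from \S\ref{cohom_coprime} into its two component vanishing statements and to establish each by a single application of Theorem \ref{main}. Concretely, one must show that $V$ has vanishing $\rho(G_{K(V')})$-cohomology and that $V'$ has vanishing $\rho'(G_{K(V)})$-cohomology. For the first, apply Theorem \ref{main} with $L=K(V')=K(E_\infty)$; for the second, apply it with $E$ (and $i=1$) in the role of $X$ and with $L=K(V)$. Here $\rho_E(G_K)$ is a $p$-adic Lie group, so both $K(E_\infty)/K$ and $K(V)/K$ are $p$-adic Lie extensions; and, by the Weil pairing identification $V_p(E)\cong H^1_{\text{\'et}}(E^\vee_{\overline K},\mathbb{Q}_p(1))$ recalled in the introduction, $V'=V_p(E)$ falls within the scope of Theorems \ref{thm0} and \ref{main}, with $K(V')=K(E_\infty)$.

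One next checks the finiteness hypothesis of Theorem \ref{main} in each direction. For $L=K(E_\infty)$ it is immediate: the Weil pairing gives $K(\mu_{p^\infty})\subseteq K(E_\infty)$, so $K_{\infty,L}=K(\mu_{p^\infty})$ and $[K(\mu_{p^\infty}):K_{\infty,L}]=1$. For $L=K(V)$ one needs that $K(\mu_{p^\infty})$ is of finite degree over $K(V)\cap K(\mu_{p^\infty})$; since $i$ is odd, cup product together with hard Lefschetz furnishes a perfect alternating $G_K$-equivariant pairing $V\times V\to\mathbb{Q}_p(-i)$, so $\det V$ is a quadratic twist of $\mathbb{Q}_p(-i\dim V/2)$, and $K(V)$ contains the field cut out by $\det V$, which meets $K(\mu_{p^\infty})$ in a subfield of finite index (this is also recorded in the preliminary sections).

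The decisive point is the Lie-algebra condition, namely that
\[ \Lie\bigl(\Gal(K(V)/K_{\infty,V})\bigr)\quad\text{and}\quad\Lie\bigl(\Gal(K(E_\infty)/K(\mu_{p^\infty}))\bigr) \]
have no common simple factor, and this is where the two reduction hypotheses enter, in opposite ways. Since $X$ has potential good ordinary reduction, over a finite extension $K'/K$ the representation $V|_{G_{K'}}$ is ordinary in the sense of Definition \ref{BK}, hence stabilizes a filtration whose graded pieces are unramified twists of Tate twists $\mathbb{Q}_p(r)$; the group $\rho(G_{K'})$ is then block-upper-triangular for this filtration and acts through an abelian group on the graded pieces, so $\Lie(\rho(G_K))=\Lie(\rho(G_{K'}))$ is solvable, and therefore so is its subalgebra $\Lie(\Gal(K(V)/K_{\infty,V}))$, every simple factor of which is one-dimensional. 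Since $E$ has potential good supersingular reduction, on the other hand, $\rho_E(G_{K(\mu_{p^\infty})})$ is --- up to a finite extension --- an open subgroup of $\mathrm{SL}_2(\mathbb{Z}_p)$, a standard consequence of the theory of one-dimensional formal groups of height two, so that $\Lie(\Gal(K(E_\infty)/K(\mu_{p^\infty})))=\mathfrak{sl}_2(\mathbb{Q}_p)$, which is simple and non-abelian. Hence the two Lie algebras share no simple factor, Theorem \ref{main} applies in both directions, and $V$ and $V'$ are cohomologically coprime.

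The main obstacle is precisely this pair of Lie-algebra computations: the solvability of $\Lie(\Gal(K(V)/K_{\infty,V}))$, which rests on the structure theory of ordinary $p$-adic representations behind Definition \ref{BK}, and the identification of $\Lie(\Gal(K(E_\infty)/K(\mu_{p^\infty})))$ with $\mathfrak{sl}_2(\mathbb{Q}_p)$ for potential good supersingular reduction. The remaining subtlety is the case in which $E$ has complex multiplication, where this second Lie algebra is not $\mathfrak{sl}_2(\mathbb{Q}_p)$ but a one-dimensional anti-cyclotomic torus and so \emph{does} share a (one-dimensional) simple factor with the solvable first one; there one argues directly instead, using that $K(E_\infty)/K(\mu_{p^\infty})$ is then an almost totally ramified abelian extension which is not cyclotomic and hence not contained in $K(V)$, so that $\rho_E(G_{K(V)})$ surjects onto its Galois group, a group acting on $V_p(E)$ with no non-zero invariants, from which the required vanishing follows by the same mechanism as in Theorem \ref{thm0}.
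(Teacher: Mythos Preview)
Your overall plan --- to verify both vanishing conditions by invoking Theorem \ref{main} after computing the relevant Lie algebras --- is exactly the paper's approach, and your treatment of the case where $E$ has no formal complex multiplication is correct and essentially identical to the paper's: the solvability of $\Lie(\Gal(K(V)/K_{\infty,V}))$ (from the block-upper-triangular shape forced by ordinariness) against the simplicity of $\mathfrak{sl}_2(\mathbb{Q}_p)$ leaves no common simple factor, and Theorem \ref{main} applies in both directions.

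The gap is in your handling of the formal-CM case. You correctly flag that Theorem \ref{main} no longer applies because both Lie algebras now have one-dimensional simple factors, but your proposed substitute --- that $K(E_\infty)/K(\mu_{p^\infty})$ is ``not cyclotomic and hence not contained in $K(V)$'' --- is a non-sequitur: nothing in the hypotheses prevents $K(V)$ from containing non-cyclotomic abelian extensions of $K(\mu_{p^\infty})$, and this is precisely what must be ruled out. The paper's argument (in the proof of Theorem \ref{thm3}) does so by exploiting a tension your sketch does not use. On one side, ordinariness of $V$ forces $\Gal(K(V)/N_\infty)$, with $N_\infty = K(V)\cap K^{\mathrm{ur}}(\mu_{p^\infty})$, to act unipotently on $V$, so its Lie algebra --- and hence that of $\Gal(M/(M\cap N_\infty))$ for $M=K(V)\cap K(E_\infty)$ --- is nilpotent. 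On the other side, in the FCM case $\Lie(\Gal(K(E_\infty)/K(\mu_{p^\infty})))$ is (the image in $\mathfrak{sl}_2$ of) a non-split Cartan subalgebra, so its elements, and those of any quotient such as $\Lie(\Gal(M/K(\mu_{p^\infty})))$, are semisimple. A Lie algebra that is nilpotent and consists of semisimple elements is zero, so $M/(M\cap N_\infty)$ is finite. Finally $M\cap N_\infty$ is unramified over $K(\mu_{p^\infty})$ while $K(E_\infty)/K(\mu_{p^\infty})$ is almost totally ramified (Proposition \ref{prop3}(i) gives $\mathfrak i=\mathfrak g$), so $M\cap N_\infty$ is also finite over $K(\mu_{p^\infty})$, and Lemma \ref{lemma6} (or Corollary \ref{cor1}) then yields both vanishing statements at once. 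Your phrase ``almost totally ramified'' is the right ingredient for this last step, but you are missing the nilpotent-versus-semisimple argument that handles the first.
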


Suppose $E$ and $E'$ are elliptic curves over $K$.
In \S \ref{sec:vanish for EC}, we prove some
results on the cohomological coprimality of
$V_p(E)$ and $V_p(E')$. This is done by
distinguishing the reduction types of $E$ and $E'$.
We summarize our results in the following theorem.
We also note that its proof provides extensions 
to some of the results obtained in \cite{Ozeki}.

\begin{thm} 
Let $E$ and $E'$ be elliptic curves over $K$. 
The cohomological coprimality of  
$V_p(E)$ and $V_p(E')$ is given by the following table:

\begin{center}
\begin{tabular}{|c|c|c|}
\hline
$E$ & $E'$ & Cohomologically coprime \\ 
\hline
\multirow{3}{3cm}{\centering ordinary} & 
	\multirow{1}{5cm}
		{\centering ordinary} & No$^{\sharp}$ \\
	\cline{2-3}
		{} & supersingular & Yes \\
	\cline{2-3}
		{} & multiplicative & ``No" \\	
\hline
\multirow{3}{3cm}{\centering supersingular \\ with FCM} & \multirow{1}{5cm}{\centering supersingular with FCM} & Yes$^{\ast}$ \\
\cline{2-3}
{} & \multirow{1}{5cm}{\centering supersingular without FCM} & Yes \\
\cline{2-3}
{} & \multirow{1}{5cm}{\centering multiplicative} & ``No" \\
\hline
\multirow{2}{3cm}{\centering supersingular \\ without FCM} & \multirow{1}{5cm}{\centering supersingular without FCM} & Yes$^{\ast}$ \\
\cline{2-3}
{} & \multirow{1}{5cm}{\centering multiplicative} & ``No" \\
\hline
multiplicative & multiplicative & ``No"\\
\hline
\end{tabular}
\end{center}
\end{thm}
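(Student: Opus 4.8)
The plan is to assemble the table from Theorems~\ref{main}, \ref{thm0}, \ref{thmord} and \ref{thm} by a case analysis organised according to reduction type. I first record two reductions. Cohomological coprimality is symmetric in $V$ and $V'$ by definition. Moreover, in the supersingular cases below the groups $\rho_E(G_{K(E'_\infty)})$ that occur are orientable (their Lie algebras — $\mathfrak{sl}_2$, abelian ones, finite ones — are unimodular), so that $H^i(J,W)\cong H^{\dim J-i}(J,W^\vee)^\vee$; using that $V_p(E)$ and its dual $H^1_{\text{\'et}}(E_{\overline{K}},\mathbb{Q}_p)=V_p(E)^\vee$ cut out the same field, a vanishing statement for one is equivalent to the same statement for the other, and this lets me feed $H^1_{\text{\'et}}(E_{\overline{K}},\mathbb{Q}_p)$ — which is of the form treated in Theorems~\ref{main} and \ref{thm0} — into those theorems and transfer the conclusion back to $V=V_p(E)$. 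Finally, since $K(E'_\infty)\supseteq K(\mu_{p^\infty})$ for every elliptic curve $E'$, the field $L=K(E'_\infty)$ automatically satisfies the hypothesis of Theorem~\ref{main}: here $K_{\infty,L}=K(\mu_{p^\infty})$, of index $1$ in $K(\mu_{p^\infty})$.

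The computational heart is the identification of $\mathfrak{g}_E:=\Lie\big(\Gal(K(E_\infty)/K(\mu_{p^\infty}))\big)$ according to the reduction type of $E$, read off from the standard description of $\rho_E$ (the connected--\'etale filtration $0\to L'\to V_p(E)\to L''\to 0$ with $L''$ unramified in the ordinary case; Lubin--Tate theory and Serre's open-image results in the supersingular case; the Tate curve in the multiplicative case): $\mathfrak{g}_E$ is the two-dimensional non-abelian solvable algebra $\mathfrak{b}\cap\mathfrak{sl}_2$ (the ``$ax+b$'' algebra) when $E$ has potential good ordinary reduction without CM; $\mathfrak{g}_E$ is one-dimensional abelian when $E$ has potential good ordinary reduction with CM, or potential good supersingular reduction with FCM; $\mathfrak{g}_E\cong\mathfrak{sl}_2(\mathbb{Q}_p)$ when $E$ has potential good supersingular reduction without FCM; and $\mathfrak{g}_E$ is one-dimensional abelian (the Kummer direction) when $E$ has multiplicative reduction — in which last case $E$ has no potential good reduction and Theorems~\ref{main}, \ref{thm0}, \ref{thmord} do not apply. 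The two superscript-free ``Yes'' cells now follow directly. ``Ordinary versus supersingular'' is precisely Theorem~\ref{thm} applied with $X=E$ and the supersingular curve $E'$ (both FCM types of $E'$ are covered, the theorem requiring only potential good ordinary, resp.\ supersingular, reduction). ``Supersingular with FCM versus supersingular without FCM'': here $\{\mathfrak{g}_E,\mathfrak{g}_{E'}\}=\{\text{$1$-dimensional abelian},\ \mathfrak{sl}_2\}$, which have no common simple factor, so Theorem~\ref{main} — applied once with $L=K(E'_\infty)$ and once with $L=K(E_\infty)$ — gives vanishing $J_V$-cohomology of $V$ and vanishing $J_{V'}$-cohomology of $V'$; no genericity hypothesis is needed since a curve with FCM and one without are never isogenous, their $p$-adic representations having images of different dimension.

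The two ``Yes$^{\ast}$'' cells, in which $E$ and $E'$ have the same supersingular type, are the substantive ones: then $\mathfrak{g}_E\cong\mathfrak{g}_{E'}$ and Theorem~\ref{main} carries no information. The superscript flags the hypothesis that $E$ and $E'$ be non-isogenous over $\overline{K}$ — equivalently $K(E_\infty)\neq K(E'_\infty)$ — without which $J_V$ can be trivial and $H^0(J_V,V)=V\neq 0$. Granting it, the plan is: (i) control the image of $\rho_E\times\rho_{E'}$ by a Goursat-type argument — Serre's methods in the non-FCM case, a comparison of the two Lubin--Tate towers in the FCM case — to conclude that $J_V=\rho_E(G_{K(E'_\infty)})$ is either open in $H_V=\rho_E(G_{K(\mu_{p^\infty})})$ or finite and non-trivial; (ii) in the open case, $V|_{J_V}$ is, up to a finite-order twist, the standard two-dimensional representation of an open subgroup of $\mathrm{SL}_2(\mathbb{Z}_p)$ (non-FCM), resp.\ of an open subgroup of a one-dimensional $p$-adic torus acting on $F\cong\mathbb{Q}_p^2$ with $F/\mathbb{Q}_p$ quadratic (FCM), and $H^{\ast}(J_V,V)=0$ in both — by Lazard this reduces to Lie algebra cohomology, with $H^{\ast}(\mathfrak{sl}_2,\mathrm{std})=0$ by Whitehead's lemmas together with the unimodularity of $\mathfrak{sl}_2$ in top degree, and $H^{\ast}$ of a one-dimensional abelian $p$-adic Lie algebra acting on $F$ vanishing because a suitable generator acts with all eigenvalues $\neq 1$; (iii) in the finite non-trivial case, $V^{J_V}=0$ since a non-trivial element of $\mathrm{SL}_2(\mathbb{Z}_p)$ — or of a torus inside $F^{\times}$ — with eigenvalue $1$ must be trivial, and the higher cohomology of a finite group with $\mathbb{Q}_p$-coefficients vanishes. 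By symmetry this gives cohomological coprimality.

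It remains to treat the ``No''/``No'' cells. For ``ordinary versus ordinary'' (the superscript $\sharp$ recording the precise reduction and isogeny hypotheses, cf.\ \cite{Ozeki}): $\mathfrak{g}_E\cong\mathfrak{g}_{E'}$ share a simple factor, so Theorem~\ref{main} is silent; and in fact $K(E'_\infty)$ contains the unramified $\mathbb{Z}_p$-extension of $K$, which — the unit root of Frobenius not being a root of unity — up to a bounded amount also cuts out the \'etale quotient character of $E$, so that $\rho_E|_{G_{K(E'_\infty)}}$ lies, up to finite order, in the unipotent radical; hence $H^0(J_V,V)\neq 0$ and the pair is not cohomologically coprime. (Equivalently: the residue field of $K(E'_\infty)$ has infinite $p$-part over $k$, and one concludes by Theorem~\ref{thmord}.) For every cell involving a curve of multiplicative reduction the answer is ``No'' in the degenerate sense that such a curve lies outside the standing potential-good-reduction hypothesis: if, say, $E'$ is a Tate curve, the filtration $0\to\mathbb{Q}_p(1)\to V_p(E')\to\mathbb{Q}_p\to 0$ together with $L:=K(E_\infty)\supseteq K(\mu_{p^\infty})$ forces $H^0\big(\rho_{E'}(G_L),V_p(E')\big)\supseteq\mathbb{Q}_p(1)^{G_L}=\mathbb{Q}_p(1)\neq 0$, so coprimality fails already at $H^0$. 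The main obstacle is steps (i)--(ii) in the two ``Yes$^{\ast}$'' cases: precisely where the curves share a supersingular type the Lie-algebra criterion of Theorem~\ref{main} is vacuous, so one must instead pin down the image of the product representation (this is where the non-isogeny input is used) and compute the continuous cohomology of a \emph{proper open} subgroup of $\mathrm{SL}_2(\mathbb{Z}_p)$, rather than of the full image as in Theorem~\ref{thm0}; the annotations $\sharp$ and $\ast$ in the table mark exactly the hypotheses under which these supplementary arguments succeed.
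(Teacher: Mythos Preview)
Your case analysis and your handling of the unstarred ``Yes'' entries (ordinary versus supersingular via Theorem~\ref{thm}, mixed FCM type via Theorem~\ref{main}) and of the ``No'' entries (via Theorem~\ref{thmord}/Remark~\ref{rem3} and the Tate-curve filtration, as in Remark~\ref{cohom-coprime-mult}) agree with the paper's. The Poincar\'e-duality detour is unnecessary: since $K(V_p(E))=K(H^1_{\text{\'et}})$ and $V_p(E)\simeq H^1_{\text{\'et}}(E^\vee_{\overline K},\Q_p)(1)$, the paper simply applies Theorems~\ref{main} and~\ref{thm0} (or rather their proofs) to $V_p(E)$ directly.

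The Yes$^\ast$ cells are where you diverge, and there is a genuine gap. First, you have misidentified the asterisked hypothesis: the paper requires that $E(K(E'_\infty))[p^\infty]$ be finite, which by Remark~\ref{rem4} is equivalent to $K(E_\infty)\not\subset K(E'_\infty)$ --- not to ``$E$ and $E'$ non-isogenous over $\overline K$'', and not to ``$K(E_\infty)\neq K(E'_\infty)$''. Your claimed equivalence fails in both directions: by Corollary~\ref{cor_ss} the relevant obstruction is isogeny of the \emph{$p$-divisible groups} $\mathcal{E}(p)$ and $\mathcal{E}'(p)$, not of the curves, and one can have $K(E_\infty)\subsetneq K(E'_\infty)$ strictly (same-type curves give same-dimensional Lie extensions, so a strict inclusion is of finite index). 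In that situation your hypothesis ``$K(E_\infty)\neq K(E'_\infty)$'' is satisfied but $J_V=\rho_E(G_{K(E'_\infty)})$ is trivial, whence $H^0(J_V,V)=V\neq 0$; so your dichotomy ``open or finite \emph{non-trivial}'' in step~(i) is not what a Goursat argument actually delivers under your hypothesis.

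Second, even with the correct hypothesis your route is more elaborate than needed. The paper's Lemma~\ref{lemma2} shows directly that $M=K(E_\infty)\cap K(E'_\infty)$ is finite over $K(\mu_{p^\infty})$: in the FCM case because a closed subgroup of $\Z_p$ of infinite index is trivial (forcing $K(E_\infty)\subset L$, contradicting the hypothesis), and in the non-FCM case because $\Lie(J_V)$ is an ideal of $\Lie(H_V)\simeq\mathfrak{sl}_2(\Q_p)$, hence $0$ or everything, and $0$ again forces $K(E_\infty)\subset L$ after a finite extension. Thus $J_V$ is open in $H_V$, and Lemma~\ref{lemma6} (i.e.\ Theorem~\ref{thm0}) finishes. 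No Goursat lemma, no explicit Lie-cohomology computation, and no ``finite non-trivial'' sub-case are required. As a smaller point, your step~(iii) asserts that a non-trivial element of $\mathrm{SL}_2(\Z_p)$ with eigenvalue $1$ is trivial; this is false as stated (consider unipotents) and only holds for elements of finite order.
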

In the table above, FCM means formal complex multiplication. 
The symbol $^{\ast}$ means conditional cohomological
coprimality. The cohomological coprimality in 
this case holds under the additional assumption that 
the group $E(K(E_\infty))[p^\infty]$ of 
$K(E_\infty)$-rational points of 
$E$ of $p$-power order is finite. For $\sharp$, refer 
to Remark \ref{rem3}. For the case where one of the
elliptic curves has multiplicative reduction, we refer 
to Remark \ref{cohom-coprime-mult}.
The rest is provided by Theorem \ref{thm6} 
in \S\ref{subsec:good reduction}. 

In the last section we use the local results above to
prove cohomological coprimality results associated
with ``global" $p$-adic Galois representations.
Indeed, the proof of Theorem \ref{thm0} in \cite{CSW} 
relies on showing the existence of some special operator in the
Lie algebra 
$\mathfrak{h} = \Lie(\rho(G_{K(\mu_{p^\infty})})) \otimes_{\Q_p} \overline{\Q_p}$. 
Such operator satisfies a criterion introduced by
Serre (cf.\ \cite{Ser3}) for the vanishing of Lie algebra cohomology groups,
and hence Theorem \ref{thm0} follows by a 
well-known result due to Lazard.
In general, enlarging the field $L$ makes 
the corresponding image group $J_V$ 
smaller and hence the special operator of
$\mathfrak{h}$ may no longer lie in the 
Lie algebra 
$\mathfrak{j}=\Lie(J_V) \otimes_{\Q_p} \overline{\Q_p}$. 
Our methods in the local case ensure 
that such operator still belongs to $\mathfrak{j}$
which in turn belongs to the corresponding Lie algebra of the
image of the global Galois group.  

The proof of Theorem \ref{thm0} as described above 
also implies that $V$ has vanishing $G_V$-cohomology 
if and only if $V$ has vanishing $G$-cohomology for 
an open subgroup $G$ of $G_V$. The same statement holds for $H_V$.  
In this paper Galois representations are defined by
objects which satisfy a ``potential condition over $K$", 
in the sense that the given condition is satisfied
after a suitable finite extension $K'$ of $K$.
In most of our proofs we will often encounter the phrase 
``replacing $K$ by a finite extension" in order
to reduce the proof to a more convenient
setting, e.g.\ so that the varieties in question 
have good reduction over $K$ and that $K(V)$ 
contains all the $p$-power roots
of unity. By the above remark, 
there is no harm in doing this kind of reduction.

\noindent \emph{Acknowledgements.} 
The author would like to express his 
sincere gratitude to Yuichiro Taguchi,
who proposed the theme of this paper, for his patience, 
helpful advice and contributions to this paper. 
He would also like to thank the Ministry 
of Education, Culture, Sports, Science and Technology of Japan 
for its support. He is currently on study leave from the Institute 
of Mathematics, University of the Philippines Diliman.

\section{Preliminaries and review of some known results}

\subsection{Vanishing Cohomology and Cohomological Coprimality}\label{cohom_coprime} 

Let $G$ be a topological group and $F$ be a topological field. 
A continuous $F$-linear representation of $G$
is a finite-dimensional $F$-vector space $V$ equipped with a 
continuous linear action of $G$. Equivalently, 
it is a continuous homomorphism $\rho: G \rightarrow \GL_{F}(V)$.
We denote a continuous $F$-linear representation 
of $G$ by $(\rho,V)$. 
For a prime number $p$,
a continuous $\Q_{p}$-linear representation of $G$
is called a $p$-adic representation of $G$.

For a general finite-dimensional $F$-vector space $V$
and a compact subgroup $\mathcal{G}$ of $\GL_{F}(V)$, 
we consider the cohomology groups $H^i(\mathcal{G},V)$
($i = 0, 1, \ldots$) of $\mathcal{G}$ acting on $V$ 
which are defined by continuous cochains 
(where $V$ is endowed with the topology induced by $F$).
\begin{defn}\label{van-cohom} 
\normalfont
For $V$ and $\mathcal{G}$ as above, we say that $V$ 
has \emph{vanishing $\mathcal{G}$-cohomology} 
if $H^i(\mathcal{G},V) = 0$ for all $i \geq 0$.
\end{defn}
 
\begin{defn}\label{defn_cohom_coprime}		 
\normalfont
Let $(\rho,V)$ and $(\rho',V')$ be two 
continuous $F$-linear representations of $G$.
Put $\mathcal{G} = \rho(\Ker \rho')$ and  
$\mathcal{G}' = \rho'(\Ker \rho)$.
We say that $V$ and $V'$ are
\emph{cohomologically coprime} 
if $V$ has vanishing $\mathcal{G}$-cohomology and 
$V'$ has vanishing $\mathcal{G}'$-cohomology. 
\end{defn}

\begin{remark}
\normalfont
In fact, the above notion of cohomological coprimality 
can be formulated for topological modules
over a topological ring with a continuous 
action by a fixed topological group $G$. Moreover, 
two topological modules (with a continuous action by
$G$) being compared against need not 
have the same ring of scalars. For our purposes, 
we restrict our definition in the above form.
\end{remark}

\subsection{The setup}\label{setup}

Let $K$ be a finite extension of $\Q_p$.
For a $p$-adic representation 
$(\rho,V)$ of $G_K$, 
we denote by $K(V)$ the fixed subfield of 
$\overline{K}$ by the kernel of $\rho$. 
Let $\mu_{m}$ denote the group of $m$th roots
of unity for $m \in \Z$ with $m \geq 1$. 
Denote by $\mu_{p^\infty}$ the
union of all $\mu_{p^n}$ as $n$ runs over the 
set of all positive integers.
We identify $G_V = \rho(G_K)$ with the Galois group 
$\Gal(K(V)/K)$. 
For an arbitrary Galois extension $L/K$,
we may identify $J_V = \rho(G_L)$ with a 
closed subgroup of $G_V$, whose fixed field 
$M = K(V)^{J_V}$ is the intersection 
of $K(V)$ and $L$. Then the Galois group 
$\Gal(M/K)$ may be identified with a quotient of 
$G_V$. This latter group is a $p$-adic Lie group and 
thus, so is $\Gal(M/K)$ (cf.\ \cite{DSMS}, Theorem 9.6 (ii)).
Hence, $M/K$ is a $p$-adic Lie extension. 
If $L = K(\mu_{p^\infty})$, we write $H_V$ 
for $\rho(G_L)$ instead of $J_V$ and 
$M$ in this case will be written as 
$K_{\infty,V} := K(V) \cap K(\mu_{p^\infty})$.
When $V = V_p(E) = T_p(E) \otimes_{\Z_p} \Q_p$ is 
given by an elliptic curve $E$ over $K$,
we write $K(E_\infty)$ instead of $K(V)$.
In this case we have $K_{\infty,V} = K(\mu_{p^\infty})$ 
by the Weil pairing.

We have the following diagram of fields:
\begin{center}
\begin{tikzpicture}[%
  back line/.style={densely dotted},
  cross line/.style={preaction={draw=white, -,line width=6pt}}]
  \node (A) {$K(V)$};  
  \node (B) [below of=A, node distance=1.75cm] {$M$};
  \node [below of=A, node distance=1.5cm, left of=B, node distance=1.75cm] (C) {$K_{\infty,V}$};
  \node [below of=C, node distance=1.75cm] (D) {$K$};
  
  \draw[cross line] (A) -- (B)  -- (D);  
  \draw[cross line] (A) -- (C)  -- (D);	  
  \draw[-, bend right] (A) to node [left]{$H_V$} (C);
  \draw[-, bend left] (A) to node [right]{$J_V$} (B);
  \draw[-, bend right = 70pt] (A) to node [left]{$G_V$} (D); 
  \end{tikzpicture}
\end{center}

In particular, if $L$ contains $K(\mu_{p^\infty})$, 
then $J_V$ is a closed normal subgroup of $H_V$ 
and $M$ contains $K_{\infty, V}$.

\subsection{Some lemmas}
Let $L$ be a Galois extension of $K$ which contains
$K(\mu_{p^\infty})$. Put $\mathscr{G} = \Gal(L/K)$ and 
$\mathscr{H} = \Gal(L/K(\mu_{p^\infty}))$.
Let $\varepsilon : \mathscr{G} \rightarrow \Z_p^{\times}$ 
be a continuous character of $\mathscr{G}$ 
whose image is open in $\Z_p^{\times}$.
The group $\mathscr{G}$ acts on $\mathscr{H}$ by inner automorphisms, that is, for 
$\sigma \in \mathscr{G}$ and $\tau \in \mathscr{H}$, we have 
$\sigma \cdot \tau = \sigma \tau \sigma^{-1}$. 
Assume that the following relation holds:
\begin{equation}\label{eqn1}
\sigma \cdot \tau = \tau^{\varepsilon(\sigma)} 
\end{equation} 
for all $\sigma \in \mathscr{G}$, $\tau \in \mathscr{H}$.

\begin{lemma}\label{lemB}
Let $(\psi, W)$ be a $p$-adic representation of $\mathscr{G}$.
Let $\varepsilon$ be a character as above
and suppose the action of $\mathscr{G}$
on $\mathscr{H}$ satisfies relation (\ref{eqn1}).
Then after a finite extension $K'/K$, the subgroup
$\mathscr{H}$ acts unipotently on $W$.
\end{lemma}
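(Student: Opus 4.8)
The plan is to pass, after a finite extension of $K$, to a situation in which $\psi(\mathscr H)$ lies in $1+pM_d(\Z_p)$, and then to read relation (\ref{eqn1}) through the logarithm, where it becomes an eigenvalue constraint forcing nilpotence. First I would replace $K$ by a finite extension so that, for a suitable $\Q_p$-basis of $W$ (with $d=\dim_{\Q_p}W$), the image $\psi(\mathscr H)$ is contained in $1+pM_d(\Z_p)$, and in $1+4M_d(\Z_2)$ when $p=2$. This is legitimate: $\psi(\mathscr H)$ is a compact subgroup of $\GL_d(\Q_p)$, hence conjugate into $\GL_d(\Z_p)$, where it meets the principal congruence subgroup in an open subgroup of finite index; the fixed field in $L$ of the corresponding open subgroup of $\mathscr H$ is finite over $K(\mu_{p^\infty})$, so it equals $K'(\mu_{p^\infty})$ for a finite extension $K'/K$ (write it as $K(\mu_{p^\infty})(\alpha)$ and set $K'=K(\alpha)$), and replacing $K$ by $K'$ preserves the hypotheses --- relation (\ref{eqn1}) restricts to $\Gal(L/K')$ and $\mathscr H$, and $\varepsilon(\Gal(L/K'))$ is still open in $\Z_p^\times$. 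After this reduction $\log$ and $\exp$ are mutually inverse bijections between $\Gamma:=\psi(\mathscr H)\subseteq 1+pM_d(\Z_p)$ and $\mathfrak g:=\log\Gamma\subseteq pM_d(\Z_p)$, $\Z_p$-powers in the uniform group $1+pM_d(\Z_p)$ are given by $g^u=\exp(u\log g)$, and $\psi|_{\mathscr H}$ factors through the maximal pro-$p$ quotient of $\mathscr H$, on which $\Z_p$-exponentiation is defined, so that $\psi(\tau^{\varepsilon(\sigma)})=\psi(\tau)^{\varepsilon(\sigma)}$ for all $\sigma\in\mathscr G$, $\tau\in\mathscr H$.

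Now put $X_\tau:=\log\psi(\tau)\in\mathfrak g$ for $\tau\in\mathscr H$. By (\ref{eqn1}) the element $\psi(\sigma)\psi(\tau)\psi(\sigma)^{-1}=\psi(\tau^{\varepsilon(\sigma)})=\psi(\tau)^{\varepsilon(\sigma)}=\exp(\varepsilon(\sigma)X_\tau)$ lies in $\Gamma$, hence in the region where $\log$ converges; since $\log$ intertwines conjugation by the invertible matrix $\psi(\sigma)$ there, taking logarithms yields
\[
\psi(\sigma)\,X_\tau\,\psi(\sigma)^{-1}=\varepsilon(\sigma)\,X_\tau\qquad(\sigma\in\mathscr G,\ \tau\in\mathscr H).
\]
Fix $\tau$ and let $\mu_1,\dots,\mu_d\in\overline{\Q_p}$ be the eigenvalues of $X_\tau$, with multiplicity. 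As conjugation preserves eigenvalues, the multiset $\{\mu_i\}$ is stable under multiplication by $\varepsilon(\sigma)$ for every $\sigma\in\mathscr G$. Since $\varepsilon(\mathscr G)$ is open in $\Z_p^\times$ it contains an element $u$ of infinite order, and a finite multiset stable under multiplication by such a $u$ cannot contain a nonzero element, whose $\langle u\rangle$-orbit would be infinite. Hence every $\mu_i=0$, so $X_\tau$ is nilpotent and $\psi(\tau)=\exp(X_\tau)$ is unipotent. As $\tau$ was arbitrary, $\mathscr H$ acts unipotently on $W$ over the finite extension $K'$, as required.

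I expect the crux to be the translation of (\ref{eqn1}) into the identity $\psi(\sigma)X_\tau\psi(\sigma)^{-1}=\varepsilon(\sigma)X_\tau$: one must verify that $\psi(\tau^{\varepsilon(\sigma)})$ really is the $\Z_p$-power $\psi(\tau)^{\varepsilon(\sigma)}$ (using that $\psi(\mathscr H)$ is pro-$p$) and that taking $\log$ of $\psi(\sigma)\psi(\tau)\psi(\sigma)^{-1}$ --- which lies in $\Gamma$ by (\ref{eqn1}), hence where the series converges --- gives the conjugate of $\log\psi(\tau)$. By comparison, the finite-extension bookkeeping of the first step is routine, and, once the identity is available, the eigenvalue argument closing the proof is immediate. (Alternatively one can bypass the logarithm: after the first reduction the eigenvalues $\lambda_i$ of $\psi(\tau)$ lie in a disc around $1$ that contains no nontrivial root of unity, whereas relation (\ref{eqn1}) forces each $\lambda_i$ to be a root of unity, so $\lambda_i=1$ for all $i$.)
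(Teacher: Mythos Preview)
Your argument is correct, but it follows a genuinely different route from the paper's. The paper works multiplicatively with the eigenvalues $\lambda_i$ of $\psi(\tau)$ itself: relation~(\ref{eqn1}) implies that the multiset $\{\lambda_i\}$ is stable under $\lambda\mapsto\lambda^{\varepsilon(\sigma)}$, and by choosing $1+p^e\in\varepsilon(\mathscr G)$ one finds a single integer $m$ (depending only on $d$ and $e$, not on $\tau$) with $\lambda_i^m=1$ for all $i$ and all $\tau$. Thus $\mathscr H^m$ already acts unipotently, the semisimplification of $\psi|_{\mathscr H}$ is a sum of characters into $\mu_m$, and the finite extension $K'/K$ is chosen \emph{afterwards} to trivialize these finitely many characters. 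You reverse the order: you make the finite extension \emph{first}, forcing $\psi(\mathscr H)$ into the principal congruence subgroup, and then linearize via the logarithm so that relation~(\ref{eqn1}) becomes $\psi(\sigma)X_\tau\psi(\sigma)^{-1}=\varepsilon(\sigma)X_\tau$, whence the additive eigenvalue multiset is stable under scaling by an infinite-order unit and must be $\{0\}$. Your approach buys a cleaner endgame (nilpotence of $X_\tau$ gives unipotence immediately, with no second passage to a finite extension), at the cost of invoking the $\log/\exp$ machinery on $1+pM_d(\Z_p)$ and the bookkeeping that $\psi(\tau^{\varepsilon(\sigma)})=\psi(\tau)^{\varepsilon(\sigma)}$; the paper's approach is more elementary and makes the uniform bound $m$ explicit. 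Your parenthetical alternative---eigenvalues in $1+p\overline{\Z_p}$ (respectively $1+4\overline{\Z_2}$) can only be roots of unity if they equal $1$---is essentially a hybrid of the two and is the shortest path of all.
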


\begin{proof} 
Put $d = \dim_{\Q_p} W$.
The result is trivial if $d = 0$. We assume
henceforth that $d$ is nonzero.
We may argue in the same manner as 
the proof of Lemma 2.2 of \cite{KT}.
Let $\tau \in \mathscr{H}$ and 
$\lambda_1, \ldots, \lambda_d$ be the
eigenvalues of $\psi(\tau)$.
Then relation (\ref{eqn1}) shows that
\[ \{ \lambda_1, \ldots, \lambda_d \}
= \{ \lambda_1^{\varepsilon(\sigma)}, \ldots, \lambda_d^{\varepsilon(\sigma)} \} \]
for all $\sigma \in \mathscr{G}$.
Let $e$ be a positive integer such that
$1 + p^e$ lies in $\varepsilon(\mathscr{G})$. 
Such an integer exists since 
$\varepsilon(\mathscr{G})$ is open in 
$\mathbb{Z}_p^\times$.
For each $i = 1, \ldots, d$, there exists
an integer $r_i$ with $1 \leq r_i \leq d$
such that $\lambda_i^{(1+p^e)^{r_i}} = \lambda_i$.
We then put 
\[ m = \text{LCM} \{ (1 + p^e)^r - 1| r = 1,\cdots,d \}. \]
With this choice of $m$ we see that 
$\psi(\tau)^m$ is unipotent
since $\lambda_i^m = 1$ for all $i = 1, \ldots, d$.
Hence $\mathscr{H}^m = \{ \tau^m | \tau \in \mathscr{H} \}$ 
acts unipotently on $W$. 
Then the semisimplification
of the restriction $\psi|_{\mathscr{H}}$ to $\mathscr{H}$ is
a sum of characters $\mathscr{H}/\mathscr{H}^m \rightarrow \mu_m$, 
after a suitable extension of scalars. 
These characters become trivial upon replacing
$K(\mu_{p^\infty})$ by a finite extension, say $K_{\mu}$.
In fact, $K_{\mu} = K'(\mu_{p^\infty})$ for some
finite extension $K'$ of $K$. 
\end{proof}

In this paper, $\chi : \mathscr{G} \rightarrow \mathbb{Z}_p^{\times}$
always denotes the $p$-adic cyclotomic character
(i.e.\ the continuous character such that 
$g(\zeta) = \zeta^{\chi(g)}$ 
for all $g \in \mathscr{G}$ and all 
$\zeta \in \mu_{p^\infty}$).
The above lemma will be used later in \S 4
in the case where $\varepsilon$ is the 
product of $\chi$ with another 
continuous character of $\mathscr{G}$.

\begin{lemma}\label{lemC}
Let $\varphi : \mathscr{U} \rightarrow \GL_{\Q_p}(W)$ be 
a representation of a group $\mathscr{U}$ 
on a finite-dimensional $\Q_p$-vector space $W$.
Suppose $\mathscr{U}$ acts unipotently on $W$.
Then $W^\mathscr{U} = 0$ if and only if $W = 0$.
\end{lemma}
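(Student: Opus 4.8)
The plan is to prove the nontrivial implication in contrapositive form: if $W \neq 0$ then $W^{\mathscr{U}} \neq 0$; the reverse implication is immediate since $W = 0$ forces $W^{\mathscr{U}} = 0$. This is essentially Kolchin's theorem — a group of unipotent operators on a nonzero finite-dimensional vector space has a nonzero common fixed vector — which one may simply cite, but I sketch a self-contained argument below.

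First I would reduce to an algebraically closed base field. Since $\overline{\Q_p}/\Q_p$ is faithfully flat, taking $\mathscr{U}$-invariants commutes with $-\otimes_{\Q_p}\overline{\Q_p}$, so $W^{\mathscr{U}} = 0$ if and only if $(W\otimes_{\Q_p}\overline{\Q_p})^{\mathscr{U}} = 0$, while $W = 0$ if and only if $W\otimes_{\Q_p}\overline{\Q_p} = 0$; moreover an operator is unipotent if and only if it is so after this base change, as the characteristic polynomial is unchanged. Hence we may assume $W$ is a nonzero finite-dimensional $\overline{\Q_p}$-vector space on which every $\varphi(u)$ acts unipotently. Next, choose a simple $\overline{\Q_p}[\varphi(\mathscr{U})]$-submodule $W' \subseteq W$, which exists and is nonzero because $W$ is finite-dimensional and nonzero; it suffices to show $W'^{\mathscr{U}} \neq 0$, since $W'^{\mathscr{U}} \subseteq W^{\mathscr{U}}$. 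Note that $\mathscr{U}$ still acts unipotently on $W'$, being the restriction of a unipotent action to an invariant subspace.

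On the simple module $W'$ I would then run the standard trace argument. By Burnside's theorem, the $\overline{\Q_p}$-span $A$ of $\{\varphi(u)|_{W'} : u \in \mathscr{U}\}$ inside $\End_{\overline{\Q_p}}(W')$ is the whole endomorphism algebra. For each $u \in \mathscr{U}$, unipotence gives $\mathrm{tr}(\varphi(u)|_{W'}) = \dim_{\overline{\Q_p}} W'$; applying this to $u$ and to $uv$ yields
\[ \mathrm{tr}\bigl(\varphi(u)|_{W'}\,(\varphi(v)|_{W'} - \mathrm{id})\bigr) = \mathrm{tr}(\varphi(uv)|_{W'}) - \mathrm{tr}(\varphi(u)|_{W'}) = 0 \]
for all $u,v \in \mathscr{U}$. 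By linearity, $\mathrm{tr}\bigl(X(\varphi(v)|_{W'} - \mathrm{id})\bigr) = 0$ for every $X \in A = \End_{\overline{\Q_p}}(W')$, and since the trace pairing on $\End_{\overline{\Q_p}}(W')$ is nondegenerate we conclude $\varphi(v)|_{W'} = \mathrm{id}$ for all $v$. Thus $\mathscr{U}$ acts trivially on $W'$, so $W'^{\mathscr{U}} = W' \neq 0$, and the proof is complete.

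The one genuine obstacle is the phenomenon this argument must get around: an arbitrary family of nilpotent operators need not have a common kernel, so no naive linear-algebra induction works; it is precisely the closure of $\varphi(\mathscr{U})$ under multiplication — forcing every product to be unipotent, hence the trace identities above — that is exploited via Burnside. Everything else (faithfully flat descent, passage to a simple submodule, nondegeneracy of the trace form) is routine. If one wishes to avoid Burnside, an alternative is to show that the two-sided ideal of $\overline{\Q_p}[\varphi(\mathscr{U})]$ generated by the elements $\varphi(u) - 1$ is nil — all of its elements have vanishing trace powers, hence are nilpotent in characteristic zero by Newton's identities — therefore nilpotent, so it cannot act surjectively on $W$; dualizing then produces an invariant vector. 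The Burnside version is shorter.
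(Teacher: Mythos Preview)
Your proof is correct; it is the standard Kolchin argument, and each step (faithfully flat base change to $\overline{\Q_p}$, passage to a simple submodule, Burnside, nondegeneracy of the trace pairing) is sound. Note that your approach genuinely exploits the multiplicative structure of $\varphi(\mathscr{U})$ --- the key input is that \emph{products} $\varphi(uv)$ are again unipotent, which is what drives the trace identity.

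The paper takes a different and ostensibly more elementary route: assuming $W^{\mathscr{U}} = 0$, it fixes $w \in W$ and $u \in \mathscr{U}$, chooses $n$ minimal with $(\varphi(u)-1)^n w = 0$, sets $w' = (\varphi(u)-1)^{n-1} w$, observes $(\varphi(u)-1)w' = 0$, and concludes $w' \in W^{\mathscr{U}} = 0$, forcing $w = 0$. As written, however, this step is the very pitfall you flag in your final paragraph: the argument only shows $w'$ is fixed by the \emph{single} element $u$, not by all of $\mathscr{U}$, so the conclusion $w' \in W^{\mathscr{U}}$ does not follow. A family of nilpotent operators need not share a kernel vector without some further structure, and the paper's argument never uses the group law. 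Your Burnside/trace proof is longer but actually closes this gap; alternatively, one could repair the paper's approach by first invoking Kolchin (or Engel's theorem on the Lie algebra side) to put $\varphi(\mathscr{U})$ simultaneously in upper-triangular unipotent form, after which the elementary induction goes through --- but that is essentially what you have done.
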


\begin{proof}
One implication is trivial. We assume that $W^\mathscr{U} = 0$
and let $w \in W$. Since the action of $\mathscr{U}$ on $W$ is unipotent,
for all $u \in \mathscr{U}$ the element $\varphi(u) - 1$ is
nilpotent. Thus, there exists $n \in \Z_{\geq1}$ such 
that $(\varphi(u)-1)^n(w) = 0$ but 
$(\varphi(u)-1)^{n-1}(w) \neq 0$.
If $n=1$, then $\varphi(u)(w) = w$. So $w = 0$ by hypothesis.
If $n>1$, put $w' = (\varphi(u)-1)^{n-1}(w)$. Note that
$w' \neq 0$. But then we have
$(\varphi(u)-1)(w') = (\varphi(u)-1)^n(w) = 0$. 
Thus $w' \in W^{\mathscr{U}} = 0$. 
Therefore we must have $w=0$.
\end{proof}

\subsection{Ordinary Representations}

\begin{defn}\label{ordrepn} 
\normalfont 
A $p$-adic Galois representation $V$ of $G_K$ is said to be 
\emph{ordinary} if there exists a filtration by $G_K$-invariant subspaces 
$\{ \Fil^i V \}_{i \in \mathbb{Z}}$ with the following properties:
\begin{center}
$\Fil^{i+1} V \subseteq \Fil^i V$ for all $i$, \\
$\Fil^i V = V$ for $i \ll 0$ and
$\Fil^i V = 0$ for $i \gg 0$, 
\end{center}
such that the inertia subgroup $I_K$ acts on the $i$th graded quotient
$\Fil^i V / \Fil^{i+1} V$ by the $i$th power of the $p$-adic cyclotomic
character. 
\end{defn}

\begin{defn}\label{BK} 
\normalfont
We say that a proper smooth variety $X$ over $K$ has 
\emph{good ordinary reduction} over $K$ if there exists a 
smooth proper model $\mathfrak{X}$ over $\mathcal{O}_K$ 
with special fiber $\mathcal{Y}$ such that the de Rham-Witt cohomology groups 
$H^r(\mathcal{Y}, d\Omega_\mathcal{Y}^s)$ are trivial for all $r$ and all $s$. 
We say $X$ has \emph{potential good ordinary reduction} over $K$ if
it has good ordinary reduction after a finite extension $K'/K$. 
\end{defn}
In the above definition, $d\Omega_\mathcal{Y}^s$ is the sheaf 
of exact differentials on $\mathcal{Y}$. 
This definition is due to Bloch-Kato (cf.\ \cite{BK}, Definition 7.2). 
Equivalent formulations for this definition are given in Proposition 7.3 
of \emph{op.\ cit.} When $X$ is an abelian variety of dimension $g$, 
this definition coincides with the property that the 
group of $\bar{k}$-points of $\mathcal{Y}$ killed by $p$ is isomorphic to 
$\left( \mathbb{Z}/{p \mathbb{Z}} \right)^g$, which is the 
classical definition of an abelian variety with good ordinary reduction. 
Here, $\bar{k}$ denotes an algebraic closure of the residue field
$k$ of $K$.

The $\text{\'e}$tale cohomology groups of 
a proper smooth variety with good ordinary 
reduction can be characterized by the
following result of Illusie:
\begin{thm}[\cite{LI}, Cor. 2.7]
Let $X$ be a proper smooth variety over $K$ which has 
good ordinary reduction over $K$. Then 
the $\text{\'e}$tale cohomology group 
$H^{i}_{\text{\rm{\'et}}} (X_{\overline{K}}, \mathbb{Q}_p)$ ($i \geq 0$) 
is an ordinary representation of $G_K$. 
\end{thm}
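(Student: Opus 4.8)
The plan is to deduce this from the crystalline comparison theorem together with the de Rham--Witt description of ordinary reduction. Since $X$ has good reduction over $K$, the representation $V=H^{i}_{\text{\'et}}(X_{\overline K},\mathbb{Q}_p)$ is crystalline (Fontaine--Messing, Faltings, Tsuji), and the crystalline comparison isomorphism identifies the filtered $\varphi$-module $D:=D_{\mathrm{cris}}(V)$ with $H^{i}_{\mathrm{cris}}(\mathcal{Y}/W)\otimes_W K_0$, where $W=W(k)$ and $K_0=W[1/p]$ is the maximal unramified subextension of $K$; here $\varphi$ is induced by the Frobenius of crystalline cohomology and the Hodge filtration by that of $H^{i}_{\mathrm{dR}}(\mathfrak{X}/\mathcal{O}_K)\otimes_{\mathcal{O}_K}K$. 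The strategy is then (i) to use the ordinariness of $\mathcal{Y}$ to show that $D$ is an \emph{ordinary} filtered $\varphi$-module --- i.e.\ there is a $\varphi$-stable decreasing filtration $\{D^{\geq j}\}_j$ on $D$ such that $p^{-j}\varphi$ is bijective on each graded piece and $\Fil^{j}(D\otimes_{K_0}K)=D^{\geq j}\otimes_{K_0}K$ --- and then (ii) to transport this structure back to $V$ via Fontaine's classification of ordinary crystalline representations.

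For (i), the Bloch--Kato condition $H^r(\mathcal{Y},d\Omega_{\mathcal{Y}}^s)=0$ (with the reformulations of \cite{BK}, Prop.~7.3) is exactly the hypothesis under which the Illusie--Raynaud theory of the de Rham--Witt complex shows that the slope spectral sequence $E_1^{s,t}=H^{t}(\mathcal{Y},W\Omega_{\mathcal{Y}}^{s})\Rightarrow H^{s+t}_{\mathrm{cris}}(\mathcal{Y}/W)$ degenerates at $E_1$, that each $H^{t}(\mathcal{Y},W\Omega_{\mathcal{Y}}^{s})$ is free of finite rank over $W$ with Frobenius acting as $p^{s}$ times a bijection, and hence that $H^{i}_{\mathrm{cris}}(\mathcal{Y}/W)\otimes_W K_0$ decomposes $\varphi$-equivariantly as $\bigoplus_{j}M_j$ with $M_j$ pure of slope $j$ and $\dim_{K_0}M_j=h^{i-j,j}$. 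Putting $D^{\geq j}=\bigoplus_{j'\geq j}M_{j'}$ makes $p^{-j}\varphi$ bijective on $D^{\geq j}/D^{\geq j+1}$. Ordinariness also forces the Newton polygon of $H^{i}_{\mathrm{cris}}$ to coincide with its Hodge polygon, and Mazur's theorem relating Frobenius to the Hodge filtration then pins down $\Fil^{j}(D\otimes_{K_0}K)=D^{\geq j}\otimes_{K_0}K$; thus $D$ is ordinary.

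For (ii), each $D^{\geq j}$ is a sub-$\varphi$-module of the weakly admissible $D$ with the induced filtration, and the computation $\Fil^{a}(D^{\geq j}\otimes_{K_0}K)=D^{\geq\max(a,j)}\otimes_{K_0}K$ shows $t_H(D^{\geq j})=t_N(D^{\geq j})$, so $D^{\geq j}$ is itself weakly admissible; by Colmez--Fontaine it equals $D_{\mathrm{cris}}(\Fil^{j}V)$ for a unique $G_K$-stable subspace $\Fil^{j}V\subseteq V$. This produces the filtration required in Definition \ref{ordrepn}. Its $j$th graded quotient has crystalline module $D^{\geq j}/D^{\geq j+1}$, which has a single Hodge jump at $j$ and $\varphi=p^{j}u$ with $u$ a unit, so $\mathrm{gr}^{j}V\cong\mathbb{Q}_p(-j)\otimes\eta$ for an unramified character $\eta$; hence $I_K$ acts on $\mathrm{gr}^{j}V$ through $\chi^{j}$, as demanded.

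The genuine obstacle is step (i): deducing from the vanishing $H^r(\mathcal{Y},d\Omega_{\mathcal{Y}}^s)=0$ that the $F$-crystal $H^{i}_{\mathrm{cris}}(\mathcal{Y}/W)$ is ordinary (Newton polygon $=$ Hodge polygon, with the canonical splitting into slope pieces). This is precisely what the de Rham--Witt formalism of Illusie--Raynaud is designed to deliver and is the heart of \cite{LI}, Cor.~2.7; the remaining ingredients --- the crystalline comparison theorem and Fontaine's dictionary for ordinary representations --- are formal. For the case most relevant to this paper, an abelian variety $A$ with good ordinary reduction, one can bypass the general machinery: the connected--\'etale sequence $0\to\widehat{\mathcal{A}}[p^\infty]\to\mathcal{A}[p^\infty]\to(\mathcal{A}[p^\infty])^{\text{\'et}}\to 0$ of $p$-divisible groups over $\mathcal{O}_K$ is exact with multiplicative and \'etale outer terms, and applying $T_p(-)\otimes_{\mathbb{Z}_p}\mathbb{Q}_p$ exhibits $V_p(A)$, hence $H^{1}_{\text{\'et}}(A_{\overline K},\mathbb{Q}_p)$, as a two-step ordinary representation directly.
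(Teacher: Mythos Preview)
The paper does not prove this statement at all; it simply quotes it as Corollary~2.7 of Illusie~\cite{LI} and moves on. So there is no ``paper's own proof'' to compare against --- your sketch is in fact a reconstruction of the argument behind the cited reference, combining the de~Rham--Witt analysis of~\cite{BK},~\cite{LI} (slope spectral sequence degeneration, Newton${}={}$Hodge) with the crystalline comparison theorem and the Fontaine/Perrin-Riou dictionary~\cite{PR} between ordinary filtered $\varphi$-modules and ordinary Galois representations. That is the correct route, and your identification of step~(i) as the substantive input is accurate.

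One small slip: you write $\mathrm{gr}^{j}V\cong\mathbb{Q}_p(-j)\otimes\eta$ and then conclude that $I_K$ acts through $\chi^{j}$; but $I_K$ acts on $\mathbb{Q}_p(-j)$ via $\chi^{-j}$, so as written your filtration has $I_K$ acting on the $j$th graded piece by $\chi^{-j}$, not $\chi^{j}$. To match Definition~\ref{ordrepn} literally you must reindex (set $\Fil^{i}V$ to be what you called $\Fil^{-i}V$, or equivalently work with the increasing slope filtration rather than the decreasing one). This is purely a bookkeeping convention and does not affect the substance of the argument.
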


\subsection{Lie algebras associated with elliptic curves}

Consider an elliptic curve $E$ over $K$. 
The structure of the Lie algebras associated to $E$ is well-known 
(cf.\ \cite{Ser1}, Appendix of Chapter IV):

\begin{prop}\label{prop3}
Let $E$ be an elliptic curve over $K$. 
Let $\mathfrak{g}:= \Lie(\rho_E(G_K))$ and 
$\mathfrak{i}:= \Lie(\rho_E(I_K))$ be 
the Lie algebras of the image of $G_K$ and 
its inertia subgroup $I_K$ under $\rho_E$, respectively 
(These are Lie subalgebras of $\End (V_p(E))$).
\begin{enumerate}
	\item[(i)] If $E$ has good supersingular reduction 
	with formal complex multiplication, then $\mathfrak{g}$ 
	is a non-split Cartan subalgebra of $\End(V_p(E))$
	and $\mathfrak{i} = \mathfrak{g}$. We have  
	$\dim \mathfrak{g} = \dim \mathfrak{i}=2$. 
	\item[(ii)] If $E$ has good supersingular reduction 
	without formal complex multiplication, then 
	$\mathfrak{g} = \End(V_p(E))$ and 
	$\mathfrak{i} = \mathfrak{g}$. We have  
	$\dim \mathfrak{g} = \dim \mathfrak{i} = 4$.
	\item[(iii)] If $E$ has good ordinary reduction 
	with complex multiplication, then $\mathfrak{g}$ 
	is a split Cartan subalgebra of $\End(V_p(E))$. 
	We have $\dim \mathfrak{g} = 2$ and $\mathfrak{i}$ 
	is a $1$-dimensional subspace of $\mathfrak{g}$. 
	\item[(iv)] If $E$ has good ordinary reduction 
	without complex multiplication, then $\mathfrak{g}$ 
	is the Borel subalgebra of $\End(V_p(E))$ which 
	corresponds to the kernel of the reduction map 
	$V_p(E) \rightarrow V_p(\widetilde{E})$. 
	We have $\dim \mathfrak{g} = 3$ and 
	$\mathfrak{i}$ is a $2$-dimensional subspace of 
	$\mathfrak{g}$ with $\mathfrak{i}/[\mathfrak{i},\mathfrak{i}]$ 
	of dimension $1$.
	\item[(v)] If $E$ has $j$-invariant with negative $p$-adic valuation, 
	then $\mathfrak{g}$ is the subalgebra of $\End(V_p(E))$ 
	which consists of the endomorphisms $u$ for which $u(V_p(E)) \subset W$,
	where $W$ is the unique $G_K$-stable $1$-dimensional subspace of $V_p(E)$. 
	Moreover, $\mathfrak{i} = \mathfrak{g}$. We have 
	$\dim \mathfrak{g} = \dim \mathfrak{i} = 2$

\end{enumerate}   
\end{prop}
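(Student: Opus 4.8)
The plan is to argue case by case according to the reduction type of $E$, using N\'eron--Ogg--Shafarevich, Tate's theory of $p$-adic uniformization, the connected--\'etale sequence of the $p$-divisible group $E[p^{\infty}]$, and Serre's description of the image of $\rho_E$ (cf.\ \cite{Ser1}, Appendix of Chapter IV). Throughout I would use that replacing $K$ by a finite extension $K'$ changes neither $\mathfrak{g}$ nor $\mathfrak{i}$ up to the natural identifications, since $\rho_E(G_{K'})$ is open in $\rho_E(G_K)$ and $\rho_E(I_{K'})$ is open in $\rho_E(I_K)$. First I would settle case (v): if $v_p(j(E))<0$ then, after a finite extension, $E$ is a Tate curve $E_q$ with $q\in K^{\times}$ and $v_p(q)>0$; in the basis of $T_p(E)$ furnished by the compatible systems $(\zeta_{p^{n}})_n$ and $(q^{1/p^{n}})_n$ the matrix of $\rho_E(\sigma)$ is $\left(\begin{smallmatrix}\chi(\sigma)&c(\sigma)\\0&1\end{smallmatrix}\right)$, where $c$ is the Kummer cocycle attached to $q$. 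Thus $\rho_E(G_K)$ lies in $\{u:u(V_p(E))\subset W\}$ with $W$ the line spanned by the first basis vector, and this $2$-dimensional algebra is all of $\mathfrak{g}$ because $\chi$ has open image in $\mathbb{Z}_p^{\times}$ and, since $v_p(q)>0$, the cocycle $c$ has open image in $\mathbb{Z}_p$; as both $\chi$ and $c$ are ramified, the same computation restricted to $I_K$ yields $\mathfrak{i}=\mathfrak{g}$.

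For cases (i)--(iv), $E$ has good reduction, and I would separate the ordinary and supersingular cases by means of the connected--\'etale sequence $0\to V'\to V_p(E)\to V''\to 0$ over $\mathcal{O}_K$, on which $I_K$ acts through $\chi$ times an unramified character on $V'$ and through an unramified character on $V''$. In the ordinary case this forces $\rho_E(G_K)$ into the Borel subalgebra stabilizing $V'=\Ker(V_p(E)\to V_p(\widetilde{E}))$. If $E$ has complex multiplication, then over a finite extension $K'$ of $K$ the representation $\rho_E$ is a sum of two characters, one of them unramified at $p$ and the product of the two being $\chi$ up to finite order; hence they are multiplicatively independent, so $\mathfrak{g}$ is exactly the $2$-dimensional split Cartan they generate, and since only the ramified character contributes on inertia, $\mathfrak{i}$ is the $1$-dimensional line it spans. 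If $E$ has no complex multiplication, Serre's results give that $\rho_E(G_K)$ is open in the full Borel, so $\dim\mathfrak{g}=3$; on the other hand $I_K$ acts trivially on $V''$, and Serre's results also show that its image is open in the subgroup of matrices with lower-right entry $1$, so $\dim\mathfrak{i}=2$, and the Lie bracket of two such matrices lands in the off-diagonal line, whence $\mathfrak{i}/[\mathfrak{i},\mathfrak{i}]$ is $1$-dimensional.

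In the supersingular case the formal group $\widehat{E}$ has height $2$. If $\widehat{E}$ has formal complex multiplication by an order in a quadratic extension $F/\mathbb{Q}_p$, then $\rho_E$ commutes with the action of $F^{\times}$ on $V_p(E)$, so $\mathfrak{g}$ is contained in the centralizer of $F$ in $\End(V_p(E))$, which is $F$ itself, a Cartan subalgebra; it is non-split because $F$ is a field (every quadratic subfield of the quaternion division algebra over $\mathbb{Q}_p$ is a field). Lubin--Tate theory shows that $\rho_E(I_K)$ is already open in the corresponding torus, so $\mathfrak{g}=\mathfrak{i}$ is $2$-dimensional. If $\widehat{E}$ has no formal complex multiplication, Serre's results give that $\rho_E(G_K)$ is open in $\Aut(T_p(E))=\GL_2(\mathbb{Z}_p)$, so $\mathfrak{g}=\End(V_p(E))$ has dimension $4$, and since in this case $\rho_E(I_K)$ is already open in $\GL_2(\mathbb{Z}_p)$ we get $\mathfrak{i}=\mathfrak{g}$.

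The hard part is that cases (ii) and (iv) genuinely rely on Serre's open-image theorems --- that the image of $\rho_E$ fills out the expected algebraic subgroup of $\GL_2$ --- and cases (i) and (iii) on the fact that the inertia image is open in the relevant Cartan, i.e.\ on the Lubin--Tate and local class field theory description of the wildly ramified part. Once these inputs are granted, the remainder is the routine identification of the containing algebraic subgroups from the constraints above together with a couple of explicit $2\times 2$ Lie-bracket computations.
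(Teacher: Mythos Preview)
The paper does not give its own proof of this proposition: it is stated as a well-known fact with the citation ``(cf.\ \cite{Ser1}, Appendix of Chapter IV)'' and nothing more. Your case-by-case outline is precisely the standard argument underlying Serre's treatment---Tate uniformization for (v), the connected--\'etale sequence together with Serre's open-image results for the ordinary cases (iii)--(iv), and the formal-group/Lubin--Tate analysis plus Serre's open-image theorem for the supersingular cases (i)--(ii)---so there is nothing to compare beyond noting that you have unpacked what the paper leaves as a black-box citation. One cosmetic remark: in case (i), the reason the Cartan is non-split is simply that the formal CM ring tensored with $\mathbb{Q}_p$ is a \emph{field} (any rank-$2$ $\mathbb{Z}_p$-subalgebra of the maximal order of the quaternion division algebra over $\mathbb{Q}_p$ is a domain), which is what you say, though the parenthetical phrasing is slightly awkward.
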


Denote the ring of integers of $K$ by $\mathcal{O}_K$. 
In the proposition above, an elliptic curve $E$ over $K$ 
with good supersingular reduction is said to have 
\emph{formal complex multiplication over $K$} if 
the endomorphism ring of the $p$-divisible group 
$\mathcal{E}(p)$ associated with the N{\'e}ron model 
$\mathcal{E}$ of $E$ over $\mathcal{O}_K$ is a 
$\mathbb{Z}_p$-module of rank $2$. 
We simply say $E$ has \emph{formal complex multiplication} 
if $E \times_K K'$ has formal complex multiplication for 
some algebraic extension $K'$ of $K$. 
Then the quadratic field 
$\text{End}_{\mathcal{O}_{K'}} (\mathcal{E}(p)) \otimes_{\mathbb{Z}_p} \mathbb{Q}_p$ 
is called the \emph{formal complex multiplication field} of $E$. 
We can take for $K'$ a finite extension of $K$ of degree at most $2$.
The subspace $W$ in \emph{(v)} of the proposition 
is isomorphic to the twist of $\Q_p(1)$ by an unramified character
of order at most $2$. 

\section{Some Criteria for the Vanishing of $J_V$-cohomology} \label{sec: vanishing} 

Let $X$ be a proper smooth variety over $K$ 
with potential good reduction. Let $i$ be a positive odd integer and put 
$V = H^{i}_{\text{\'et}} (X_{\overline{K}}, \mathbb{Q}_p)$. Let $\rho$
be the continuous homomorphism attached to $V$ as in the Introduction
and let $G_V = \rho(G_K)$ and $H_V = \rho (G_{K(\mu_{p^\infty})})$.
In this section, we prove Theorem \ref{main}
given in the Introduction. 
Before embarking on the proofs, we give a few remarks.

Recall from \S\ref{setup} that we have the isomorhisms $G_V \simeq \Gal(K(V)/K)$ and 
$H_V \simeq \Gal(K(V)/K_{\infty,V})$.
If $L/K$ is a Galois extension of $K$,
then we identify $J_V := \rho(G_L) \simeq \Gal(K(V)/M)$ 
where $M = K(V) \cap L$. 

\begin{lemma}\label{lemma6} 
Let $V$, $G_V$, $H_V$ and $J_V$ be as above. 
\\
(1) If $J_V$ has finite index in $G_V$, 
then $V$ has vanishing $J_V$-cohomology. \\
(2) If $L$ contains $K(\mu_{p^\infty})$ and 
$J_V$ has finite index in $H_V$,
then $V$ has vanishing $J_V$-cohomology.
\end{lemma}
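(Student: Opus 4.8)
The plan is to reduce the statement to Theorem~\ref{thm0} by a standard restriction–corestriction argument. The key point is that Theorem~\ref{thm0} gives vanishing $G_V$-cohomology and vanishing $H_V$-cohomology for $V = H^i_{\text{\'et}}(X_{\overline{K}},\Q_p)$ with $X$ of potential good reduction and $i$ a positive odd integer, and that for a finite-index closed subgroup $J_V$ of a compact $p$-adic Lie group $G_V$ (respectively $H_V$), the composite of restriction and corestriction $H^n(G_V,V)\to H^n(J_V,V)\to H^n(G_V,V)$ is multiplication by the index $[G_V:J_V]$. Since $V$ is a $\Q_p$-vector space, multiplication by the nonzero integer $[G_V:J_V]$ is an isomorphism, so the restriction map $H^n(G_V,V)\to H^n(J_V,V)$ is injective for all $n$; combined with the surjectivity of corestriction onto $[G_V:J_V]\cdot H^n(G_V,V)=H^n(G_V,V)$, one concludes $H^n(J_V,V)=0$ for all $n\geq 0$ once $H^n(G_V,V)=0$.

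For part (1): $J_V=\rho(G_L)$ has finite index in $G_V=\rho(G_K)$ by hypothesis, and $H^n(G_V,V)=0$ for all $n$ by Theorem~\ref{thm0}. Applying the restriction–corestriction argument above to the pair $J_V\leq G_V$ yields $H^n(J_V,V)=0$ for all $n\geq 0$, i.e.\ $V$ has vanishing $J_V$-cohomology. For part (2): when $L\supseteq K(\mu_{p^\infty})$ we have $J_V\leq H_V$ as noted in \S\ref{setup}, and $J_V$ has finite index in $H_V$ by hypothesis; Theorem~\ref{thm0} also gives $H^n(H_V,V)=0$ for all $n$, so the same argument applied to the pair $J_V\leq H_V$ gives $H^n(J_V,V)=0$ for all $n\geq 0$.

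I should be a little careful about two technical points. First, for the restriction–corestriction formalism to apply I want $J_V$ to be \emph{open} in $G_V$ (resp.\ $H_V$), not merely of finite index; but $G_V$ and $H_V$ are compact $p$-adic Lie groups, hence profinite, and a finite-index closed subgroup of a profinite group is automatically open, so there is no issue. Second, the corestriction (transfer) map for continuous cohomology of profinite groups with coefficients in a topological module, and the identity $\mathrm{cor}\circ\mathrm{res}=[G:J]$, are standard; I would simply cite the relevant reference (e.g.\ Neukirch–Schmidt–Wingberg or Serre's \emph{Galois Cohomology}). I do not expect any genuine obstacle here: the lemma is essentially the observation recorded later in the paper that vanishing $G_V$-cohomology is equivalent to vanishing $G$-cohomology for an open subgroup $G$, specialized to the case at hand, and the only thing to write down is the restriction–corestriction sandwich together with the remark that multiplication by a nonzero integer is invertible on a $\Q_p$-vector space.
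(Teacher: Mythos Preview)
Your restriction--corestriction argument has the implication backwards. From $\mathrm{cor}\circ\mathrm{res}=[G_V:J_V]$ being invertible on the $\Q_p$-vector space $H^n(G_V,V)$ you correctly deduce that $\mathrm{res}$ is split injective and $\mathrm{cor}$ is surjective; but this exhibits $H^n(G_V,V)$ as a direct summand of $H^n(J_V,V)$, so the conclusion you may draw is that $H^n(J_V,V)=0$ implies $H^n(G_V,V)=0$, not the converse. Knowing that $0$ injects into $H^n(J_V,V)$ and that $H^n(J_V,V)$ surjects onto $0$ tells you nothing about $H^n(J_V,V)$. (Equivalently, since $L/K$ is Galois one has $J_V\trianglelefteq G_V$, and Hochschild--Serre with finite quotient gives $H^n(G_V,V)\simeq H^n(J_V,V)^{G_V/J_V}$; but the vanishing of the invariants of a finite group acting on a $\Q_p$-vector space does not force the space to vanish.)

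The paper's proof avoids this and uses the specific nature of Theorem~\ref{thm0}: one replaces $K$ by the finite extension $M=K(V)\cap L$, after which the new $G_V$ (resp.\ $H_V$) coincides with the old $J_V$, and Theorem~\ref{thm0} applied over $M$ gives the vanishing directly. The underlying reason this works---and the reason the ``if and only if'' remark you allude to from the Introduction is true---is that the proof of Theorem~\ref{thm0} in \cite{CSW} proceeds via Serre's criterion on the Lie algebra together with Lazard's comparison of continuous group cohomology with Lie algebra cohomology; an open subgroup has the same Lie algebra, so the vanishing automatically passes to it. That Lie-algebraic input (or, equivalently, the freedom to reapply Theorem~\ref{thm0} over a finite extension of $K$) is precisely what is missing from a bare transfer argument.
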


\begin{remark}\label{rem1}
\normalfont
By Galois theory, we have 
$\Gal(M/K) \simeq G_V/J_V$ 
(resp.\ $\Gal(M/K_{\infty,V}) \simeq H_V/J_V$) 
in the discussion above.
So the condition that $J_V$ has finite index in 
$G_V$ (resp.\ $H_V$) is equivalent to the finiteness 
of the degree of the extension $M$ over $K$ 
(resp.\ $K_{\infty,V}$).
\end{remark}

\begin{proof}
Replacing $K$ with a finite extension, we may assume
$G_V = J_V$ (resp.\ $H_V = J_V$). 
It follows immediately from Theorem \ref{thm0} 
that $V$ has vanishing $J_V$-cohomology.
\end{proof}

\begin{remark} \label{rem2}
\normalfont
Let $V$ be any $p$-adic representation of $G_K$ as above and 
$L/K$ a Galois extension containing 
$K(\mu_{p^\infty})$. Take a $G_K$-stable 
$\mathbb{Z}_p$-lattice $T$ of $V$. 
It is known (cf.\ e.g.\ \cite{KT}, Lemma 2.1) that 
the vanishing of $H^0(J_V, V)$, with $J_V = \rho(G_L)$,
is equivalent to the finiteness of $(V/T)^{G_L}$.
Hence, since $V$ has vanishing $H_V$-cohomology,
we have the relation 
$(1)\Rightarrow(2)\Rightarrow(3)$
between the following statements:
\\
(1) 
$M$	is a finite extension
of $K(\mu_{p^\infty})$,
\\
(2) 
$V$ has vanishing $J_V$-cohomology, and
\\
(3) 
$(V/T)^{G_L}$ is a finite group.
\\
However, converses may not necessarily hold.
In some cases though, we have $(3)\Rightarrow(1)$, 
as we shall see in Corollary \ref{cor_ss}.
\end{remark}

To prove Theorem \ref{main} given in the Introduction,
we need the following lemma.

\begin{lemma}\label{lemma1} 
Let $X$ be a proper smooth variety over a $p$-adic field 
$K$ with potential good reduction and $i$ be 
a positive integer. Consider the representation
$(\rho, V)$, where 
$V = H^i_{\text{\rm{\'et}}}(X_{\bar{K}}, \mathbb{Q}_p)$ and
let $\det \rho: G_K \rightarrow \mathbb{Z}_p^{\times}$
be the character obtained by 
composing $\rho$ with the determinant map.
Then $\det \rho = \chi^{-\frac{id}{2}}$ on an open
subgroup of $G_K$, where $d = \dim_{\Q_p} V$
and $\chi$ is the $p$-adic cyclotomic character.
\end{lemma}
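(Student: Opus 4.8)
The plan is to compute $\det\rho$ by passing to a finite extension where $X$ has good reduction, and there invoking the comparison between $p$-adic étale cohomology and crystalline cohomology to pin down the Hodge--Tate weights, and hence the determinant character, on inertia. First I would replace $K$ by a finite extension $K'/K$ so that $X$ acquires good reduction over $K'$; since $\det\rho|_{G_{K'}}$ is a power of $\chi$ if and only if $\det\rho$ is a power of $\chi$ on an open subgroup of $G_K$, and since $\chi|_{G_{K'}}$ is still the cyclotomic character, this reduction is harmless. So assume $X$ has a smooth proper model over $\mathcal{O}_K$.

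Next, since $V = H^i_{\text{\'et}}(X_{\bar K},\mathbb{Q}_p)$ is now a crystalline representation of $G_K$, its Hodge--Tate weights are the jumps of the Hodge filtration on $H^i_{\mathrm{dR}}(X/K)$, namely the integers $j$ with $0 \le j \le i$ appearing with multiplicity $h^{j,i-j} = \dim H^{i-j}(X,\Omega^j_{X/K})$. A crystalline (indeed merely Hodge--Tate) character has the form $\chi^{w}$ times a finite-order character, where $w$ is its unique Hodge--Tate weight; applied to the determinant, $\det\rho$ equals $\chi^{-\sum_j j\,h^{j,i-j}}$ times a finite-order character. (I should fix the sign convention for Hodge--Tate weights so that $\mathbb{Q}_p(1)$ has weight $-1$, matching the statement's exponent.) A finite-order character of $G_K$ is trivial on an open subgroup, so it remains only to evaluate the exponent $\sum_{j=0}^{i} j\, h^{j,i-j}$.

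The key numerical input is Hodge symmetry: for a proper smooth variety over a field of characteristic $0$ one has $h^{j,i-j} = h^{i-j,j}$. Pairing off the term $j\,h^{j,i-j}$ with $(i-j)\,h^{i-j,j}$ gives $j\,h^{j,i-j} + (i-j)\,h^{i-j,j} = i\,h^{j,i-j}$ for each complementary pair $\{j,i-j\}$ with $j \ne i-j$; when $i$ is odd there is no middle term, so summing over all $j$ and dividing by $2$ yields $\sum_j j\,h^{j,i-j} = \tfrac{i}{2}\sum_j h^{j,i-j} = \tfrac{i}{2}\dim_{\mathbb{Q}_p} V = \tfrac{id}{2}$. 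Hence $\det\rho = \chi^{-id/2}$ on an open subgroup of $G_K$, as claimed. (Note the hypothesis that $i$ is odd guarantees $id/2$ is the right exponent with no parity subtlety: since $d=\sum h^{j,i-j}$ is even when $i$ is odd, $id/2$ is an integer.)

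The main obstacle is simply marshalling the right form of the comparison theorem: I need that étale cohomology of a smooth proper $\mathcal{O}_K$-scheme is crystalline with Hodge--Tate weights given by the Hodge numbers, with the sign convention consistent with the rest of the paper, and that a Hodge--Tate character is a power of $\chi$ up to finite order. Everything after that — Hodge symmetry and the pairing argument — is elementary. One could alternatively avoid even naming "crystalline" and argue directly with Hodge--Tate decompositions (the representation is Hodge--Tate after a finite extension by good reduction), extracting the determinant's Hodge--Tate weight from $\bigwedge^d V$; the bookkeeping is identical.
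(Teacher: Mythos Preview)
Your computation of the Hodge--Tate weight of $\det\rho$ via Hodge symmetry is correct and gives the exponent $-id/2$ cleanly. However, the sentence ``A crystalline (indeed merely Hodge--Tate) character has the form $\chi^{w}$ times a finite-order character'' is false, and this is where the argument breaks. A Hodge--Tate (or de Rham, or crystalline) character of $G_K$ with Hodge--Tate weight $0$ is only \emph{potentially unramified}: its restriction to inertia has finite image, but the character itself can have infinite order. A concrete counterexample is any unramified character sending Frobenius to a non-torsion element of $\mathbb{Z}_p^{\times}$. Thus your argument only shows $\det\rho = \chi^{-id/2}\cdot\eta$ with $\eta$ potentially unramified, and you have not established that $\eta$ is trivial on an open subgroup of $G_K$.

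The paper closes exactly this gap with an input you have not used: purity of Frobenius eigenvalues. After passing to the crystalline situation, one looks at $D_{\mathrm{cris}}(V)$ and uses that the eigenvalues of the linearised Frobenius $\Phi=\varphi^{f}$ are Weil numbers of weight $i$ with rational characteristic polynomial; hence their product $\delta$ is a rational number of archimedean absolute value $q^{id/2}$, forcing $\delta=\pm q^{id/2}$. This pins down the value of $\eta$ on Frobenius as $\pm 1$, so $\eta$ has order at most two (the paper cites \cite{CSW}, Lemma~3.4 for this last step). Your Hodge-theoretic route recovers the inertial part elegantly, but to finish you must supplement it with this Frobenius/purity argument; the Hodge filtration alone carries no information about the unramified quotient.
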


\begin{proof}
Replacing $K$ by a finite extension, 
we may assume $V$ is crystalline. 
Consider the filtered $\varphi$-module 
$D_{\mathrm{cris}}(V)$ and let $\Phi = \varphi^f$, 
where $q=p^f$ is the cardinality
of the residue field of $\mathcal{O}_K$.
Let $\delta$ denote the determinant 
of the endomorphism $\Phi$.
By \cite{C-LS}, the characteristic polynomial
of $\Phi$ has rational coefficients and 
its roots are Weil numbers of weight $i$.  
Thus, in particular, $\delta$ is a rational number
and it has archimedean absolute value
equal to $q^{t}$ where $t=id/2$. 
Hence $\delta = \pm q^t$.
Since $\det \rho$ is crystalline,           
the restriction of the 
character $\det \rho$ to $I_K$ is
equal to $\chi^{-t}$ (cf.\ \cite{Fon}, Proposition 5.4.1).
Thus, $\det \rho = \eta \chi^{-t}$
with $\eta$ an unramified character.
By Lemma 3.4 in \cite{CSW},
the character $\eta$ has order at most two,
from which the desired result follows.
Note that the Betti number $d$ is even if $i$ 
is odd, by the Hodge symmetry.
\end{proof}

\begin{proof}[(Proof of Theorem \ref{main})]   
The theorem clearly holds if $d = \dim_{\mathbb{Q}_p} V$ is zero.
We assume henceforth that $V$ is of positive dimension.
Since the kernel of $\rho$ is contained in the 
kernel of $\det \rho$, we see that $K(V)$ contains the fixed subfield 
$K(\det V)$ of $\overline{K}$ by the kernel of $\det \rho$.
Note that the character $\det \rho$ is the $-id/2$-th power of the
$p$-adic cyclotomic character on an open subgroup of 
$G_K$, by Lemma \ref{lemma1}. Hence the field 
$K(V)$ contains a subfield $F$ of $K(\mu_{p^\infty})$ such that
$K(\mu_{p^\infty})$ is of finite degree over $F$ since $d > 0$. 
Replacing $K$ by a finite extension, we may then assume that 
$K(V)$ and $L$ contains $K(\mu_{p^\infty})$. 
Put $\mathfrak{h} = \Lie(\Gal(K(V)/K(\mu_{p^\infty})))$ and 
$\mathfrak{h}' = \Lie(\Gal(L/K(\mu_{p^\infty})))$. 
Recall that $M$ is the intersection of the fields 
$K(V)$ and $L$, which is a Galois extension of 
$K(\mu_{p^\infty})$.
Let $\mathfrak{j}$ and $\mathfrak{j}'$ be the Lie algebras 
of $\Gal(K(V)/M)$ and $\Gal(L/M)$ respectively. 
The Lie algebra $\mathfrak{j}$ (resp. $\mathfrak{j}'$) is
an ideal of $\mathfrak{h}$ (resp. $\mathfrak{h}'$), 
since $\Gal(K(V)/M)$ (resp. $\Gal(L/M)$) is a 
closed normal subgroup of
$\Gal(K(V)/K(\mu_{p^\infty}))$ 
(resp. $\Gal(L/K(\mu_{p^\infty}))$).
We have
\[ \frac{\mathfrak{h}}{\mathfrak{j}} \simeq 
\Lie\left( \frac{\Gal(K(V)/K(\mu_{p^\infty}))}{\Gal(K(V)/M)} \right) \simeq 
\Lie(\Gal(M/K(\mu_{p^\infty}))) \simeq 
\Lie\left( \frac{\Gal(L/K(\mu_{p^\infty}))}{\Gal(L/M)} \right) \simeq 
\frac{\mathfrak{h}'}{\mathfrak{j}'}. \]
The above expressions are all equal to zero by hypothesis.
Therefore $\Gal(K(V)/M)$ has finite index in $\Gal(K(V)/K(\mu_{p^\infty}))$.
We then apply Lemma \ref{lemma6} to obtain the desired conclusion.
\end{proof}

It seems worthwhile to state the following corollaries 
for cohomological coprimality in
the case where $L$ is given by another ``geometric" representation. 
More precisely, consider another proper smooth variety $Y$ over $K$ 
with potential good reduction. Let $j$ be a positive odd integer and put 
$V_1 = V$, as above and 
$V_2 = H^{j}_{\text{\'et}} (Y_{\overline{K}},\mathbb{Q}_p)$.
Put $J_1 = \rho_1(\Ker(\rho_2))$ and $J_2 = \rho_2(\Ker(\rho_1))$. 
Note that $J_r$ is a closed normal subgroup 
of $H_r = \rho_r (G_{K(\mu_{p^\infty})})$
(after a finite extension) for $r=1,2$. 
We have the following special case of Lemma \ref{lemma6}.

\begin{cor}\label{cor1} 
Let $V_1$ and $V_2$ be as above and let
$K(V_1)$ and $K(V_2)$ be the fixed fields
of $\Ker(\rho_1)$ and $\Ker(\rho_2)$, respectively.
If $M := K(V_1) \cap K(V_2)$ 
is a finite extension of $M \cap K(\mu_{p^\infty})$, 
then $V_1$ and $V_2$ are cohomologically coprime. 
\end{cor}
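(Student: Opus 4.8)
The plan is to obtain this from Lemma~\ref{lemma6}(2), applied first to the pair $(V_1,\,L=K(V_2))$ and then to $(V_2,\,L=K(V_1))$. If $V_1=0$ the claim is immediate: then $J_1=\rho_1(\Ker\rho_2)$ acts on the zero space, so $V_1$ trivially has vanishing $J_1$-cohomology, while $J_2=\rho_2(\Ker\rho_1)=\rho_2(G_K)=G_{V_2}$, so $V_2$ has vanishing $J_2$-cohomology by Theorem~\ref{thm0}; the case $V_2=0$ is symmetric. Hence we may assume $d_r:=\dim_{\Q_p}V_r>0$ for $r=1,2$.

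First I would reduce to the situation $K(\mu_{p^\infty})\subseteq K(V_1)\cap K(V_2)$. By Lemma~\ref{lemma1}, on suitable open subgroups of $G_K$ the character $\det\rho_1$ equals $\chi^{-id_1/2}$ and $\det\rho_2$ equals $\chi^{-jd_2/2}$; since $i$ and $j$ are odd the Betti numbers $d_r$ are even, so the exponents $id_1/2$ and $jd_2/2$ are nonzero integers and these characters have open image in $\Z_p^{\times}$. As $\ker\rho_r\subseteq\ker(\det\rho_r)$, the field $K(V_r)$ therefore contains a subfield of $K(\mu_{p^\infty})$ over which $K(\mu_{p^\infty})$ is finite. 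Replacing $K$ by a finite extension we may thus assume $K(\mu_{p^\infty})\subseteq K(V_1)$ and $K(\mu_{p^\infty})\subseteq K(V_2)$.

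After this reduction $M=K(V_1)\cap K(V_2)$ contains $K(\mu_{p^\infty})$, so $M\cap K(\mu_{p^\infty})=K(\mu_{p^\infty})=K_{\infty,V_1}=K_{\infty,V_2}$ and the hypothesis of the corollary reads $[M:K(\mu_{p^\infty})]<\infty$. Now I apply Lemma~\ref{lemma6}(2) with $V=V_1$ and $L=K(V_2)$: the field $L$ contains $K(\mu_{p^\infty})$, and the fixed field of $J_1=\rho_1(G_{K(V_2)})$ inside $K(V_1)$ is $M$, which is finite over $K_{\infty,V_1}=K(\mu_{p^\infty})$; by Remark~\ref{rem1} this says exactly that $J_1$ has finite index in $H_1$, whence $V_1$ has vanishing $J_1$-cohomology. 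Interchanging the roles of $V_1$ and $V_2$ gives in the same way that $V_2$ has vanishing $J_2$-cohomology, and therefore $V_1$ and $V_2$ are cohomologically coprime by Definition~\ref{defn_cohom_coprime}.

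The step I expect to require the most care is checking that the reduction ``replacing $K$ by a finite extension'' does not affect the statement. For the conclusion this is standard: enlarging $K$ to $K'$ replaces $\Ker\rho_r$ by the open subgroup $G_{K'}\cap\Ker\rho_r$, hence $J_1$ and $J_2$ by finite-index subgroups with the same Lie algebras, and the vanishing of $J_r$-cohomology of $V_r$ depends only on $\Lie(J_r)$ through the Lazard–Serre computation recalled in the introduction. For the hypothesis the same principle applies but is not subsumed by the blanket remark of the introduction: one has $M\cap K(\mu_{p^\infty})=(K(V_1)\cap K(\mu_{p^\infty}))\cap(K(V_2)\cap K(\mu_{p^\infty}))$, which is of finite index in $K(\mu_{p^\infty})$ by the determinant argument of the second paragraph, and a short computation with the Lie algebras $\Lie(\Gal(K(V_r)/M))\subseteq\Lie(\Gal(K(V_r)/K(V_r)\cap K(\mu_{p^\infty})))$ shows that the finiteness of $[M:M\cap K(\mu_{p^\infty})]$ is unchanged when $K$ is enlarged (finite-index closed subgroups of a $p$-adic Lie group share the same Lie algebra). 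Once this is in place, the corollary is a direct application of Lemmas~\ref{lemma1} and~\ref{lemma6}.
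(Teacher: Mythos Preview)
Your proposal is correct and follows exactly the route the paper intends: the paper states Corollary~\ref{cor1} as ``the following special case of Lemma~\ref{lemma6}'' without giving an explicit proof, and you supply precisely the expected details---the reduction to $K(\mu_{p^\infty})\subseteq K(V_r)$ via Lemma~\ref{lemma1} (mirroring the proof of Theorem~\ref{main}) followed by a symmetric application of Lemma~\ref{lemma6}(2). Your extra care in the final paragraph about why the hypothesis survives the base change is a welcome addition that the paper leaves implicit.
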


The cohomological coprimality can also be derived by 
comparing the Lie algebras $\mathfrak{h}_1 = \Lie(H_1)$ and 
$\mathfrak{h}_2 = \Lie(H_2)$.

\begin{cor}\label{prop2} 
With the assumptions and notations in the discussion above,
suppose $\mathfrak{h}_1$ and $\mathfrak{h}_2$ 
have no common simple factor. Then $V_1$ and $V_2$
are cohomologically coprime.
\end{cor}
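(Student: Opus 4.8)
The plan is to reduce Corollary \ref{prop2} directly to Theorem \ref{main}, applied once in each direction. Writing $\rho_1,\rho_2$ for the homomorphisms giving the $G_K$-actions on $V_1,V_2$, I unwind Definition \ref{defn_cohom_coprime}: I need to show that $V_1$ has vanishing $J_1$-cohomology, where $J_1=\rho_1(\Ker\rho_2)=\rho_1(G_{K(V_2)})$, and symmetrically that $V_2$ has vanishing $J_2$-cohomology, where $J_2=\rho_2(G_{K(V_1)})$. Since the hypothesis that $\mathfrak h_1$ and $\mathfrak h_2$ have no common simple factor is symmetric in the two indices, it is enough to treat the first statement; the second follows by interchanging the roles of $X$ and $Y$. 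I would first dispose of the degenerate cases: if $V_2=0$ then $J_1=\rho_1(G_K)=G_{V_1}$ and the assertion is part of Theorem \ref{thm0}, while if $V_1=0$ there is nothing to prove. So I may assume $\dim_{\Q_p}V_1$ and $d_2=\dim_{\Q_p}V_2$ are both positive.

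Next I would apply Theorem \ref{main} with $V=V_1$ and $L=K(V_2)$. Here $K(V_2)/K$ is a $p$-adic Lie extension (being cut out by a $p$-adic representation of $G_K$), and $J_V=\rho_1(G_{K(V_2)})=J_1$, so the conclusion of Theorem \ref{main} is exactly what I want. The hypothesis of Theorem \ref{main} that requires checking is that $K(\mu_{p^\infty})$ has finite degree over $K_{\infty,L}=K(V_2)\cap K(\mu_{p^\infty})$. This I would obtain from Lemma \ref{lemma1}: since $\Ker\rho_2\subseteq\Ker(\det\rho_2)$, the field $K(V_2)$ contains the field cut out by $\det\rho_2$, and Lemma \ref{lemma1} tells us that $\det\rho_2$ coincides with $\chi^{-jd_2/2}$ on an open subgroup of $G_K$; as $d_2>0$, it follows that $K(V_2)$ contains a subfield $F\subseteq K(\mu_{p^\infty})$ with $[K(\mu_{p^\infty}):F]<\infty$, whence $[K(\mu_{p^\infty}):K_{\infty,L}]<\infty$. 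The same argument with $V_1$ in place of $V_2$, using $\dim_{\Q_p}V_1>0$, covers the symmetric case.

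Then I would match the Lie-algebra hypotheses. By the identifications recorded in \S\ref{setup}, one has $\Gal(K(V_1)/K_{\infty,V_1})\simeq H_1$ and $\Gal(L/K_{\infty,L})=\Gal(K(V_2)/(K(V_2)\cap K(\mu_{p^\infty})))\simeq H_2$, so that $\Lie(\Gal(K(V_1)/K_{\infty,V_1}))=\mathfrak h_1$ and $\Lie(\Gal(L/K_{\infty,L}))=\mathfrak h_2$; by hypothesis these have no common simple factor. Theorem \ref{main} then gives that $V_1$ has vanishing $J_1$-cohomology, and running the same argument with $V_1$ and $V_2$ exchanged gives the vanishing of the $J_2$-cohomology of $V_2$. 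By Definition \ref{defn_cohom_coprime}, $V_1$ and $V_2$ are cohomologically coprime.

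I do not expect a genuine obstacle here, since all the real work already sits in Theorem \ref{main}; the corollary is essentially a dictionary between the situation where $L$ is cut out by another geometric representation and the hypotheses of that theorem. The two points that need a word of justification are the verification that $K(\mu_{p^\infty})$ has finite degree over $K(V_2)\cap K(\mu_{p^\infty})$ (where the cyclotomic nature of $\det\rho_2$ from Lemma \ref{lemma1}, together with the positivity of $\dim_{\Q_p}V_2$, is used) and the separate treatment of the degenerate cases $V_1=0$ or $V_2=0$ via Theorem \ref{thm0}.
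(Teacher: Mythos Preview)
Your proposal is correct and takes essentially the same approach as the paper: apply Theorem~\ref{main} once with $V=V_1$, $L=K(V_2)$, and once with $V=V_2$, $L=K(V_1)$. Your explicit verification (via Lemma~\ref{lemma1} and the positivity of $\dim V_2$) that $[K(\mu_{p^\infty}):K_{\infty,L}]<\infty$ is exactly what the paper absorbs into the remark preceding Corollary~\ref{cor1} that $J_r\subseteq H_r$ ``after a finite extension''; otherwise the arguments coincide.
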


\begin{proof}
Apply Theorem \ref{main} with $V = V_1$ 
and $L = K(V_2)$;   
and with $V = V_2$ and $L = K(V_1)$.
\end{proof}

\section{The ordinary case}\label{sec:ord}

We use the notation as in the previous 
sections. In this section, we look at the 
vanishing of cohomology groups for $p$-adic 
Galois representations given by varieties
with good ordinary reduction. 
We begin with a definition and a few remarks.

\begin{defn}\label{prime-to-p}
\normalfont
Let $F$ be a field. For an algebraic extension $F'$ 
of $F$, we say that $F'$ is a \emph{prime-to-$p$ extension}
of $F$ if $F'$ is a union of finite extensions over $F$
of degree prime-to-$p$. If $F'$ is a prime-to-$p$ extension
over some finite extension field of $F$, we say that
$F'$ is a \emph{potential prime-to-$p$ extension} of $F$.
\end{defn}

\begin{remark}\label{prime-to-p-remark}
\normalfont
(i) 
Clearly, if $F'$ is a potential prime-to-$p$
extension of $F$, then every intermediate field $F''$
(with $F \subseteq F'' \subseteq F'$) is a potential
prime-to-$p$ extension of $F$. \\
(ii)
Let $L$ be a $p$-adic Lie extension of $K$ containing 
$K(\mu_{p^\infty})$.  
Then the residue field $k_L$ is a potential 
prime-to-$p$ extension over $k$ if and only 
if $k_L/k$ is a finite extension.
\end{remark}

We now give the first main result in this section.
We consider the case given by elliptic
curves. 

\begin{thm}\label{thm2A}
Let $E/K$ be an elliptic curve with potential good 
ordinary reduction over $K$. 
Let $L$ be a Galois extension of $K$ 
whose residue field $k_L$ is a potential 
prime-to-$p$ extension over $k$. 
Put $V =V_p(E)$ and $J_V = \rho_E(G_L)$. 
Then $V$ has vanishing $J_V$-cohomology.
\end{thm}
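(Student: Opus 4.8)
The idea is to reduce to the situation covered by Theorem~\ref{thm0} by controlling the intersection $M = K(E_\infty) \cap L$ over $K_{\infty,V} = K(\mu_{p^\infty})$. After replacing $K$ by a finite extension we may assume $E$ has good ordinary reduction over $K$ and that $K(E_\infty) \supseteq K(\mu_{p^\infty})$. By Remark~\ref{prime-to-p-remark}(ii), the hypothesis that $k_L$ is a potential prime-to-$p$ extension of $k$ forces $k_L/k$ to be finite once $L$ is a $p$-adic Lie extension; so after a further finite base change I can take $k_L = k$, i.e.\ $L/K$ is \emph{totally ramified} (this is where one checks $L$ is in fact a $p$-adic Lie extension, which follows once we intersect with $K(E_\infty)$ since $\Gal(M/K)$ is a quotient of the $p$-adic Lie group $G_V$). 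The plan is then to show $M$ is finite over $K(\mu_{p^\infty})$ and invoke Lemma~\ref{lemma6}(2).

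\textbf{Key step: controlling the inertia action.} Since $E$ has good ordinary reduction, $V = V_p(E)$ is an ordinary representation (Proposition~\ref{prop3}(iii),(iv) / Definition~\ref{ordrepn}): there is a $G_K$-stable line $\Fil^1 V$ on which $I_K$ acts by $\chi$ and with $V/\Fil^1 V$ on which $I_K$ acts trivially. Write $H = \Gal(K(E_\infty)/K(\mu_{p^\infty})) = H_V$ and let $\overline{H} = \Gal(M/K(\mu_{p^\infty})) = H_V/J_V$ be the quotient we want to show is finite. The point is that $M \subseteq L$ has residue field equal to $k$, so $M/K$ is totally ramified, hence $\Gal(M/K_{\infty,V})$ is a quotient of an \emph{inertia} group; thus its Lie algebra is a quotient of $\mathfrak{i} = \Lie(\rho_E(I_K))$ intersected appropriately with $\mathfrak{h} = \Lie(H)$. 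In the CM case $\mathfrak{h}$ is $1$-dimensional and $\mathfrak{i} \subsetneq \mathfrak{g}$ already has the cyclotomic direction split off, and in the non-CM case $\mathfrak{h}$ is $2$-dimensional (the image of inertia in the Borel modulo the cyclotomic part) with $\mathfrak{i}/[\mathfrak{i},\mathfrak{i}]$ one-dimensional. The claim to extract is that the ``totally ramified'' condition kills all of $\mathfrak{h}$: any totally ramified $p$-adic Lie quotient of $H_V$ whose inertia coincides with its whole Galois group, sitting inside the ordinary picture where the graded pieces of inertia are powers of $\chi$, must be trivial once $K(\mu_{p^\infty})$ is already absorbed. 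Concretely I would argue that $\overline{H}$ acts on $\Fil^1 V$ and on $V/\Fil^1 V$ through finite quotients (the inertia acts by $\chi$ resp.\ trivially, but over $K(\mu_{p^\infty})$ the character $\chi$ becomes trivial on $H$, so the graded action of $\overline H$ is through a finite group), and then the extension class is controlled: an infinite such $\overline{H}$ would produce an unramified infinite subextension, contradicting $k_L = k$.

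\textbf{Finishing.} Once $\dim \mathfrak{h}/\mathfrak{j} = 0$, i.e.\ $[M:K(\mu_{p^\infty})] < \infty$, Remark~\ref{rem1} gives that $J_V$ has finite index in $H_V$, and Lemma~\ref{lemma6}(2) yields vanishing $J_V$-cohomology. To dispatch the case where $L/K$ is not a priori a $p$-adic Lie extension: we never need $L$ itself, only $M = K(E_\infty)\cap L$, and $\Gal(M/K)$ is automatically a $p$-adic Lie group as a quotient of $G_V$; the residue field of $M$ embeds in $k_L$, so it is a potential prime-to-$p$ (hence finite, by Remark~\ref{prime-to-p-remark}(ii)) extension of $k$. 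So the reduction in the first paragraph is legitimate with $L$ replaced by $M$ throughout.

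\textbf{Main obstacle.} The delicate point is the middle step: showing that the totally ramified hypothesis on $L$ (equivalently, on $M$) actually forces $[M:K(\mu_{p^\infty})]$ finite in the \emph{non-CM ordinary} case, where $\mathfrak{h}$ is two-dimensional. One must rule out that $M$ contains an infinite piece coming from the unipotent (upper-triangular) direction of the Borel while still being totally ramified. I expect this to come from the fact that in the ordinary non-CM case the image of $H_V$ is, up to finite index, $\left(\begin{smallmatrix}1 & * \\ 0 & 1\end{smallmatrix}\right)$-type together with the quotient torus, and the whole of it is ramified only because of the cyclotomic character on the diagonal — which is trivialized over $K(\mu_{p^\infty})$; so over $K(\mu_{p^\infty})$ any totally ramified quotient must be finite. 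Pinning down this last assertion rigorously (perhaps via the theory of the field generated by torsion of the formal group, or via a direct ramification computation on $K(E_\infty)/K(\mu_{p^\infty})$) is where the real work lies; everything else is formal manipulation with the lemmas already in place.
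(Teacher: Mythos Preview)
Your approach is \emph{different} from the paper's, and it is essentially correct --- in fact it proves the stronger statement that $J_V$ has finite index in $H_V$, not merely that the cohomology vanishes. The paper does \emph{not} establish $[M:K(\mu_{p^\infty})]<\infty$; instead it reduces to $L\supseteq K(\mu_{p^\infty})$, sets $M'=M\cap K^{\mathrm{ur}}(\mu_{p^\infty})$, and uses the explicit upper-triangular shape of $\rho_E$ (Proposition~\ref{PropA}) to compute that $\Gal(M'/K)$ acts on $\Gal(M/M')$ by the character $\varepsilon=\chi\psi^{-2}$. Lemma~\ref{lemB} then forces $H_V/J_V$ to act \emph{unipotently} on each $H^n(J_V,V)$; the vanishing follows by induction via Hochschild--Serre, with Ozeki's $H^0$-result as base case and Lemma~\ref{lemC} to pass from $H^0(H_V/J_V,H^n(J_V,V))=0$ to $H^n(J_V,V)=0$. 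This machinery is designed to generalize beyond elliptic curves (cf.\ the remark after the proof).

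Your route is shorter for elliptic curves, and your ``main obstacle'' dissolves once you argue with ideals rather than ramification heuristics. After reducing to $L\supseteq K(\mu_{p^\infty})$ (a step you should make explicit --- it is needed to invoke Lemma~\ref{lemma6}(2), and the paper does it by passing to $L(\mu_{p^\infty})$), the group $J_V$ is normal in $G_V$, so $\Lie(J_V)$ is an ideal of $\mathfrak{g}$ contained in $\mathfrak{h}$. In the non-CM ordinary case $\mathfrak{g}$ is the Borel and $\mathfrak{h}$ is the trace-zero upper-triangular plane; the only ideals of $\mathfrak{g}$ lying inside $\mathfrak{h}$ are $0$, the nilpotent line $\Q_p e_{12}$, and $\mathfrak{h}$ itself. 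In the first two cases the quotient $\mathfrak{h}/\Lie(J_V)$ surjects onto the line spanned by the image of $\psi$, so $M$ contains (up to finite index) the infinite unramified extension cut out by $\psi$, contradicting $k_M/k$ finite. Hence $\Lie(J_V)=\mathfrak{h}$ and $[M:K(\mu_{p^\infty})]<\infty$. The CM case is even easier: there $\rho_E(I_{K(\mu_{p^\infty})})$ is finite, so $K(E_\infty)/K(\mu_{p^\infty})$ is unramified up to a finite piece, and any totally ramified $M/K(\mu_{p^\infty})$ inside it is automatically finite. What each approach buys: yours is a clean two-line Lie-algebra argument specific to the elliptic-curve shape; the paper's unipotence/spectral-sequence method is more portable to higher-dimensional ordinary representations where the ideal lattice of $\mathfrak{g}$ is less tractable.
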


As a corollary, we obtain necessary and 
sufficient conditions for the vanishing of 
$J_V$-cohomology groups for 
$p$-adic representations given by an elliptic 
curve with good ordinary reduction over $K$.
Let $\tilde{E}$ denote the reduction of
$E$ modulo the maximal ideal of $\mathcal{O}_K$.

\begin{cor}\label{van-ord-abelvar}
Let $E$ be an elliptic curve over $K$ with good ordinary 
reduction and $L$ be a Galois extension with residue field $k_L$. 
Assume that $L$ contains $K(\mu_{p^\infty})$ and 
the coordinates of the $p$-torsion points of $E$. 
Put $V=V_p(E)$ and $J_V=\rho_E(G_L)$. 
Then the following statements are equivalent:\\
(1) $E(L)[p^\infty]$ is finite, \\
(2) $E^{\vee}(L)[p^\infty]$ is finite, \\
(3) $\tilde{E}(k_L)[p^\infty]$ is finite, \\
(4) $\tilde{E}^{\vee}(k_L)[p^\infty]$ is finite, \\
(5) $k_L$ is a potential prime-to-$p$ extension of $k$\\
(6) $V$ has vanishing $J_V$-cohomology.
\end{cor}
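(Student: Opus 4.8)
The plan is to establish the equivalences by a cycle of implications, exploiting the ordinary structure of $V = V_p(E)$ and the already-proven Theorem \ref{thm2A} for the hard direction. First I would dispense with the elementary equivalences. The implications $(1)\Leftrightarrow(2)$ and $(3)\Leftrightarrow(4)$ follow from the fact that the Weil pairing identifies $V_p(E)$ with $V_p(E^\vee)$ (so $E$ and $E^\vee$ generate the same field $K(E_\infty)$ and have the same reduction behavior), together with the isogeny $E \to E^\vee$ defined over $K$ after possibly a finite base change; I would note that finiteness of $p$-power torsion in $L$ is insensitive to $K$-isogeny since such an isogeny has finite kernel. For $(1)\Leftrightarrow(3)$, since $E$ has good ordinary reduction, the formal group $\widehat{E}$ sits in the exact sequence $0 \to \widehat{E}(\mathfrak{m}_L) \to E(L) \to \tilde E(k_L) \to 0$ (here I use that $L$ is a union of finite extensions with their valuation rings), and $\widehat{E}(\mathfrak{m}_L)[p^\infty]$ is finite because the formal group over a complete valuation ring of residue characteristic $p$ has only finitely many $p$-power torsion points in any algebraic extension — this is where the key local input enters. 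Passing to $p$-power torsion and taking the long exact sequence then gives $(1)\Leftrightarrow(3)$.

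Next I would handle $(3)\Leftrightarrow(5)$, which is the arithmetic heart of the reduction. Since $\tilde E$ is an ordinary elliptic curve over the residue field $k$, its $p$-divisible group splits (over $\bar k$, or over $k$ up to an unramified twist) as an extension of an étale part $\mathbb{Q}_p/\mathbb{Z}_p$ by a multiplicative part $\mu_{p^\infty}$, so the $k_L$-rational $p$-power torsion of $\tilde E$ is controlled by the étale quotient, whose $k_L$-points correspond to Frobenius-fixed vectors. Concretely $\tilde E(k_L)[p^\infty]$ is infinite precisely when the relevant eigenvalue of Frobenius acts trivially on an infinite chunk, which happens exactly when $k_L$ contains a tower of extensions of $k$ of unbounded $p$-power degree — i.e. when $k_L$ is \emph{not} a potential prime-to-$p$ extension of $k$. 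I would cite Remark \ref{prime-to-p-remark}(ii) here, noting that since $L$ is a $p$-adic Lie extension containing $K(\mu_{p^\infty})$, the field $k_L$ is a potential prime-to-$p$ extension of $k$ if and only if $k_L/k$ is finite, which is the clean form of the statement to compare against.

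Finally, for $(5)\Rightarrow(6)$ I simply invoke Theorem \ref{thm2A}: the hypothesis that $k_L$ is a potential prime-to-$p$ extension of $k$ is exactly what Theorem \ref{thm2A} requires to conclude that $V$ has vanishing $J_V$-cohomology. For the converse $(6)\Rightarrow(5)$ — which is the genuinely new assertion here and which I expect to be the main obstacle — I would argue contrapositively: assuming $k_L$ is not a potential prime-to-$p$ extension, so by the previous paragraph $\tilde E(k_L)[p^\infty]$ (equivalently $E(L)[p^\infty]$) is infinite, I must produce nonvanishing cohomology. The natural target is $H^0(J_V, V) \neq 0$. Using Remark \ref{rem2}, the vanishing of $H^0(J_V,V)$ is equivalent to the finiteness of $(V/T)^{G_L}$ for a $G_K$-stable lattice $T$; but $(V/T)^{G_L} \supseteq (E[p^\infty])^{G_L} = E(L)[p^\infty]$, which we have assumed infinite. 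Hence $H^0(J_V, V) \neq 0$, so $V$ does not have vanishing $J_V$-cohomology, completing the cycle. The subtle point to get right is that the hypotheses of Corollary \ref{van-ord-abelvar} — that $L$ contains $K(\mu_{p^\infty})$ and the $p$-torsion coordinates of $E$, and that $L$ is (implicitly, via the Lie-extension setup) a $p$-adic Lie extension — are precisely what force the dichotomy "$k_L/k$ finite" vs. "$E(L)[p^\infty]$ infinite," with no intermediate behavior; without the $p$-torsion-point hypothesis one can have $H^0$ vanish while $k_L/k$ is infinite of finite $p$-part, so I would take care to use that hypothesis exactly at the step linking $(3)$ to the ordinary étale-quotient analysis.
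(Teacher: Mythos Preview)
Your overall architecture matches the paper's: the equivalence of (1)--(5) is established first, then (5)$\Rightarrow$(6) is Theorem~\ref{thm2A}, and (6)$\Rightarrow$(1) goes through $H^0(J_V,V)\neq 0$ via the identification $(V/T)^{G_L}\simeq E(L)[p^\infty]$. The paper simply cites \cite{Ozeki}, Corollary~2.1 for the equivalence of (1)--(5), while you attempt to reprove it; that is a legitimate choice, but your argument for (1)$\Leftrightarrow$(3) contains a genuine error.

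You assert that $\widehat{E}(\mathfrak{m}_L)[p^\infty]$ is finite ``because the formal group over a complete valuation ring of residue characteristic $p$ has only finitely many $p$-power torsion points in any algebraic extension.'' This is false: the formal multiplicative group $\widehat{\mathbb{G}}_m$ has $\widehat{\mathbb{G}}_m(\mathfrak{m}_{K(\mu_{p^\infty})})[p^\infty]=\mu_{p^\infty}$, which is infinite, and the formal group of an ordinary elliptic curve is a twist of $\widehat{\mathbb{G}}_m$. In the notation of Proposition~\ref{PropA}, $G_K$ acts on $T_p(\widehat{E})$ by the character $\varphi=\chi\psi^{-1}$; since $L\supseteq K(\mu_{p^\infty})$ forces $\chi|_{G_L}=1$, the formal-group torsion over $L$ is governed by $\psi^{-1}|_{G_L}$, which can perfectly well have finite image. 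What actually makes (1)$\Leftrightarrow$(3) work is that \emph{both} pieces are controlled by the same unramified character: $\widehat{E}[p^\infty]^{G_L}$ is infinite iff $\psi^{-1}|_{G_L}$ has finite image iff $\psi|_{G_L}$ has finite image iff $\tilde{E}(k_L)[p^\infty]$ is infinite. This is the substance of Ozeki's argument, and it is exactly where the hypothesis $K(\mu_{p^\infty})\subset L$ is used. Your contrapositive for (6)$\Rightarrow$(5) also implicitly relies on the implication $\neg(3)\Rightarrow\neg(1)$, which does \emph{not} follow from surjectivity of reduction (the map $E(L)[p^\infty]\to\tilde{E}(k_L)[p^\infty]$ need not be onto) but again from the $\psi$-analysis just described. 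Once you replace the incorrect finiteness claim with this character comparison, the rest of your outline goes through and agrees with the paper's proof.
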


\begin{proof}
The equivalence of the first five statements 
is given by Corollary 2.1 in \cite{Ozeki}.
Theorem \ref{thm2A} shows that condition (5) 
implies condition (6).  
Note that condition (1) is equivalent to 
$H^0(J_V, V) = 0$ (cf.\ e.g.\ \cite{KT}, Lem. 2.1),
so condition (6) implies (1).
\end{proof}



We now give the proof of Theorem \ref{thm2A}. 
First we note that we may reduce the proof to the case 
$L=L(\mu_{p^\infty})$. Indeed letting
$L'=L(\mu_{p^\infty})$ and $J'_V = \rho(G_{L'})$,
then $J_V'$ is a closed normal subgroup of $J_V$
and we see that if $V$ has vanishing $J'_V$-cohomology
then
\[ H^n (J_V, V) \simeq H^n(J_V/{J'_V}, H^0(J_V',V)) \hspace{20pt} n \geq 0,\]
and a priori, $V$ has vanishing $J_V$-cohomology.
We assume henceforth that $L=L(\mu_{p^\infty})$.
Considering $H^n(J_V,V)$ as a representation of $H_V/J_V$, 
we see that Theorem \ref{thm2A} follows if we prove 
the following lemma.

\begin{lemma}\label{lemA}
Assume the hypothesis in Theorem \ref{thm2A}. 
Then after a finite extension $K'/K$,
the quotient $H_V / J_V$ acts unipotently 
on $H^n(J_V,V)$ for all $n \geq 0$.
\end{lemma}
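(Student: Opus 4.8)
The plan is to exploit the ordinarity of $V = V_p(E)$ together with the fact that $L/K$ has residue field which is a potential prime-to-$p$ extension of $k$. First I would replace $K$ by a finite extension so that $E$ has good ordinary reduction over $K$ and $K(V) = K(E_\infty) \supseteq K(\mu_{p^\infty})$; by the reduction already carried out we may assume $L = L(\mu_{p^\infty})$, so $L \supseteq K(\mu_{p^\infty})$ and $J_V = \rho_E(G_L)$ is a closed normal subgroup of $H_V$. Since $E$ has good ordinary reduction, $V$ sits in a short exact sequence $0 \to \Fil^1 V \to V \to V/\Fil^1 V \to 0$ of $G_K$-modules, where $I_K$ acts on $\Fil^1 V$ by $\chi$ and trivially on the quotient $V/\Fil^1 V$ (after a finite unramified extension, absorbed into our allowed base change). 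The quotient $\mathscr{G} := H_V/J_V \simeq \Gal(M/K(\mu_{p^\infty}))$ with $M = K(E_\infty)\cap L$ is a $p$-adic Lie group, and I want it to act unipotently on each $H^n(J_V, V)$.

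The key structural input is the conjugation relation of the form (\ref{eqn1}): because $E$ has good ordinary reduction, the commutator structure of $\Gal(K(E_\infty)/K)$ forces the relevant inertia-type subgroup to be conjugated by a Galois element $\sigma$ exactly via the character $\chi(\sigma)$ (this is visible from the Borel/Cartan description in Proposition \ref{prop3}(iii),(iv): the unipotent radical, resp. the ramified part of the Cartan, is scaled by the cyclotomic character under the torus action). So I would first establish that $\Gal(L/K)$ acts on $\Gal(L/K(\mu_{p^\infty}))$, and also that $H_V$ acts on a suitable normal subgroup, via a character $\varepsilon$ which is $\chi$ times a finite-order character. Then Lemma \ref{lemB} applies: after a finite extension $K'/K$, the group $\mathscr{H} = \Gal(L/K(\mu_{p^\infty}))$ — hence its image, hence $\mathscr{G}$ viewed appropriately — acts unipotently on any $p$-adic representation $W$ of $\Gal(L/K)$ satisfying that relation. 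The representations I want to feed in are $H^n(J_V, V)$, so I need these cohomology groups to be finite-dimensional $\Q_p$-vector spaces carrying a natural continuous $\Gal(L/K)$-action compatible with the relation; finite-dimensionality follows because $J_V$ is a compact $p$-adic Lie group acting on the finite-dimensional $V$.

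The heart of the argument, and the step I expect to be the main obstacle, is verifying that each $H^n(J_V,V)$ genuinely carries a representation to which Lemma \ref{lemB} applies — i.e. identifying the correct character $\varepsilon$ governing the $\mathscr{G}$-action on the cohomology. Here I would use the ordinary filtration: the inflation-restriction / Hochschild–Serre machinery with respect to the subgroup $J_V^{\mathrm{Fil}} := $ (the part of $J_V$ acting through $\Fil^1$) reduces the computation to the graded pieces, on which the torus acts by $\chi$ on $\Fil^1 V$ and trivially on $V/\Fil^1 V$; tracking how $\chi$ propagates through the cohomology of the unipotent (or Cartan) part then shows $\mathscr{G}$ acts on $H^n(J_V,V)$ through $\chi^{a_n}$ times a finite character for suitable integers $a_n$, which is still of the form required by Lemma \ref{lemB} (the lemma only needs $\varepsilon$ to have open image in $\Z_p^\times$, which $\chi^{a_n}\cdot(\text{finite})$ does as long as $a_n \neq 0$; the case $a_n = 0$ must be handled separately, but there the relevant subgroup acts trivially and unipotence is automatic after noting the coefficients are then a trivial-action module, whose only obstruction is finite and killed by a finite extension). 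Once unipotence of the $\mathscr{G}$-action on each $H^n(J_V,V)$ is in hand over $K'$, the lemma is proved; combined with Lemma \ref{lemC} and Theorem \ref{thm0} applied to $H_V$ (via $H^0$), this will then yield the vanishing of $J_V$-cohomology in Theorem \ref{thm2A}.
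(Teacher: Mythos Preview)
Your proposal correctly identifies Lemma \ref{lemB} as the key tool and gestures at the right structural input (the conjugation structure inside $\Gal(K(E_\infty)/K)$ coming from Proposition \ref{prop3}(iii),(iv)), but the ``heart of the argument'' you describe misreads what Lemma \ref{lemB} requires. The character $\varepsilon$ in Lemma \ref{lemB} governs the \emph{conjugation action} of $\mathscr{G}$ on its normal subgroup $\mathscr{H}$ (relation (\ref{eqn1}): $\sigma\tau\sigma^{-1} = \tau^{\varepsilon(\sigma)}$), not the action of $\mathscr{G}$ on the coefficient module $W$. Once that purely group-theoretic relation is established, Lemma \ref{lemB} applies to \emph{every} $p$-adic representation $W$ of $\mathscr{G}$ simultaneously: there is no need to analyse $H^n(J_V,V)$, track the ordinary filtration through cohomology, or compute exponents $a_n$. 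Your proposed step of ``identifying the correct character $\varepsilon$ governing the $\mathscr{G}$-action on the cohomology'' via Hochschild--Serre on the graded pieces is therefore aimed at the wrong target.

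What actually must be done --- and what the paper does --- is to verify relation (\ref{eqn1}) for $\mathscr{G} = \Gal(M/K)$ acting by conjugation on a finite-index subgroup of $\mathscr{H} = H_V/J_V = \Gal(M/K(\mu_{p^\infty}))$. The explicit upper-triangular form of $\rho_E$ (Proposition \ref{PropA}) does the work: set $N_\infty = K(E_\infty)\cap K^{\text{ur}}(\mu_{p^\infty})$ and $M' = M\cap N_\infty$; any lift $\tilde h$ of $h\in\Gal(M/M')$ to $\Gal(K(E_\infty)/N_\infty)$ has $\varphi(\tilde h)=\psi(\tilde h)=1$, so $\rho_E(\tilde h)$ is unipotent upper-triangular, and a direct matrix computation shows conjugation by $\tilde g$ raises it to the power $\varphi(\tilde g)\psi(\tilde g)^{-1} = (\chi\psi^{-2})(\tilde g)$. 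Thus $\varepsilon = \chi\psi^{-2}$, which has open image since $\psi$ is unramified (it is not $\chi$ times a finite-order character as you suggest, but this does not matter). The residue-field hypothesis, whose role you leave unexplained, enters exactly here: it forces $k_M/k$ to be finite (Remark \ref{prime-to-p-remark}(ii)), hence $M'$ is finite over $K(\mu_{p^\infty})$ and $\Gal(M/M')$ has finite index in $H_V/J_V$; without this, relation (\ref{eqn1}) would hold only on a subgroup possibly much smaller than $\mathscr{H}$, and Lemma \ref{lemB} would not yield unipotence of all of $H_V/J_V$. Finally, the relevant ambient group is $\Gal(M/K)$, not $\Gal(L/K)$: the latter has no controlled structure, since $L$ is an arbitrary Galois extension.
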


Let us show how the theorem follows from the lemma. 
Suppose the lemma holds. Replacing $K$ by a finite extension, 
we may assume that $H_V / J_V$ acts unipotently on 
$H^n(J_V,V)$ for all $n \geq 0$. 
We prove the vanishing by induction 
on $n$. The case $n = 0$ is already known
(cf.\ \cite{Ozeki}, Thm.\ 2.1-(1)). 
By Theorem \ref{thm0}, we know
that $V$ has vanishing $H_V$-cohomology. 
Now let $n \geq 1$ and assume that 
$H^{m}(J_V,V) = 0$ for all $1 \leq m < n$.
Then the Hochschild-Serre spectral sequence
(cf.\ \cite{HS}, Thm.\ 2) gives the following
exact sequence:
\[ 0 \rightarrow H^n(H_V / J_V, V^{J_V}) 
\rightarrow H^n(H_V, V) \rightarrow 
H^0(H_V / J_V, H^n(J_V,V)) \rightarrow 
H^{n+1}(H_V / J_V, V^{J_V}). \]
As the second and last terms both vanish, 
we have
$H^0(H_V / J_V, H^n(J_V,V))=0$. The 
vanishing of $J_V$-cohomology
follows from Lemma \ref{lemC} since 
$H_V / J_V$ acts unipotently on $H^n(J_V,V)$.

It remains to prove Lemma \ref{lemA}. 

\begin{proof}[(Proof of Lemma \ref{lemA})]
Let $K^{\text{ur}}$ be the maximal unramified
extension of $K$ and put 
$N_{\infty} = K(E_\infty) \cap K^{\text{ur}}(\mu_{p^\infty})$.
We may view $N_\infty$ as the extension of $K$ obtained by 
adjoining all $p$-power roots of unity to the the 
maximal subextension $N$ of $K(E_\infty)$ which is unramified over $K$. 
Let $M' = M \cap N_\infty$ and we put $G := \Gal(M / K)$,
$H := \Gal(M / M')$ and $Y := \Gal(M' / K) = G/H$. 
Note that $M$ is totally ramified over $M'$. 
In fact, $M'$ is the extension of $K$ obtained
by adjoining all $p$-power roots of unity in $K(E_\infty)$ 
to the maximal subextension of $M$ that is unramified over $K$. 
As $M$ is a $p$-adic Lie extension over $K$
and its residue field $k_M$ is 
potentially prime-to-$p$ over $k$, we see that $k_M$
is a finite extension of $k$ 
(cf.\ Remark \ref{prime-to-p-remark} (ii))
and that $M'$ is of 
finite degree over $K(\mu_{p^\infty})$. 
We have the following diagram of fields:
\begin{center}
\begin{tikzpicture}[%
  back line/.style={densely dotted},
  cross line/.style={preaction={draw=white, -,line width=6pt}}]
  \node (A1) {$K(E_\infty)$};
  \node [below of=A1] (B1) {$N_\infty$}; 
  \node [below of=B1] (B2)
{$N$};
  \node (A2) [left of=A, below of=A, node distance=1.5cm] {$M$};
  \node [below of=A2] (B3) {$M'$};
  \node [below of=B3] (C2) {$K$};
  \node (C) [left of=A2, above of=A2, node distance=1cm] {$L$};
  
  \draw[cross line] (C) -- (A2) -- (B3) -- (C2) -- (B2) -- (B1);
  \draw[cross line] (A2) -- (A1) -- (B1) -- (B3);
  \draw[-, bend left = 70pt] (A1) to node [right]{$G_V$} (C2);
  \draw[-, bend right = 30pt] (A2) to node [left]{$H$} (B3);
  \draw[-, bend right = 30pt] (B3) to node [left]{$Y$} (C2);
  \draw[-, bend right = 70pt] (A2) to node [left]{$G$} (C2);
  \end{tikzpicture}
\end{center}
We need an explicit description of the 
action of $Y$ on $H$.
The diagram of fields shown above clearly 
induces the following commutative diagram, 
having exact rows and surjective vertical 
maps:
\begin{equation} \label{commdiagram} 
\begin{tikzpicture} [baseline=(current  bounding  box.center)]
\matrix(m)[matrix of math nodes,
row sep=3em, column sep=2.5em,
text height=1.5ex, text depth=0.25ex]
{1 & \Gal(K(E_\infty)/N_\infty) & G_V & \Gal(N_\infty/K) & 1 \\
1 & H & G & Y & 1\\};
\path[->,font=\scriptsize]
(m-1-1) edge (m-1-2)
(m-1-2) edge (m-1-3)
(m-1-3) edge (m-1-4)
(m-1-4) edge (m-1-5)
(m-2-1) edge (m-2-2)
(m-2-2) edge (m-2-3)
(m-2-3) edge (m-2-4)
(m-2-4) edge (m-2-5);
\path[->>,font=\scriptsize]
(m-1-2) edge (m-2-2)
(m-1-3) edge (m-2-3)
(m-1-4) edge (m-2-4);
\end{tikzpicture}
\end{equation} 
Moreover the above diagram is compatible with 
the actions by inner automorphism, in the 
sense that $\sigma \cdot h = g h g^{-1}$ for 
$\sigma \in Y$, $h \in H$ and a lifting $g$ of 
$\sigma$ to $G$, if and only if
$\tilde{\sigma} \cdot \tilde{h} = \tilde{g} \tilde{h} \tilde{g}^{-1}$, 
for liftings $\tilde{\sigma}$ 
(resp.\ $\tilde{g}$, $\tilde{h}$) of $\sigma$ 
(resp.\ $g$, $h$) to $\Gal(N_\infty/K)$ 
(resp.\ $G_V$, $G_V$). 
In order to obtain the desired explicit 
description for the action of $Y$ on $H$, 
we will use the following well-known result 
(see for instance Proposition 2.6 in 
\cite{Ozeki} which is formulated in a more 
general fashion):
\begin{prop}\label{PropA}
Let $E$ be an elliptic curve over $K$ with good 
ordinary reduction. For some suitable basis of $T_p(E)$, the 
representation $\rho_E$ has the form 
\[ \begin{pmatrix}
\varphi & a  \\
0 & \psi 
\end{pmatrix}, \]
where 
\begin{enumerate}
	\item[(i)] $\varphi : G_K \rightarrow \mathbb{Z}_p^{\times}$ 
	is a continuous character, 
	\item[(ii)] $\psi : G_K \rightarrow \mathbb{Z}_p^{\times}$ 
	is an unramified continuous character and
	\item[(iii)] $a : G_K \rightarrow \Z_p$ is a continuous map.
\end{enumerate}
Moreover, $\chi = \varphi \cdot \psi$.
In particular, the restriction $\varphi |_{I_K}$ 
of $\varphi$ to the inertia subgroup
$I_K$ of $G_K$ coincides with the $p$-adic cyclotomic
character.
\end{prop} 
\noindent 
In fact, the character $\varphi$ in the proposition is
the homomorphism which gives the action of $G_V$ on the Tate module 
$T_p(\mathcal{E}(p)^0)$, while $\psi$ is the homomorphism
giving the action of $G_V$ on $T_p(\mathcal{E}(p)^{\text{\'et}})$.
Here, $\mathcal{E}(p)$ is the $p$-divisible group associated
with the N{\'er}on model of $E$ over $\mathcal{O}_K$ and 
the superscripts ``$0$" and ``{\'e}t" indicate 
the connected $p$-divisible subgroup and 
{\'e}tale quotient of $\mathcal{E}(p)$, respectively.
We now choose a basis of the Tate module $T_p(E)$
of $E$ with respect to which the action
of $G_V$ on $T_p(E)$ is given as in the 
previous proposition.
Let $h \in H$ and $\sigma \in Y$. 
Let $g$ be a lifting of $\sigma$ to $G$.
Let $\tilde{g}$ (resp.\ $\tilde{h}$) be a lifting of $g$ 
(resp.\ $h$) to $G_V$. Note that 
$\tilde{h} \in \Gal(K(E_\infty)/N_\infty)$.
In particular, we have
$\varphi(\tilde{h}) = 1$ and $\psi(\tilde{h}) = 1$. 
A matrix calculation gives
\[ \rho_E(\tilde{g} \cdot \tilde{h})
= \rho_E(\tilde{g}) \rho_E(\tilde{h}) \rho_E(\tilde{g})^{-1} 
= \begin{pmatrix}
1 & \varphi(\tilde{g}) a(\tilde{h}) \psi(\tilde{g})^{-1} \\
0 & 1 
\end{pmatrix}. \]
Since $\varphi = \chi \cdot \psi^{-1}$,
the above equation becomes
\[ \rho_E(\tilde{g}) \rho_E(\tilde{h}) \rho_E(\tilde{g})^{-1} 
= \begin{pmatrix}
1 & a(\tilde{h}) \\
0 & 1 
\end{pmatrix}^{\chi\cdot\psi^{-2}(\tilde{g})}. \]
By the compatibility of the diagram (\ref{commdiagram})
with the action by inner automorphisms,
the preceding equation gives the relation 
\begin{equation*}
\sigma \cdot h = h^{\varepsilon(\sigma)} 
\end{equation*}
where $\varepsilon = \chi\cdot\psi^{-2}$.
Since $\psi$ is unramified and the image of 
$\chi$ is open in $\Z_p^{\times}$, we deduce 
that the image of the character $\varepsilon$ 
is an open subgroup of $\Z_p^{\times}$.
The desired result follows from Lemma \ref{lemB}. 
This completes the proof of Lemma \ref{lemA}
and of Theorem \ref{thm2A}.
\end{proof}

\begin{remark}
\normalfont
The $p$-adic Galois representation $V$ of Theorem \ref{thm2A}
is ordinary in the sense of Definition \ref{ordrepn}
as we can see from Proposition \ref{PropA}. 
We note that the proof of Theorem \ref{thm2A}
can be replicated for ordinary $p$-adic Galois 
representations whose filtration has the same shape as that for
the Tate module of elliptic curves with good 
ordinary reduction. For our purposes, we content ourselves with
the theorem above. 
\end{remark}

On the other hand, we may obtain 
some vanishing results with respect to a more particular 
family of $L$ for more general higher-dimensional ordinary 
representations coming from geometry as follows:

\begin{thm}\label{thm3} 
Let $X$ be a proper smooth variety over $K$ with potential
good ordinary reduction and let $E/K$ be an elliptic curve
with potential good supersingular reduction. 
Let $i$ a positive odd integer and we put
$V = H^{i}_{\text{\'et}} (X_{\overline{K}}, \mathbb{Q}_p)$
and $V' = V_p(E)$.
Then $V$ and $V'$ are cohomologically coprime.
\end{thm}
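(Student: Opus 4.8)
The plan is to apply Corollary~\ref{prop2}: it suffices to show that the Lie algebras $\mathfrak{h}_1 = \Lie(\rho(G_{K(\mu_{p^\infty})}))$ attached to $V$ and $\mathfrak{h}_2 = \Lie(\rho_E(G_{K(\mu_{p^\infty})}))$ attached to $V' = V_p(E)$ have no common simple factor, after passing (harmlessly, by the remark at the end of the introduction) to a finite extension of $K$ over which $X$ has good ordinary reduction and $E$ has good supersingular reduction, and so that $K(V)$ and $K(E_\infty)$ both contain $K(\mu_{p^\infty})$. First I would compute $\mathfrak{h}_2$: by Proposition~\ref{prop3}, parts (i) and (ii), if $E$ has good supersingular reduction then $\mathfrak{i} = \mathfrak{g}$, so after a finite extension $\mathfrak{h}_2 = \mathfrak{g}$ is either a non-split Cartan subalgebra of $\End(V_p(E))$ (the FCM case), which is a $2$-dimensional commutative Lie algebra that becomes $\overline{\Q_p} \times \overline{\Q_p}$ after extension of scalars, or all of $\End(V_p(E)) \cong \mathfrak{gl}_2$, whose only simple factor is $\mathfrak{sl}_2$. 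Since $\mathfrak{h}_2$ after $\otimes \overline{\Q_p}$ has simple factors among $\{0, \mathfrak{sl}_2\}$, the claim reduces to showing that $\mathfrak{h}_1 \otimes \overline{\Q_p}$ has no simple factor isomorphic to $\mathfrak{sl}_2$ (and in the FCM case there is nothing further to check, since a commutative Lie algebra has no simple factors at all).

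The heart of the argument is therefore to understand the semisimple part of $\mathfrak{h}_1$ for an ordinary representation $V = H^i_{\text{\'et}}(X_{\overline K}, \Q_p)$ with $i$ odd. The key input is the ordinary filtration: $V$ has a $G_K$-stable filtration $\{\Fil^j V\}$ whose graded pieces have inertia acting through powers of $\chi$. Restricting to $G_{K(\mu_{p^\infty})}$, the cyclotomic character becomes "almost trivial" — more precisely, the image $H_V = \rho(G_{K(\mu_{p^\infty})})$ acts on each graded piece $\Fil^j V/\Fil^{j+1}V$ through a character that is trivial on an open subgroup of inertia, so that $H_V$ acts on $V$ by a representation which, up to finite index, is upper-triangular unipotent with respect to the ordinary filtration. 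Consequently $\mathfrak{h}_1$ is a Lie algebra of nilpotent endomorphisms preserving the filtration; in particular $\mathfrak{h}_1$ is a \emph{nilpotent} (hence solvable) Lie algebra, and a solvable Lie algebra has no nonzero semisimple quotient, so it certainly has no simple factor isomorphic to $\mathfrak{sl}_2$. This gives the result in both subcases for $E$. I would double-check the relevant compatibility by comparing with the $\varepsilon = \chi \cdot \psi^{-2}$ computation in the proof of Lemma~\ref{lemA} and with the general ``potential unipotence'' mechanism of Lemma~\ref{lemB}, which is exactly the phenomenon being invoked: for an ordinary representation the action of $\Gal(K(V)/K(\mu_{p^\infty}))$ is potentially unipotent.

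The main obstacle I anticipate is justifying cleanly that $\mathfrak{h}_1$ is nilpotent rather than merely that each $\rho(g)$ for $g \in H_V$ is quasi-unipotent: one needs that the \emph{whole group} $H_V$ (after a finite extension) lies in the unipotent radical of the Borel stabilizing the ordinary filtration, which follows from the fact that the diagonal torus action is through a product of powers of $\chi$ (killed on an open subgroup by restricting to $K(\mu_{p^\infty})$ and enlarging $K$) — this is where Lemma~\ref{lemma1}-style control of determinants and the ordinarity hypothesis combine. Once $\mathfrak{h}_1$ is known to be nilpotent the rest is immediate: a nilpotent Lie algebra has trivial semisimplification, so it shares no simple factor with $\mathfrak{h}_2$ regardless of which of the two supersingular subcases occurs, and Corollary~\ref{prop2} yields cohomological coprimality of $V$ and $V'$. $\hfill\qed$
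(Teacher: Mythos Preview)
Your strategy via Corollary~\ref{prop2} works in the non-FCM case but has a genuine gap in the FCM case, and the intermediate claim that $\mathfrak{h}_1$ is nilpotent is not correct.

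First, on $\mathfrak{h}_1$: ordinarity says that \emph{inertia} acts on $\mathrm{gr}^j V$ by $\chi^j$, so the full $G_K$ acts there by $\chi^j$ times an \emph{unramified} character. Restricting to $G_{K(\mu_{p^\infty})}$ kills $\chi$ but not the unramified characters, since $K(\mu_{p^\infty})$ has (essentially) the same residue field as $K$. Hence $H_V$ is contained in the parabolic stabilizing the filtration, but not in its unipotent radical: $\mathfrak{h}_1$ is \emph{solvable}, not nilpotent. Your anticipated fix (``the diagonal torus action is through powers of $\chi$'') is exactly what fails. The paper records this correctly by introducing $N_\infty = K(V)\cap K^{\mathrm{ur}}(\mu_{p^\infty})$ and observing that only $\Lie(\Gal(K(V)/N_\infty))$ is nilpotent, with $\mathfrak{h}_1/\Lie(\Gal(K(V)/N_\infty))$ an (unramified) abelian piece. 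In the non-FCM case solvability is still enough: $\mathfrak{h}_2\simeq\mathfrak{sl}_2$ is simple, so any common nonzero quotient would force an $\mathfrak{sl}_2$-quotient of the solvable $\mathfrak{h}_1$, which is impossible, and Theorem~\ref{main} applies. (Incidentally, $\mathfrak{h}_2$ is not $\mathfrak{g}$ but its codimension-one ideal: $\mathfrak{sl}_2$ in the non-FCM case, a $1$-dimensional Cartan of $\mathfrak{sl}_2$ in the FCM case.)

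The real problem is the FCM case. There $\mathfrak{h}_2$ is $1$-dimensional abelian, and $\mathfrak{h}_1$ is solvable with abelian composition factors; the common quotient $\Lie(\Gal(M/K(\mu_{p^\infty})))$ can a priori be $1$-dimensional abelian, and neither reading of ``no common simple factor'' in Theorem~\ref{main} rules this out. So Corollary~\ref{prop2} does not apply, and your sentence ``in the FCM case there is nothing further to check'' is where the argument breaks. The paper's proof supplies the missing idea: the elements of $\mathfrak{h}_2$ (a non-split Cartan of $\mathfrak{sl}_2$) are \emph{semisimple}, so the same holds for its quotient $\Lie(\Gal(M/K(\mu_{p^\infty})))$; meanwhile the subspace $\Lie(\Gal(M/(M\cap N_\infty)))$, being a quotient of the nilpotent $\Lie(\Gal(K(V)/N_\infty))$, consists of nilpotent elements. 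A semisimple nilpotent element is zero, so $M/(M\cap N_\infty)$ is finite; and $M\cap N_\infty$ is an unramified extension of $K(\mu_{p^\infty})$ inside $K(E_\infty)$, which is finite because $\rho_E(I_K)$ is open in $\rho_E(G_K)$. Thus $[M:K(\mu_{p^\infty})]<\infty$ and Lemma~\ref{lemma6} finishes. This semisimple-versus-nilpotent dichotomy, together with the finiteness of the unramified part of $K(E_\infty)/K$, is the ingredient your proposal is missing.
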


\begin{proof}
To prove the theorem we have to show that the following 
statements hold: \\
(a) If we put $L = K(E_{\infty})$ and $J_V = \rho(G_L)$, 
	then $V$ has vanishing $J_V$-cohomology; and \\	
(b) If we put $L' = K(V)$ and $J_{V'} = \rho_E(G_{L'})$,	
	then $V'$ has vanishing  $J_{V'}$-cohomology. \\
We only prove statement (a) since statement (b) can be proved
in a similar manner.	
Replacing $K$ with a finite extension, we may assume that 
$X$ has good ordinary reduction and $E$ has good supersingular 
reduction over $K$. We may also assume that $K(V)$ contains
$K(\mu_{p^\infty})$ by extending $K$ further 
(cf. Lemma \ref{lemma1}). 
Let $K^{\text{ur}}$ be the maximal unramified extension 
of $K$ in $\overline{K}$ and put 
$N_{\infty} = K(V) \cap K^{\text{ur}}(\mu_{p^\infty})$. 
The assumption on $V$ implies that the inertia subgroup 
$I_K$ of $G_K$ acts on the associated graded quotients 
$\text{gr$^r$ } V$ by the $r$th power of the $p$-adic
cyclotomic character. In particular the group 
\[ \Gal(K(V)/N_{\infty}) \simeq 
\Gal(K^{\text{ur}}(V)/K^{\text{ur}}(\mu_{p^\infty})) \] 
acts unipotently on $V$. Hence, $\Lie(\Gal(K(V)/N_{\infty}))$
is a nilpotent Lie algebra contained in
$\Lie(H_V) = \Lie(\Gal(K(V)/K_{\infty,V}))$. 
Recall that we may identify $\Gal(L/K)$ with the subgroup 
$\rho_E(G_K)$ of $\Aut(T_p(E)) \simeq \GL_2(\mathbb{Z}_p)$. 
Put $\mathfrak{g} = \Lie(\Gal(L/K))$ 
and $\mathfrak{h}= \Lie(\Gal(L/K(\mu_{p^\infty})))$. 
If $E$ has no formal complex multiplication, then 
$\mathfrak{g} \simeq \mathfrak{gl}_2(\mathbb{Q}_p)$ 
by Proposition \ref{prop3} $(ii)$ and so 
$\mathfrak{h} \simeq \mathfrak{sl}_2(\mathbb{Q}_p)$.
In particular, $\mathfrak{h}$ is simple. 
It immediately follows from Theorem \ref{main} 
that $V$ has vanishing $J_V$-cohomology.
This proves (a) when $E$ has no formal complex multiplication.
We now suppose that $E$ has formal complex multiplication. 
We claim that $M = K(V) \cap L$ is a finite extension 
of $K(\mu_{p^\infty})$.
The restriction map induces a surjection
\[ \Gal(K(V)/N_\infty) \twoheadrightarrow 
\Gal(MN_\infty / N_\infty) \simeq \Gal(M / {M \cap N_\infty}), \]
from which we obtain a surjection of Lie algebras
\[ \Lie(\Gal(K(V)/N_\infty)) \twoheadrightarrow 
\Lie(\Gal(M / {M \cap N_\infty})). \]
As $\Lie(\Gal(K(V)/N_\infty))$ is nilpotent,
we see that $\Lie(\Gal(M / {M \cap N_\infty}))$ is a nilpotent 
subalgebra of $\Lie(\Gal(M / K(\mu_{p^\infty})))$.  
Since $E$ has formal complex multiplication,
we know from Proposition \ref{prop3} $(i)$ that 
$\mathfrak{g}$ is a non-split Cartan subalgebra of 
$\End(V_p(E)) \simeq \mathfrak{gl}_2(\mathbb{Q}_p)$. 
Thus $\mathfrak{g}$ contains the center $\mathfrak{c}$ 
of $\mathfrak{gl}_2(\mathbb{Q}_p)$ and 
$\mathfrak{h} \simeq \mathfrak{g}/\mathfrak{c}$ is a
Cartan sub-algebra of $\mathfrak{sl}_2(\mathbb{Q}_p)$
(cf.\ \cite{Bour2}, Ch.7, \textsection 2, Proposition 5).
Its elements are semisimple in $\mathfrak{sl}_2(\mathbb{Q}_p)$ by 
(\emph{op.\ cit.}, \textsection 4 Thm.\ 2).
Thus, the elements of $\Lie(\Gal(M/K(\mu_{p^\infty})))$ 
are also semisimple  
since it is a quotient of $\mathfrak{h}$. 
Since the Lie algebra $\Lie(\Gal(M/M \cap N_\infty))$ 
is a nilpotent factor of $\Lie(\Gal(M/K(\mu_{p^\infty})))$,
we then see that $\Lie(\Gal(M / M \cap N_\infty)) = 0$. 
This means $M / M \cap N_\infty$ is a finite extension. 
But note that $M \cap N_\infty$ is unramified over $K(\mu_{p^\infty})$. 
Since $\rho_E(I_K)$ is open in $\rho_E(G_K)$ 
again by Proposition \ref{prop3} $(i)$, 
$M \cap N_\infty$ is finite over $K(\mu_{p^\infty})$.
Thus $M / K(\mu_{p^\infty})$ is a finite extension. 
As we remarked earlier, the finiteness of $[M:K(\mu_{p^\infty})]$
is equivalent to the finiteness of the index of $J_V = \rho(G_L)$
in $H_V$. By Lemma \ref{lemma6}, we conclude 
that $V$ has vanishing $J_V$-cohomology.   
\end{proof}

\begin{remark}\label{rem3}          
\normalfont
In Theorem \ref{thm3} when the elliptic curve $E$  
has potential good ordinary reduction, the vanishing 
statement (b) may not hold because $H^0(J_{V'}, {V'})$ 
may be nontrivial. 
This is easily observed by taking $X=E$ and considering
$V=H^1_{\text{\'et}}(X_{\overline{K}}, \Q_p)$.
This observation in fact holds in a more general case. 
Indeed, take any abelian variety $A/K$ with 
potential good ordinary reduction and consider 
$V = V_p(A) \simeq H^{1}_{\text{\'et}} (A_{\overline{K}}, \mathbb{Q}_p)^{\vee}$. 
Since $A$ has potential good ordinary reduction, 
the field $L' = K(A_\infty)$ contains an unramified 
$\mathbb{Z}_p$-extension. 
Hence, the residue field $k_{L'}$ is not a 
potential prime-to-$p$ extension over $k$. 
Replacing $K$ and $L'$ with appropriate finite extensions 
(so that the hypothesis of Corollary \ref{van-ord-abelvar}
is satisfied), we conclude 
that the group $V'$ does not have
vanishing $J_{V'}$-cohomology.
Thus $V$ and $V'$ are not cohomologically coprime.
\end{remark}

\section{Vanishing result for elliptic curves} \label{sec:vanish for EC}

In this section, we determine the cohomological
coprimality of two Galois representations $V_p(E)$ 
and $V_p(E')$ given by elliptic curves $E$ and $E'$,
respectively.

\subsection{The case of good reduction} \label{subsec:good reduction}

We first treat the case where $E$ and $E'$ both have 
potential good reduction over $K$. 
The main result in this subsection is the following:

\begin{thm}\label{thm6} 
Let $E$ and $E'$ be elliptic curves 
with potential good reduction over $K$. 
Put $L=K(E'_\infty)$. 
Then the representations $V_p(E)$ and $V_p(E')$
are cohomologically coprime if one of the 
following conditions is satisfied:
\begin{itemize}
		\item[(i)] $E$ has potential good ordinary reduction and 
		$E'$ has potential good supersingular reduction,
		or vice versa;	
		\item[(ii)] $E$ has potential good supersingular reduction 
		with formal complex multiplication and $E'$ has potential 	
		good supersingular reduction without formal complex multiplication, 
		or vice versa;
		\item[(iii)] $E$ and $E'$ both have potential good 
		supersingular reduction with formal complex multiplication 
		and the group $E(L)[p^\infty]$ of $p$-power division 
		points of $E$ over $L$ is finite;
		\item[(iv)] $E$ and $E'$ both have potential good 
		supersingular reduction without formal complex multiplication 	
		and the group $E(L)[p^\infty]$ is finite.
	\end{itemize}
\end{thm}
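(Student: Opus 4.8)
The plan is to handle the four cases by reducing each to one of the two criteria proved in Section 3: either Theorem~\ref{main} (comparing Lie algebras) or Lemma~\ref{lemma6} (finiteness of $[M:K(\mu_{p^\infty})]$). As usual, I first replace $K$ by a finite extension so that $E$ and $E'$ acquire good reduction of the stated types over $K$ and so that both $K(E_\infty)$ and $K(E'_\infty)$ contain $K(\mu_{p^\infty})$; by the Weil pairing $K_{\infty,V}=K_{\infty,V'}=K(\mu_{p^\infty})$, so the field $M=K(E_\infty)\cap K(E'_\infty)$ is automatically a Galois extension of $K(\mu_{p^\infty})$. Write $\mathfrak{h}=\Lie(\Gal(K(E_\infty)/K(\mu_{p^\infty})))$ and $\mathfrak{h}'=\Lie(\Gal(K(E'_\infty)/K(\mu_{p^\infty})))$; these are computed from Proposition~\ref{prop3} by passing to the quotient by the center $\mathfrak{c}$ of $\mathfrak{gl}_2(\Q_p)$. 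Concretely: in the supersingular non-FCM case $\mathfrak{h}\simeq\mathfrak{sl}_2(\Q_p)$ (simple); in the supersingular FCM case $\mathfrak{h}$ is a Cartan subalgebra of $\mathfrak{sl}_2(\Q_p)$ (one-dimensional, abelian, with semisimple elements); in the ordinary case $\mathfrak{h}$ is (up to the center) either a split Cartan or a Borel of $\mathfrak{sl}_2(\Q_p)$.

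For \textbf{(i)} I invoke Theorem~\ref{thm3}: it is literally the case $X=E$ with $E'$ supersingular (and symmetrically), so nothing new is needed — alternatively one argues directly that $\mathfrak{h}'\simeq\mathfrak{sl}_2(\Q_p)$ has no simple factor in common with the solvable algebra $\mathfrak{h}$ and applies Corollary~\ref{prop2}. For \textbf{(ii)}, one of $\mathfrak{h},\mathfrak{h}'$ is $\mathfrak{sl}_2(\Q_p)$ and the other is a Cartan subalgebra of $\mathfrak{sl}_2(\Q_p)$: the semisimple algebra $\mathfrak{sl}_2$ has the single simple factor $\mathfrak{sl}_2$, which does not occur in the abelian Cartan algebra, so Corollary~\ref{prop2} gives cohomological coprimality. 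In both of these cases the Lie-algebra comparison does all the work and there is no real obstacle.

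Cases \textbf{(iii)} and \textbf{(iv)} are the substantive ones, and here the Lie-algebra criterion fails outright: when $E$ and $E'$ are both supersingular without FCM, $\mathfrak{h}\simeq\mathfrak{h}'\simeq\mathfrak{sl}_2(\Q_p)$, so they obviously share a simple factor; when both have FCM the two Cartan subalgebras may even coincide. This is why the hypothesis $E(L)[p^\infty]$ finite (equivalently $H^0(J_V,V)=0$, equivalently, by Remark~\ref{rem2}, a necessary condition for vanishing $J_V$-cohomology) must be imposed. The strategy is to show this hypothesis forces $M=K(E_\infty)\cap K(E'_\infty)$ to be finite over $K(\mu_{p^\infty})$, after which Lemma~\ref{lemma6} and symmetry finish both directions. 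To get finiteness of $[M:K(\mu_{p^\infty})]$ I argue as in the FCM part of the proof of Theorem~\ref{thm3}: the image $\rho_E(G_K)$ has the same Lie algebra $\mathfrak{g}$ as $\rho_E(I_K)$ (parts (i),(ii) of Proposition~\ref{prop3}), so $K(E_\infty)$ contains no nontrivial unramified $p$-adic Lie subextension, whence $M\cap K^{\mathrm{ur}}(\mu_{p^\infty})$ is finite over $K(\mu_{p^\infty})$; it then suffices to bound the totally ramified part of $M/K(\mu_{p^\infty})$. For this I use that $E(L)[p^\infty]$ finite is equivalent, via Corollary~\ref{van-ord-abelvar}-type reasoning (or directly \cite{KT}, Lem.~2.1, combined with the structure of $\rho_E$ restricted to inertia), to the residue field of $L$ being a potential prime-to-$p$ extension of $k$, and I translate this into a statement that $\Lie(\Gal(M/K(\mu_{p^\infty})))$ cannot contain a copy of the Lie algebra governing ramified $p$-power growth unless it is zero. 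The delicate point — and the main obstacle — is establishing this implication cleanly in the supersingular case: unlike the ordinary case (Proposition~\ref{PropA}) there is no upper-triangular normal form for $\rho_E$, so I must argue through the Sen operator / Hodge--Tate weights, or through the fact that for a supersingular $E$ the extension $K(E_\infty)/K(\mu_{p^\infty})$ has open image with Lie algebra all of $\mathfrak{sl}_2$ and is ``as ramified as possible,'' so that a proper Lie subalgebra surviving in $M$ together with the $H^0$-vanishing constraint is forced to be trivial. Once $[M:K(\mu_{p^\infty})]<\infty$ is in hand, Remark~\ref{rem1} and Lemma~\ref{lemma6} give vanishing $J_V$-cohomology, and interchanging the roles of $E$ and $E'$ (the hypothesis $E(L)[p^\infty]$ finite with $L=K(E'_\infty)$ is symmetric in its consequence for $M$) gives vanishing $J_{V'}$-cohomology, completing the proof.
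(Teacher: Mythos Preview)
Your handling of cases (i) and (ii) is fine and matches the paper. The problem is in (iii) and (iv), where your route to $[M:K(\mu_{p^\infty})]<\infty$ does not work as written.

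First, the appeal to ``Corollary~\ref{van-ord-abelvar}-type reasoning'' is misplaced: that corollary is specific to \emph{ordinary} $E$, and here $E$ is supersingular. Moreover, since $E'$ is supersingular, Proposition~\ref{prop3}(i)--(ii) already gives $\Lie(\rho_{E'}(I_K))=\Lie(\rho_{E'}(G_K))$, so the residue field of $L=K(E'_\infty)$ is \emph{automatically} a potential prime-to-$p$ extension of $k$, independently of whether $E(L)[p^\infty]$ is finite. Thus the residue-field condition carries no information about the hypothesis, and your attempt to ``translate this into a statement'' about $\Lie(\Gal(M/K(\mu_{p^\infty})))$ via ramified growth cannot get off the ground. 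You yourself flag this step as the obstacle and reach for Sen operators; that machinery is unnecessary and the vague plan around it is not an argument.

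The paper's argument is much more elementary and bypasses ramification entirely. It exploits the \emph{shape} of $\Gal(K(E_\infty)/K(\mu_{p^\infty}))$ directly to force a dichotomy. In case (iii) this group is a $1$-dimensional $p$-adic Lie group, hence (after shrinking $K$) isomorphic to $\Z_p$; the only closed subgroup of infinite index is trivial, so if $[M:K(\mu_{p^\infty})]=\infty$ then $\Gal(K(E_\infty)/M)=1$ and $K(E_\infty)\subset L$. In case (iv) the Lie algebra $\mathfrak{h}\simeq\mathfrak{sl}_2(\Q_p)$ is simple, and $\mathfrak{j}=\Lie(\Gal(K(E_\infty)/M))$ is an ideal of it, hence either $0$ or all of $\mathfrak{h}$; if $\mathfrak{j}=0$ then again $K(E_\infty)\subset L$ after a finite extension. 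In both cases one then invokes the fact (Remark~\ref{rem4}, i.e.\ Ozeki's Lemma~3.1) that for supersingular $E$, $E(L)[p^\infty]$ is finite if and only if $K(E_\infty)\not\subset L$, contradicting the hypothesis. This dichotomy-plus-Remark~\ref{rem4} is the missing idea in your proposal.
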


By symmetry, it suffices to verify the
following: 

\begin{thm}\label{thm4} 
Let $E$ and $E'$ be elliptic curves 
with potential good reduction over $K$. 
Put $L = K(E'_\infty)$. 
If one of the conditions (i) - (iv)
in Theorem \ref{thm6} is satisfied 
then $V=V_p(E)$ has vanishing $J_V$-cohomology, 
where $J_V = \rho_E (G_L)$.
\end{thm}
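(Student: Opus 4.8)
The plan is to verify, in each of the four cases, the vanishing of $J_V$-cohomology for $V=V_p(E)$ with $J_V=\rho_E(G_L)$ and $L=K(E'_\infty)$, by reducing to one of the two criteria already established: either Theorem \ref{main} (no common simple factor of the relevant Lie algebras) or Lemma \ref{lemma6} (finiteness of the index of $J_V$ in $H_V$). Throughout I would first replace $K$ by a finite extension so that both $E$ and $E'$ have good reduction of the stated type over $K$, and so that $K(E_\infty)$ and $K(E'_\infty)$ both contain $K(\mu_{p^\infty})$ (using the Weil pairing and Lemma \ref{lemma1}); by the reduction remarks in the introduction this costs nothing. Then $J_V$ is a closed normal subgroup of $H_V=\rho_E(G_{K(\mu_{p^\infty})})$ with fixed field $M=K(E_\infty)\cap L$ over $K(\mu_{p^\infty})$, and it suffices to control $M$ or the Lie algebras $\mathfrak{h}=\Lie(\Gal(K(E_\infty)/K(\mu_{p^\infty})))$ and $\mathfrak{h}'=\Lie(\Gal(L/K(\mu_{p^\infty})))$.

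For case (i), when $E$ is ordinary and $E'$ is supersingular, this is precisely Theorem \ref{thm3} specialized to $X=E$ (equivalently $X=E^\vee$, using $V_p(E)\simeq H^1_{\text{\'et}}(E^\vee_{\bar K},\Q_p(1))$), so nothing new is needed; one only has to remark that $V_p(E)$ is ordinary and that the argument there applies verbatim. For case (ii), $E$ supersingular with FCM and $E'$ supersingular without FCM (or vice versa): by Proposition \ref{prop3}, $\mathfrak{g}=\Lie(\rho_E(G_K))$ is a non-split Cartan subalgebra of $\mathfrak{gl}_2(\Q_p)$, hence $\mathfrak{h}\simeq\mathfrak{g}/\mathfrak{c}$ is a $1$-dimensional (Cartan) subalgebra of $\mathfrak{sl}_2(\Q_p)$, which is abelian; whereas $\mathfrak{h}'\simeq\mathfrak{sl}_2(\Q_p)$ (or the reverse assignment), which is simple and non-abelian. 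A $1$-dimensional abelian Lie algebra and $\mathfrak{sl}_2(\Q_p)$ have no common simple factor, so Theorem \ref{main} applies directly — note one must double-check the hypothesis of Theorem \ref{main} that $K(\mu_{p^\infty})$ is finite over $L\cap K(\mu_{p^\infty})$, which holds since after our reduction $L\supseteq K(\mu_{p^\infty})$. This gives the vanishing in case (ii).

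Cases (iii) and (iv) are where the genuine work lies, and I expect these to be the main obstacle because the two Lie algebras can now coincide, so Theorem \ref{main} is useless and one must instead prove that $M=K(E_\infty)\cap L$ is finite over $K(\mu_{p^\infty})$ and invoke Lemma \ref{lemma6}. The extra hypothesis "$E(L)[p^\infty]$ finite" is exactly what feeds this: by Corollary 2.1 of \cite{Ozeki} (or the analogue for supersingular curves) the finiteness of $E(L)[p^\infty]$, equivalently of $(V/T)^{G_L}$ for a lattice $T$, controls $H^0(J_V,V)$, and one wants to upgrade this to finiteness of $[M:K(\mu_{p^\infty})]$ using Remark \ref{rem2}'s implication $(3)\Rightarrow(1)$ in the supersingular setting (as alluded to by "Corollary \ref{cor_ss}"). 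Concretely, in case (iv) both $\mathfrak{h}$ and $\mathfrak{h}'$ are $\simeq\mathfrak{sl}_2(\Q_p)$, acting on $V_p(E)$ and $V_p(E')$ respectively through the standard representation; the quotient $\Gal(M/K(\mu_{p^\infty}))$ has Lie algebra a quotient of $\mathfrak{sl}_2(\Q_p)$, hence is either $0$ or all of $\mathfrak{sl}_2(\Q_p)$. If it were all of $\mathfrak{sl}_2$, then $\Gal(K(E_\infty)/K(\mu_{p^\infty}))$ and $\Gal(L/K(\mu_{p^\infty}))$ would, over $M$, both have trivial Lie algebra, forcing (after a finite extension) an isomorphism $\rho_E|_{G_{K(\mu_{p^\infty})}}\cong\rho_{E'}|_{G_{K(\mu_{p^\infty})}}$ up to finite index; I would argue this is incompatible with $E(L)[p^\infty]$ being finite, since $E(L)[p^\infty]$ finite says $V_p(E)$ has no $G_L$-fixed line while an isomorphism of the two mod-center images would, combined with $E'$-torsion in $L$, produce such a line (here one uses that $K(E'_\infty)\supseteq K(\mu_{p^\infty})$ and a Weil-pairing/self-duality computation for the non-existence of invariants). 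Case (iii) is handled the same way but with $\mathfrak{h}$ a $1$-dimensional torus: the quotient $\Lie(\Gal(M/K(\mu_{p^\infty})))$ is then either $0$ — giving finiteness and we are done — or all of the torus, in which case $M$ contains a ramified $\Z_p$-extension coming from the Cartan, again contradicting the finiteness of $E(L)[p^\infty]$ via the FCM structure of Proposition \ref{prop3}(i). Once $[M:K(\mu_{p^\infty})]<\infty$ is established, Lemma \ref{lemma6}(2) finishes the proof; by the symmetry built into the statement of Theorem \ref{thm6}, verifying Theorem \ref{thm4} suffices.
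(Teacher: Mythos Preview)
Your overall strategy matches the paper's: case (i) is Theorem \ref{thm3}, case (ii) is Theorem \ref{main} via the abelian-versus-$\mathfrak{sl}_2$ dichotomy of Proposition \ref{prop3}, and cases (iii)--(iv) go through showing $[M:K(\mu_{p^\infty})]<\infty$ and applying Lemma \ref{lemma6}(2). The paper packages (iii)--(iv) as Lemma \ref{lemma2}, proved exactly along your lines: the ideal $\mathfrak{j}=\Lie(\Gal(K(E_\infty)/M))$ inside $\mathfrak{h}$ is either $0$ or all of $\mathfrak{h}$, and the former is excluded by the finiteness of $E(L)[p^\infty]$.

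The one place you overcomplicate things is the contradiction step in (iii) and (iv). You propose deducing an isomorphism $\rho_E\cong\rho_{E'}$ up to finite index and then arguing via fixed lines and a Weil-pairing computation. None of that is needed: if $\mathfrak{j}=0$ then $K(E_\infty)/M$ is finite, so after replacing $K$ by a finite extension $K(E_\infty)\subseteq L$, and then $E(L)[p^\infty]$ contains \emph{all} of $E[p^\infty]$, which is infinite---this is exactly Remark \ref{rem4} (Ozeki's Lemma 3.1). The paper uses this one-line contradiction in both subcases; your route would work but is circuitous.
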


\begin{remark}\label{rem4}
\normalfont
Ozeki gave necessary and sufficient conditions 
for the finiteness of $E(L)[p^\infty]$ in \emph{(iii)} and \emph{(iv)}. 
See Propositions 3.7 and 3.8 of \cite{Ozeki} for more details. 
In general, if $E$ is an elliptic curve with good supersingular 
reduction over $K$ and $L$ is a Galois extension of $K$, 
the group $E(L)[p^\infty]$ is finite if and only if $K(E_{\infty})$ 
is not contained in $L$ (\emph{op.\ cit.}, Lemma 3.1).
\end{remark}

The case \emph{(i)} of the theorem is already  
covered by Theorem \ref{thm3}. For case \emph{(ii)}, we may replace $K$ 
with a finite extension so that $E$ and $E'$ both 
have good supersingular reduction over $K$. 
Put $\mathfrak{h} = \Lie(\Gal(K(E_\infty)/K(\mu_{p^\infty})))$.
The Lie algebra of $\Gal(L/K)$ 
is isomorphic to 
$\End(V_p(E')) \simeq \mathfrak{gl}_2(\mathbb{Q}_p)$ 
by Proposition \ref{prop3} \emph{(ii)}.
The Lie algebra 
$\mathfrak{h}' = \Lie(\Gal(L/K(\mu_{p^\infty})))$ 
is isomorphic to $\mathfrak{sl}_2({\mathbb{Q}_p})$. 
In particular, $\mathfrak{h}'$ is simple. 
As $\mathfrak{h}$ is abelian, 
we see that $\mathfrak{h}$ and 
$\mathfrak{h}'$ have no common simple factor.
By Theorem \ref{main}, the desired result follows. 

In view of Corollary \ref{cor1}, to prove
the case of $(iii)$ and $(iv)$,
it suffices to show that the field $K(V_p(E)) \cap L$ is a finite 
extension of $K(\mu_{p^\infty})$. 
We obtain this by the following lemma.

\begin{lemma}\label{lemma2} 
Let $E$ and $E'$ be elliptic curves over $K$ 
which have potential good supersingular reduction.
Suppose $E$ and $E'$ both have 
formal complex multiplication 
or both does not have 
formal complex multiplication. 
Assume further that 
$E(L)[p^\infty]$ is a finite group. 
Then $M := K(E_\infty) \cap L$ 
is a finite extension of $K(\mu_{p^\infty})$.
\end{lemma}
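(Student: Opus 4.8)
The plan is to reduce, much as in the proof of Theorem~\ref{thm3}, to a comparison of Lie algebras, and then to feed in the hypothesis that $E(L)[p^\infty]$ is finite through the results of Ozeki.

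First I would replace $K$ by a finite extension so that $E$ and $E'$ both have good supersingular reduction over $K$; by the Weil pairing $K(E_\infty)$ and $L=K(E'_\infty)$ already contain $K(\mu_{p^\infty})$, so $M=K(E_\infty)\cap L$ is a Galois extension of $K(\mu_{p^\infty})$ and $\Gal(M/K(\mu_{p^\infty}))\simeq H_V/J_V$ with $H_V=\rho_E(G_{K(\mu_{p^\infty})})$ and $J_V=\rho_E(G_L)=\Gal(K(E_\infty)/M)$. One must keep track of the hypothesis under this reduction: by Lemma~3.1 of \cite{Ozeki} the finiteness of $E(L)[p^\infty]$ is equivalent to $K(E_\infty)\not\subseteq L$, and this, together with the openness of $\rho_E(I_K)$ (Proposition~\ref{prop3}), is the form I would carry along. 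Now $\mathfrak{j}:=\Lie(J_V)$ is an ideal of $\mathfrak{h}:=\Lie(H_V)$, and by Proposition~\ref{prop3} the algebra $\mathfrak{h}$ is either $\mathfrak{sl}_2(\mathbb{Q}_p)$ (when $E$ has no formal complex multiplication), which is simple, or a non-split Cartan subalgebra of $\mathfrak{sl}_2(\mathbb{Q}_p)$ (when $E$ has formal complex multiplication), which is one-dimensional; in either case $\mathfrak{h}$ has no ideal other than $0$ and $\mathfrak{h}$, so $\mathfrak{j}\in\{0,\mathfrak{h}\}$. If $\mathfrak{j}=\mathfrak{h}$ then $J_V$ is open, hence of finite index, in $H_V$, so $[M:K(\mu_{p^\infty})]<\infty$ and we are done. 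Applying Proposition~\ref{prop3} also to $E'$ (which has good supersingular reduction over $K$ and formal complex multiplication if and only if $E$ does), the algebra $\mathfrak{h}'=\Lie(\Gal(L/K(\mu_{p^\infty})))$ is likewise simple or one-dimensional, so $\mathfrak{j}':=\Lie(\Gal(L/M))\in\{0,\mathfrak{h}'\}$; since $\mathfrak{h}/\mathfrak{j}\simeq\Lie(\Gal(M/K(\mu_{p^\infty})))\simeq\mathfrak{h}'/\mathfrak{j}'$, the vanishing $\mathfrak{j}=0$ forces $\mathfrak{j}'=0$, i.e.\ $K(E_\infty)$ and $K(E'_\infty)$ become commensurable.

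Thus the whole content of the lemma is to exclude the case $\mathfrak{j}=0$, that is, to show that when $E(L)[p^\infty]$ is finite the fields $K(E_\infty)$ and $K(E'_\infty)$ cannot be commensurable; this is the step I expect to be the main obstacle. Commensurability would make $J_V=\rho_E(G_L)$ finite, so that $V_p(E)$ restricted to $G_L$ factors through a finite quotient; in the FCM case $J_V$ would moreover be a finite subgroup of a non-split torus contained in $F^\times$ ($F$ the formal CM field of $E$) acting on the line $V_p(E)$ over $F$, which has no nonzero fixed vector unless $J_V=\{1\}$, i.e.\ unless $K(E_\infty)\subseteq L$. The plan is to derive a contradiction from this together with Ozeki's Lemma~3.1 and, in cases (iii) and (iv) specifically, his more precise Propositions~3.7 and~3.8 of \cite{Ozeki}, which describe the finiteness of $E(L)[p^\infty]$ directly in terms of the relative position of $K(E_\infty)$ and $L$. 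The delicate point — and the reason one needs these refined criteria rather than Lemma~3.1 alone — is that finiteness of the $p$-power torsion of $E$ over $L$ is not evidently preserved under finite extensions of $L$, so one cannot merely pass to a finite extension over which $E$ and $E'$ become isogenous and conclude there.

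Once the case $\mathfrak{j}=0$ has been ruled out we are left with $\mathfrak{j}=\mathfrak{h}$, so $J_V$ has finite index in $H_V$ and $M=K(E_\infty)\cap L$ is a finite extension of $K(\mu_{p^\infty})$, as required; the FCM/non-FCM distinction enters only through the shape of $\mathfrak{h}$ in Proposition~\ref{prop3} and through the torus argument above.
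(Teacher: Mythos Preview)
Your route is the same as the paper's: reduce to the dichotomy $\mathfrak{j}\in\{0,\mathfrak{h}\}$ via Proposition~\ref{prop3}, and then try to rule out $\mathfrak{j}=0$. The paper handles $\mathfrak{j}=0$ by ``replacing $K$ with a finite extension'' so that the finite group $J_V$ becomes trivial, whence $K(E_\infty)\subset L$ and $E(L)[p^\infty]$ is infinite. You are right to flag this step as delicate: enlarging $K$ enlarges $L=K(E'_\infty)$ too, and the hypothesis ``$E(L)[p^\infty]$ finite'' (equivalently $K(E_\infty)\not\subset L$, by Ozeki's Lemma~3.1) is not obviously preserved.

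The difficulty is not merely technical, and your plan to close it with Ozeki's Propositions~3.7 and~3.8 will not go through as stated, because $\mathfrak{j}=0$ genuinely can occur under the lemma's hypothesis. In the non-FCM case, suppose $-I\in\rho_E(G_K)$, pick $d\in K^\times$ with $K(\sqrt d)\not\subset K(E_\infty)$, and set $E'=E^{(d)}$, so $\rho_{E'}=\chi_d\cdot\rho_E$. Then $J_V=\rho_E(\ker\rho_{E'})=\{\pm I\}$: the condition $\rho_{E'}(\sigma)=I$ forces $\rho_E(\sigma)=\chi_d(\sigma)I\in\{\pm I\}$, and since $\chi_d$ is nontrivial on $\ker\rho_E$ one finds $\sigma$ with $\rho_E(\sigma)=-I$ and $\chi_d(\sigma)=-1$. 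Hence $\mathfrak{j}=0$ and $[K(E_\infty):M]=2$, so $M$ is \emph{infinite} over $K(\mu_{p^\infty})$; yet $V_p(E)^{J_V}=0$, so $E(L)[p^\infty]$ is finite. Thus the hypothesis does not exclude $\mathfrak{j}=0$, and the conclusion of the lemma fails in this example. What does survive is the dichotomy ``$J_V$ is open in $H_V$ or $J_V$ is finite''; in the finite case one still gets vanishing $J_V$-cohomology directly ($H^0=0$ by hypothesis, and $H^i=0$ for $i\ge 1$ since $|J_V|$ acts invertibly on the $\mathbb{Q}_p$-vector space $V$), which is all that is needed for Theorem~\ref{thm4}, though not for the lemma as written.
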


\begin{proof}
We split the proof into two cases: \\
(Case 1) Assume that both $E$ and $E'$ have 
formal complex multiplication.
The Lie algebra $\Lie(\rho_E(G_K))$ 
attached to $E$ is $2$-dimensional, by
Proposition \ref{prop3} \emph{(i)}. 
Thus $\Gal(K(E_\infty)/K)$ is 
a $2$-dimensional $p$-adic Lie group and so 
$\Gal(K(E_\infty)/K(\mu_{p^\infty}))$ is 
$1$-dimensional. 
The same statements hold when $E$ is replaced
by $E'$. 
Replacing $K$ with a finite extension, we may
assume that $\Gal(K(E_\infty)/K(\mu_{p^\infty}))$
is isomorphic to $\mathbb{Z}_p$. If 
$M$ is infinite over $K(\mu_{p^\infty})$, 
then $\Gal(K(E_\infty)/M)$ is of infinite index 
in $\Gal(K(E_\infty)/K(\mu_{p^\infty}))$.
Since the only closed subgroup of $\mathbb{Z}_p$ of 
infinite index is the trivial subgroup, the group 
$\Gal(K(E_\infty)/M)$ must be trivial, and thus $K(E_\infty) = M$.  
That is, $K(E_\infty)$ is contained in $L$. 
Hence, $E(L)[p^\infty]$ is infinite (see Remark \ref{rem4}). 
This contradicts our hypothesis. Therefore, $M$ is a finite 
extension of $K(\mu_{p^\infty})$. \\
(Case 2) Suppose both $E$ and $E'$ does not
have formal complex multiplication.
Put $\mathfrak{g} = \Lie(\Gal(K(E_\infty)/K))$ and 
$\mathfrak{h} = \Lie(\Gal(K(E_\infty)/K(\mu_{p^\infty})))$. 
Then $\mathfrak{g}$ (resp.\ $\mathfrak{h}$) is 
isomorphic to $\mathfrak{gl}_2 (\mathbb{Q}_p)$ 
(resp.\ $\mathfrak{sl}_2 (\mathbb{Q}_p)$). 
In particular, $\mathfrak{h}$ is simple. 
The Lie algebra $\mathfrak{j} = \Lie(\Gal(K(E_\infty)/M))$ 
is an ideal of $\mathfrak{h}$ since 
$\Gal(K(E_\infty)/M)$ is a normal subgroup of 
$\Gal(K(E_\infty)/K(\mu_{p^\infty}))$.
Thus $\mathfrak{j}$ is either $(0)$ or 
$\mathfrak{sl}_2 (\mathbb{Q}_p)$. 
In the former case, $\Gal(K(E_\infty)/M)$
is a finite group and thus $K(E_\infty)/M$ is a finite extension. 
Replacing $K$ with a finite extension, we have 
$K(E_\infty)= M \subset L$. But then 
this implies $E(L)[p^\infty]$ is infinite,
in contrast to our hypothesis. Thus 
$\mathfrak{j} = \mathfrak{sl}_2 (\mathbb{Q}_p)$, 
which means that $\Gal(K(E_\infty)/M)$ is 
an open subgroup of $\Gal(K(E_\infty)/K(\mu_{p^\infty}))$. 
This completes the proof of the lemma and of 
Theorem \ref{thm4}.
\end{proof}

Theorem \ref{thm4} gives another proof of some 
finiteness results in $\cite{Ozeki}$. For instance, in view 
of Remark \ref{rem2}, condition $(i)$ of Theorem \ref{thm4} 
implies a part of Proposition 3.2 in \cite{Ozeki}. 
We also obtain the following corollary. 

\begin{cor}\label{cor_ss} 
Let $E$ and $E'$ be elliptic curves with potential 
good supersingular reduction over $K$, $L = K(E'_\infty)$ and $L' = K(E_\infty)$. 
Put $V=V_p(E)$, $V'=V_p(E')$, $J_V = \rho_E(G_L)$ and $J_{V'} = \rho_{E'}(G_{L'})$. Assume that $E$ and $E'$ 
both have formal complex multiplication or 
both do not have formal complex multiplication. 
Then the following statements are equivalent:
\\
(1) 
$V$ and $V'$ are cohomologically 
coprime 
\\
(2) 
$L \cap L'$ is a finite extension
of $K(\mu_{p^\infty})$,
\\
(3) 
$V$ has vanishing $J_V$-cohomology,
\\
(3') 
$V'$ has vanishing $J_{V'}$-cohomology, 
\\
(4)
$E(L'')[p^\infty]$ is a finite group 
for any finite extension $L''$ of $L$,
\\
(4')
$E'(L'')[p^\infty]$ is a finite group
for any finite extension $L''$ of $L'$, and
\\
(5)
The $p$-divisible groups $\mathcal{E}(p)$ 
and $\mathcal{E}'(p)$
attached to $E$ and $E'$, respectively, 
are not isogenous over $\mathcal{O}_{K'}$ 
for any finite extension $K'$ of $K$. 
\end{cor}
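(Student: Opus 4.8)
The plan is to prove all the listed statements equivalent by closing a single cycle of implications through them, together with the two auxiliary finiteness conditions $(\mathrm{fin})$: ``$E(L)[p^\infty]$ is finite'' and $(\mathrm{fin}')$: ``$E'(L')[p^\infty]$ is finite'', which are already implicit in the list. First I would record the standard identifications: since $L' = K(E_\infty) = K(V)$ and $L = K(E'_\infty) = K(V')$, the fixed field of $J_V = \rho_E(G_L)$ inside $K(E_\infty)$ is $M := K(E_\infty) \cap L = L \cap L'$, which contains $K(\mu_{p^\infty})$ by the Weil pairing; thus $(2)$ asserts exactly that $[M : K(\mu_{p^\infty})] < \infty$, and moreover $J_V$ (resp.\ $J_{V'}$) is precisely the group $\rho_E(\Ker\rho_{E'})$ (resp.\ $\rho_{E'}(\Ker\rho_E)$) appearing in Definition \ref{defn_cohom_coprime}, so that $(1)$ is the conjunction of $(3)$ and $(3')$.

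The core of the proof is the cycle $(2)\Rightarrow(4)\Rightarrow(\mathrm{fin})\Rightarrow(2)$. For $(2)\Rightarrow(4)$: if $[M:K(\mu_{p^\infty})]<\infty$, then $J_V$ has finite index in $H_V$ by Remark \ref{rem1}; for any finite extension $L''/L$ the group $\rho_E(G_{L''})$ has finite index in $J_V$, hence in $H_V$, and $L'' \supseteq L \supseteq K(\mu_{p^\infty})$, so Lemma \ref{lemma6}(2) shows that $V$ has vanishing $\rho_E(G_{L''})$-cohomology; in particular $H^0(\rho_E(G_{L''}),V)=0$, which by \cite{KT}, Lem.\ 2.1 (cf.\ Remark \ref{rem2}) is equivalent to the finiteness of $(V_p(E)/T_p(E))^{G_{L''}} = E(L'')[p^\infty]$, giving $(4)$. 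The implication $(4)\Rightarrow(\mathrm{fin})$ is the case $L'' = L$. Finally $(\mathrm{fin})\Rightarrow(2)$ is exactly Lemma \ref{lemma2}, whose hypotheses --- $E$ and $E'$ with potential good supersingular reduction, both having or both lacking formal complex multiplication, and $E(L)[p^\infty]$ finite --- are those of the present corollary, and whose conclusion is that $M$ is finite over $K(\mu_{p^\infty})$. This closes the loop, so $(2)\Leftrightarrow(4)\Leftrightarrow(\mathrm{fin})$.

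It remains to attach the other conditions. For $(2)\Rightarrow(1)$, apply Corollary \ref{cor1} with $V_1 = V_p(E)$ and $V_2 = V_p(E')$: here $M\cap K(\mu_{p^\infty}) = K(\mu_{p^\infty})$, so $(2)$ is literally the hypothesis of that corollary. Next, $(1)\Rightarrow(3)$ is immediate from Definition \ref{defn_cohom_coprime}, while $(3)\Rightarrow(\mathrm{fin})$ holds because vanishing $J_V$-cohomology forces $H^0(J_V,V)=0$, again equivalent to $(\mathrm{fin})$ by \cite{KT}, Lem.\ 2.1. The primed statements $(3'),(4'),(\mathrm{fin}')$ then enter through the symmetric chain $(1)\Rightarrow(3')\Rightarrow(\mathrm{fin}')\Rightarrow(2)$, the last step being Lemma \ref{lemma2} with $E$ and $E'$ interchanged (using that $M = K(E'_\infty)\cap K(E_\infty)$ and that the hypotheses are symmetric). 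For $(5)$: an isogeny $\mathcal{E}(p)\sim\mathcal{E}'(p)$ over $\mathcal{O}_{K'}$ for a finite $K'/K$ induces, by Tate's isogeny theorem, an isomorphism $V_p(E)\cong V_p(E')$ of $G_{K'}$-representations, forcing $K'K(E_\infty) = K'K(E'_\infty)$ and hence $[M:K(\mu_{p^\infty})]=\infty$, the negation of $(2)$; conversely, $(\mathrm{fin})\Rightarrow(5)$ is Ozeki's Propositions 3.7 and 3.8 (quoted in Remark \ref{rem4}), which in exactly these two cases characterize the finiteness of $E(L)[p^\infty] = E(K(E'_\infty))[p^\infty]$ by the non-existence of such an isogeny. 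This gives $(5)\Leftrightarrow(\mathrm{fin})$ and completes the proof.

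The step I expect to carry the real weight is $(\mathrm{fin})\Rightarrow(2)$ --- but that is Lemma \ref{lemma2}, already established --- everything else being bookkeeping with results proved above. The one genuinely external input is the equivalence $(\mathrm{fin})\Leftrightarrow(5)$ via Ozeki's Propositions 3.7--3.8; its substance, especially in the formal-CM case, is to show that two Lubin--Tate $p$-divisible groups of height $2$ with the same $p$-power division field over $K$ must actually become isogenous over a finite extension, rather than merely acquiring abstractly isomorphic image groups. A self-contained treatment would have to confront that point, which is the only place where more than formal manipulation is needed.
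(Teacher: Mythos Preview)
Your proposal is correct and well-organized; in fact you spell out the equivalences among $(1)$--$(4')$ more explicitly than the paper, which simply declares ``It remains to prove the equivalence of each of the first six conditions with the last one'' and devotes the entire written proof to $(4)\Leftrightarrow(5)$. Your cycle $(2)\Rightarrow(4)\Rightarrow(\mathrm{fin})\Rightarrow(2)$ via Lemma~\ref{lemma6}(2) and Lemma~\ref{lemma2}, together with Corollary~\ref{cor1} for $(2)\Rightarrow(1)$, is exactly the bookkeeping the paper leaves implicit.

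The one substantive difference is in the FCM half of $(5)\Leftrightarrow(4)$. You attribute both directions to Ozeki's Propositions~3.7--3.8, but (as the paper itself uses it) Proposition~3.7 only yields that if $E(L'')[p^\infty]$ is infinite then $E$ and $E'$ share the \emph{same} formal complex multiplication field $F$ --- not directly the isogeny. The paper then supplies the bridge you flag in your last paragraph: once the FCM fields coincide, both $\rho_E|_{I_K}$ and $\rho_{E'}|_{I_K}$ land in $\mathcal{O}_F^\times$ and are given by the same norm map $x\mapsto\mathrm{Nr}_{K/F}(x^{-1})$ on $I(K^{\mathrm{ab}}/K)\simeq\mathcal{O}_K^\times$ (Serre, Chap.~IV, A.2.2); since $\rho_E(I_K)$ is open in $\rho_E(G_K)$ by Proposition~\ref{prop3}(i), this forces $T_p(E)\cong T_p(E')$ over some finite $K'/K$, and Tate's theorem gives the isogeny $\mathcal{E}(p)\sim\mathcal{E}'(p)$ over $\mathcal{O}_{K'}$. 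So the point you correctly isolate as ``the only place where more than formal manipulation is needed'' is precisely what the paper proves in-house rather than outsourcing to \cite{Ozeki}. Your argument for the converse $\lnot(5)\Rightarrow\lnot(2)$ via Tate matches the paper's $\lnot(5)\Rightarrow\lnot(4)$.

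One small wording slip: where you write ``conversely, $(\mathrm{fin})\Rightarrow(5)$ is Ozeki's Propositions~3.7 and~3.8'', the direction you actually still need at that point is $(5)\Rightarrow(\mathrm{fin})$ (equivalently $\lnot(4)\Rightarrow\lnot(5)$), since you have already obtained $(\mathrm{fin})\Rightarrow(5)$ from $\lnot(5)\Rightarrow\lnot(2)$ and the established cycle.
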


\begin{proof}
It remains to prove the equivalence of 
each of the first six conditions with the last one.
Replacing $K$ by a finite extension, we may
assume that $E$ and $E'$ have good supersingular
reduction over $K$.
We prove the equivalence $(4) \Leftrightarrow (5)$.
If $E$ and $E'$ both do not have 
formal complex multiplication 
then this equivalence 
is given by Proposition 3.8 in \cite{Ozeki}.
Assume that $E$ and $E'$ both have 
formal complex multiplication. 
Let $L''$ be a finite extension of $L$
such that $E(L'')[p^\infty]$ is infinite.
Replacing $K$ by a finite extension,
we may assume that $L=L''$.
Then Proposition 3.7 in \cite{Ozeki} 
implies that $E$ and $E'$ have
the same fields of formal complex multiplication,
say $F$. 
The representations
$\rho_E : G_K \rightarrow \GL(V_p(E))$ and 
$\rho_{E'} : G_K \rightarrow \GL(V_p(E'))$
factor through $\Gal(K^{\mathrm{ab}}/K)$,
where $K^{\mathrm{ab}}$ denotes the 
maximal abelian extension of $K$. 
Moreover $\rho_E$ and $\rho_{E'}$ both have values 
in $\mathcal{O}^{\times}_{F}$ 
and their restrictions to the inertia group
are respectively given by
\[ \rho_E|_{I_K}, \rho_{E'}|_{I_K} 
: I(K^{\mathrm{ab}}/K)  \simeq \mathcal{O}^{\times}_{K} 
\rightarrow \mathcal{O}^{\times}_{F}. \]
Here, $K^{\mathrm{ab}}$ denotes the 
maximal abelian extension of $K$ and 
$I(K^{\mathrm{ab}}/K)$ is the
inertia subgroup of
$\Gal(K^{\mathrm{ab}}/K)$, with the isomorphism 
$I(K^{\mathrm{ab}}/K) \simeq \mathcal{O}_{K^\times}$
coming from local class field theory.
In fact, $\rho_E|_{I_K}$ and $\rho_{E'}|_{I_K} $
are equal since they are both given
by the map 
$x \mapsto \mathrm{Nr}_{K/F}(x^{-1})$, 
where 
$\mathrm{Nr}_{K/F} : K^{\times} \rightarrow F^{\times}$
is the norm map (cf.\ \cite{Ser1}, Chap.\ IV, A.2.2). 
From Proposition \ref{prop3} (\emph{i}), 
we know that $\rho_E(I_K)$ (resp. $\rho_{E'}(I_K)$) 
is an open subgroup of $\rho_E(G_K)$ 
(resp. $\rho_{E'}(G_K)$).
This, together with the assumption that 
$E(L)[p^\infty]$ is infinite implies that
$\rho_E(I_{K'}) = \rho_E(G_{K'}) = \rho_{E'}(G_{K'}) = \rho_{E'}(I_{K'})$
after a finite extension $K'/K$.
We see that the Tate modules $T_p(E)$ and $T_p(E')$
become isomorphic over $K'$.
By a well-known result due to Tate 
(cf.\ \cite{Tate}, Corollary 1),
the $p$-divisible groups $\mathcal{E}(p)$ 
and $\mathcal{E}'(p)$ are isogenous over $\mathcal{O}_{K'}$. Conversely, if there
exists a finite extension $K'$ over $K$ such that $\mathcal{E}(p)$ 
and $\mathcal{E}'(p)$ are isogenous over 
$\mathcal{O}_{K'}$, then $T_p(E)$ and 
$T_p(E')$ are isomorphic over $K'$ which 
shows that $K'(E_\infty) = K'(E'_\infty)$ which 
is a finite extension of $L$.
Therefore we obtain a 
finite extension $L''$ of $L$
such that $E(L'')[p^\infty]$ is infinite.
\end{proof}

\subsection{The case of multiplicative reduction}\label{subsec:multip reduction}

We now treat the case where $E'$ has potential multiplicative reduction 
over $K$. For this case, the Lie algebra of $\Gal(K(E'_\infty)/K)$ 
is given by Proposition \ref{prop3} \emph{(v)}. 
We have the following result.

\begin{thm}\label{thm5}		
Let $E$ and $E'$ be elliptic curves over $K$ such that 
$E$ has potential good reduction over $K$ and
$E'$ has potential multiplicative reduction over $K$. 
Put $L = K(E'_\infty)$. Then $V = V_p(E)$ has vanishing 
$J_V$-cohomology, where $J_V = \rho_E (G_L)$.
\end{thm}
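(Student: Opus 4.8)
The plan is to deduce the theorem from the single claim that $M := K(E_\infty)\cap L$ is a \emph{finite} extension of $K(\mu_{p^\infty})$. Indeed, $L=K(E'_\infty)$ contains $K(\mu_{p^\infty})$ by the Weil pairing, so $J_V=\rho_E(G_L)=\Gal(K(E_\infty)/M)$ is then of finite index in $H_V=\Gal(K(E_\infty)/K(\mu_{p^\infty}))$, and the vanishing of $J_V$-cohomology follows immediately from Lemma \ref{lemma6}(2). As usual I first replace $K$ by a finite extension so that $E$ has good reduction and $E'$ has split multiplicative reduction. By Proposition \ref{prop3}(v) the $G_K$-stable line $W\subset V_p(E')$ is isomorphic to a twist of $\mathbb{Q}_p(1)$ by an unramified character of order at most $2$, which we absorb into the extension; choosing a basis of $T_p(E')$ adapted to $W$ one gets $\rho_{E'}=\left(\begin{smallmatrix}\chi & c\\ 0 & 1\end{smallmatrix}\right)$ with $c\colon G_K\to\mathbb{Z}_p$ a continuous map. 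Hence, after one further finite extension, $\mathscr{H}':=\Gal(L/K(\mu_{p^\infty}))$ is isomorphic to $\mathbb{Z}_p$, and $\Gal(K(\mu_{p^\infty})/K)$ acts on it by conjugation through the cyclotomic character $\chi$ (this is the usual action on $\mathbb{Z}_p(1)$ coming from Tate uniformization).

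Now observe that $\Gal(M/K(\mu_{p^\infty}))$ is at once a quotient of the abelian group $\mathscr{H}'$—so it is isomorphic to $\mathbb{Z}/p^n$ or to $\mathbb{Z}_p$, and $\Gal(K(\mu_{p^\infty})/K)$ acts on it through $\chi$—and a quotient of $H_V$, hence of $H_V^{\mathrm{ab}}=H_V/\overline{[H_V,H_V]}$, compatibly with the conjugation action of $\Gal(K(\mu_{p^\infty})/K)=G_V/H_V$ (an inner automorphism of $H_V$ acts trivially on the abelian quotient $\Gal(M/K(\mu_{p^\infty}))$, so this action is well defined, and it only involves the Galois action on $M$, independently of which covering field is used to lift). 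The heart of the argument is to show, using Proposition \ref{prop3} and distinguishing the reduction type of $E$, that $\Gal(K(\mu_{p^\infty})/K)$ acts on $H_V^{\mathrm{ab}}$ through a \emph{finite} quotient:
\begin{itemize}
\item[(a)] if $E$ has good supersingular reduction without formal complex multiplication, then by Proposition \ref{prop3}(ii) the group $H_V$ is open in $\mathrm{SL}_2(\mathbb{Z}_p)$, with perfect Lie algebra $\mathfrak{sl}_2(\mathbb{Q}_p)$, so $H_V^{\mathrm{ab}}$ is already finite;
\item[(b)] if $E$ has good supersingular reduction with formal complex multiplication, or good ordinary reduction with complex multiplication, then by Proposition \ref{prop3}(i),(iii)—after a finite extension ensuring the (formal) complex multiplication is defined over $K$—the image $\rho_E(G_K)$ lies in a commutative Cartan subgroup, so $G_V$ acts trivially on $H_V$, a fortiori on $H_V^{\mathrm{ab}}$;
\item[(c)] if $E$ has good ordinary reduction without complex multiplication, then by Proposition \ref{prop3}(iv) $\Lie(H_V)$ is the trace-zero part $\left\{\left(\begin{smallmatrix}a & b\\ 0 & -a\end{smallmatrix}\right)\right\}$ of the relevant Borel subalgebra, whose derived subalgebra is the one-dimensional nilpotent part; a direct computation of $\operatorname{Ad}(\rho_E(g))$ shows it fixes $\Lie(H_V)/[\Lie(H_V),\Lie(H_V)]$ pointwise for every $g$, so the action of $\Gal(K(\mu_{p^\infty})/K)$ on $H_V^{\mathrm{ab}}$ is trivial on its Lie algebra and hence factors through a finite quotient.
\end{itemize}

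Granting this, choose an open subgroup $U\le\Gal(K(\mu_{p^\infty})/K)$ acting trivially on $H_V^{\mathrm{ab}}$; then $U$ acts trivially on the quotient $\Gal(M/K(\mu_{p^\infty}))$. On the other hand the very same action, read off through the surjection from $\mathscr{H}'$, is multiplication by $\chi(U)$, an open—hence infinite—subgroup of $\mathbb{Z}_p^\times$; since multiplication by a unit $\neq 1$ acts nontrivially on $\mathbb{Z}_p$, the group $\Gal(M/K(\mu_{p^\infty}))$ cannot be isomorphic to $\mathbb{Z}_p$, so it is finite, which is exactly the claim. The main obstacle is this middle case analysis: one has to extract from Proposition \ref{prop3}, in each reduction type, both the structure of $H_V$ and the conjugation action of $\Gal(K(\mu_{p^\infty})/K)$ on its abelianization, and one must be careful that the two incarnations of the conjugation action on $\Gal(M/K(\mu_{p^\infty}))$—via $\mathscr{H}'$ and via $H_V^{\mathrm{ab}}$—really coincide. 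I note finally that in case (a) one may avoid the direct argument altogether: $\Lie(\mathscr{H}')$ is one-dimensional abelian while $\mathfrak{sl}_2(\mathbb{Q}_p)$ is simple, so they have no common simple factor and Theorem \ref{main} applies directly.
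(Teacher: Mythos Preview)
Your argument is correct and takes a genuinely different route from the paper. The paper handles the reduction types of $E$ by three separate mechanisms: for potential good ordinary $E$ it appeals to Theorem \ref{thm2A}, whose proof uses the Hochschild--Serre spectral sequence together with the unipotence Lemma \ref{lemA}; for supersingular $E$ without FCM it invokes Theorem \ref{main} directly (simple $\mathfrak{sl}_2$ versus the abelian $\mathfrak{h}_2$); and for supersingular $E$ with FCM it shows that $M$ infinite over $K(\mu_{p^\infty})$ would force $K(E_\infty)\subset L$ and hence an isomorphism between two non-isomorphic $2$-dimensional Lie algebras (Proposition \ref{prop3}(i) versus (v)). By contrast, you give one uniform mechanism: the conjugation action of $\Gal(K(\mu_{p^\infty})/K)$ on $\Gal(M/K(\mu_{p^\infty}))$ is multiplication by $\chi$ when read through $\mathscr{H}'$, but factors through a finite quotient when read through $H_V^{\mathrm{ab}}$, and these are incompatible unless the group is finite. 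Your case (c), in particular, proves that $M/K(\mu_{p^\infty})$ is \emph{finite} even in the ordinary non-CM situation---a strictly stronger conclusion than the paper establishes there (cf.\ Remark \ref{rem2})---and bypasses the spectral-sequence machinery of Theorem \ref{thm2A} entirely. The paper's approach has the advantage of reusing results already in hand, so the proof of Theorem \ref{thm5} itself is short; your approach is more self-contained but requires the explicit adjoint computation in (c) and the care you rightly flag about identifying the two conjugation actions on $\Gal(M/K(\mu_{p^\infty}))$.
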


\begin{proof}
Replace $K$ with a finite extension so that 
$E$ and $E'$ have good and multiplicative 
reductions over $K$, respectively.
We first note that the residue field $k_L$ of $L$ 
is a potential prime-to-$p$ extension of $k$ since 
$\Lie(\rho_{E'}(G_K)) = \Lie (\rho_{E'}(I_K))$.
Thus, the case where $E$ has good ordinary reduction 
is just a consequence of Theorem \ref{thm2A}.
It remains to settle the case where $E$ has
good supersingular reduction over $K$.
If $E$ has no formal complex multiplication, 
note that the Lie algebra 
$\mathfrak{h}_1 = \Lie(\Gal(K(E_\infty) / K(\mu_{p^\infty}))) \simeq \mathfrak{sl}_2(\mathbb{Q}_p)$ 
is simple. On the other hand, the Lie algebra
$\mathfrak{h}_2 = \Lie(\Gal(K(E'_\infty) / K(\mu_{p^\infty})))$
is a abelian. The result follows from Theorem \ref{main}.
If $E$ has formal complex multiplication, 
then by virtue of Corollary \ref{cor1}, 
it suffices to prove that $M := L \cap K(E_\infty)$ is a 
finite extension of $K(\mu_{p^\infty})$. 
The Lie algebra $\Lie(\rho_E(G_K))$ 
attached to $E$ is $2$-dimensional, by
Proposition \ref{prop3} \emph{(i)}. 
Thus $\Gal(K(E_\infty)/K)$ is 
a $2$-dimensional $p$-adic Lie group and so 
$\Gal(K(E_\infty)/K(\mu_{p^\infty}))$ is 
$1$-dimensional. 
As in the proof for Case (1) of 
Lemma \ref{lemma2}, if we assume that $M$ is of 
infinite degree over $K(\mu_{p^\infty})$, then 
the group $\Gal(K(E_\infty)/M)$ must be trivial, 
and thus $K(E_\infty) = M$.  
That is, $K(E_\infty)$ is contained in $L$.
Thus we have a natural surjection 
$\Gal(L/K) \twoheadrightarrow \Gal(K(E_\infty)/K)$
which induces a surjection of Lie algebras
$\Lie(\Gal(L/K)) \twoheadrightarrow \Lie(\Gal(K(E_\infty)/K))$.
Since both Lie algebras are two-dimensional, 
the above surjection of Lie algebras must be an isomorphism.
In view of Proposition \ref{prop3}
\emph{(i)} and \emph{(v)}, we have a contradiction. 
Therefore, $M$ is a finite extension of 
$K(\mu_{p^\infty})$.
\end{proof}

\begin{remark}\label{cohom-coprime-mult}
\normalfont
Despite the above result, we cannot 
expect much about the cohomological coprimality
of $V=V_p(E)$ and $V'=V_p(E')$ if at least
one of $E$ and $E'$ has multiplicative reduction 
over $K$. For instance if $E'$ has split
multiplicative reduction, the theory
of Tate curves shows that 
$H^0(H_{V'}, V')$ is non-trivial.
On the other hand if $E'$ has non-split
multiplicative reduction, we are not
certain if all the $J_{V'}$-cohomology 
groups of $V'$ vanish or not.
(But see Proposition 3.10 in \cite{Ozeki} for 
conditions where $H^0(J_{V'}, V')$
vanishes).
\end{remark}

\section{Cohomologies of Global Representations}

In this section, we discuss some global 
analogues of the results we obtained in
the previous sections. In fact, these are
consequences of the local results that we proved.
Let $F$ be an algebraic number field, 
that is, a finite extension of $\Q$.
We now consider a proper smooth algebraic 
variety $X$ defined over $F$. 
Denote by $X_{\overline{F}}$ the extension
of scalars of $X$ to $\overline{F}$.
In this section we take
\[ V = H^{i}_{\text{\'et}} (X_{\overline{F}}, \Q_p) \] 
and again denote by $\rho : G_F \rightarrow \GL(V)$ the 
continuous homomorphism giving the action of $G_F$
on $V$. As in the previous sections if 
$V = V_p(E) = T_p(E) \otimes_{\Z_p} \Q_p$ is 
given by an elliptic curve $E$ over $F$, we write $\rho_E$
instead of $\rho$. 
For an algebraic extension $L$ of $F$ 
and a place $w$ of $L$,
we denote by $L_w$ the union of the 
completions at $w$ of all finite 
extensions of $F$ contained in $L$ .

\begin{thm}\label{global_var} 
Let $X$ be a proper smooth variety and $E$ be an elliptic curve 
over $F$. Suppose there is at least one place $v$ of $F$ 
above $p$ such that $X$ has potential good ordinary reduction at 
$v$ and $E$ has potential good supersingular reduction at $v$.
Let $i$ be a positive odd integer. Put $V = H^{i}_{\text{\'et}} (X_{\overline{F}}, \Q_p)$
and $V' = V_p(E)$. Then $V$ and $V'$ are cohomologically coprime.
\end{thm}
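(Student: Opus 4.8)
The plan is to descend to the local statement, Theorem~\ref{thm3}, at the distinguished place $v$, and then to transport the resulting Lie-algebra information from the decomposition group at $v$ up to the full global image. By Definition~\ref{defn_cohom_coprime}, writing $F(V)$ for the fixed field of $\Ker\rho$ in $\overline{F}$ and putting $\mathcal{G}=\rho(G_{F(E_\infty)})$ and $\mathcal{G}'=\rho_E(G_{F(V)})$, it suffices to prove that $V$ has vanishing $\mathcal{G}$-cohomology and that $V'$ has vanishing $\mathcal{G}'$-cohomology. Replacing $F$ by a finite extension replaces $\mathcal{G}$ and $\mathcal{G}'$ by open subgroups, which by the Coates--Sujatha--Wintenberger mechanism recalled in the Introduction does not affect the vanishing of these cohomology groups; so I may enlarge $F$ at will, and by Lemma~\ref{lemma1} (as in the proof of Theorem~\ref{main}) may assume $F(V)\supseteq F(\mu_{p^\infty})$, while $F(E_\infty)\supseteq F(\mu_{p^\infty})$ always holds by the Weil pairing. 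Then $\mathcal{G}$ and $\mathcal{G}'$ are closed normal subgroups of $\rho(G_{F(\mu_{p^\infty})})$ and $\rho_E(G_{F(\mu_{p^\infty})})$, respectively.

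Next I would localize. Fix a place $w$ of $\overline{F}$ above $v$ and identify $G_{F_v}$ with the corresponding decomposition subgroup of $G_F$; then $\rho|_{G_{F_v}}$ and $\rho_E|_{G_{F_v}}$ are the $p$-adic representations attached to $X_{F_v}$ and $E_{F_v}$, which by hypothesis have potential good ordinary and potential good supersingular reduction. Theorem~\ref{thm3} applied over $F_v$ shows that $V|_{G_{F_v}}$ and $V'|_{G_{F_v}}$ are cohomologically coprime; unwinding, $V|_{G_{F_v}}$ has vanishing $\mathcal{G}_v$-cohomology, where $\mathcal{G}_v:=\rho(G_{F_v}\cap G_{F(E_\infty)})$, and $V'|_{G_{F_v}}$ has vanishing $\mathcal{G}'_v$-cohomology with $\mathcal{G}'_v:=\rho_E(G_{F_v}\cap G_{F(V)})$. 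Crucially, tracing through the proof of Theorem~\ref{thm3} --- which runs via Theorem~\ref{main} and Lemma~\ref{lemma6}, hence reduces after a finite extension to a situation where $\mathcal{G}_v$ has finite index in $\rho(G_{F_v}\cap G_{F(\mu_{p^\infty})})$ and the vanishing comes from Theorem~\ref{thm0} --- one sees that the ``special operator'' of \cite{CSW}, i.e.\ the semisimple element $\phi\in\Lie(\rho(G_{F_v}\cap G_{F(\mu_{p^\infty})}))\otimes_{\Q_p}\overline{\Q_p}$ which is invertible on $V$ and satisfies Serre's criterion \cite{Ser3} for the vanishing of Lie-algebra cohomology with values in $V$, in fact already lies in $\mathfrak{j}_v:=\Lie(\mathcal{G}_v)\otimes_{\Q_p}\overline{\Q_p}$. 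Symmetrically there is such an operator $\phi'\in\mathfrak{j}'_v:=\Lie(\mathcal{G}'_v)\otimes_{\Q_p}\overline{\Q_p}$ witnessing the local vanishing for $V'$.

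Finally I would transfer this to the global groups. As $\mathcal{G}_v$ is a closed subgroup of $\mathcal{G}$, we have $\mathfrak{j}_v\subseteq\mathfrak{j}:=\Lie(\mathcal{G})\otimes_{\Q_p}\overline{\Q_p}$, and hence $\phi\in\mathfrak{j}$; once one knows that $\phi$ still realises Serre's criterion inside the larger algebra $\mathfrak{j}$, Lazard's theorem gives $H^n(\mathcal{G},V)=0$ for all $n\geq 0$, and running the same argument with $(V,\rho)$ and $(V',\rho_E)$ interchanged gives $H^n(\mathcal{G}',V')=0$ for all $n$, so that $V$ and $V'$ are cohomologically coprime. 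I expect this last point to be the crux: $\mathfrak{j}=\Lie(\rho(G_{F(E_\infty)}))\otimes\overline{\Q_p}$ is a priori strictly larger than $\mathfrak{j}_v$, the adjoint action of $\phi$ on it carries more eigenvalues than before, and Serre's criterion --- automatic if $\phi$ acted on $V$ as a nonzero scalar, but not in general --- can a priori fail to survive under enlarging the Lie algebra. The resolution is to exploit that $\phi$ is of Frobenius/weight type, intrinsic to $V$ and to the crystalline structure of $V|_{G_{F_v}}$ rather than to the ambient group, and that, since $F(E_\infty)\supseteq F(\mu_{p^\infty})$, the algebra $\mathfrak{j}$ is an ideal of $\Lie(\rho(G_{F(\mu_{p^\infty})}))\otimes\overline{\Q_p}$; one then checks that these constraints force $H^n(\mathfrak{j},V)=0$ for all $n$. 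One cannot shortcut this via Theorem~\ref{thm0} applied over $F$, since the global intersection $F(V)\cap F(E_\infty)$ need not be finite over $F(\mu_{p^\infty})$.
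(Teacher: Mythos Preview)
Your approach is essentially the paper's: localize at $v$, use the proof of Theorem~\ref{thm3} to see that the decomposition image $\mathcal{G}_v$ is open in $\rho(G_{F_v}\cap G_{F(\mu_{p^\infty})})$ (equivalently, the local intersection $L_w\cap L'_\nu$ is finite over $F_v(\mu_{p^\infty})$), so the special operator of \cite{CSW} lies in $\Lie(\mathcal{G}_v)\otimes\overline{\Q_p}$, hence in $\Lie(\mathcal{G})\otimes\overline{\Q_p}$, and then invoke \cite{CSW}.

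Where you diverge from the paper is in treating the passage from $\mathfrak{j}_v$ to $\mathfrak{j}$ as the crux. It is not: the \emph{strong Serre criterion} of \cite{CSW} is deliberately formulated so that it depends only on the eigenvalues of $\phi$ on $V$, not on the ambient Lie algebra. Since any Lie subalgebra of $\End(V)$ has adjoint eigenvalues for $\phi$ among the differences $\lambda_i-\lambda_j$ of its eigenvalues on $V$, the criterion is automatically inherited by any larger Lie subalgebra of $\End(V)$ containing $\phi$. The paper therefore disposes of this step in one line: $\Lie(J_V)_{\overline{\Q_p}}\supset\Lie(D_V)_{\overline{\Q_p}}$ also satisfies the strong Serre criterion, and Proposition~2.3 of \cite{CSW} finishes. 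Your appeal to ``Frobenius/weight type'' and the ideal property of $\mathfrak{j}$ is unnecessary; just cite \cite{CSW} directly.
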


\begin{thm}\label{global_ellcur} 
Let $E$ and $E'$ be elliptic curves 
over $F$. Put $L=F(E'_\infty)$. 
Then $V_p(E)$ and $V_p(E')$
are cohomologically coprime if there is at least one 
place $v$ of $F$ above $p$ such that one of the 
following conditions is satisfied:
	\begin{itemize}
		\item[(i)] $E$ has potential good ordinary reduction at 
		$v$	and $E'$ has potential good supersingular reduction at $v$,
		or vice versa;	
		\item[(ii)] $E$ has potential good supersingular reduction at 
		$v$ with formal complex multiplication and $E'$ has potential 	
		good supersingular reduction at $v$ without formal complex 
		multiplication, or vice versa;
		\item[(iii)] $E$ and $E'$ both have potential good 
		supersingular reduction at $v$ 
		with formal complex multiplication and the group 
		$E(L_w)[p^\infty]$ of $p$-power division points of $E$ 
		over $L_w$ is finite for a place $w$ of $L$ 
		lying above $v$;
		\item[(iv)] $E$ and $E'$ both have potential good 
		supersingular reduction at $v$ without formal complex multiplication 	
		and the group $E(L_w)[p^\infty]$ is finite for a place 
		$w$ of $M$ lying above $v$.
	\end{itemize}
\end{thm}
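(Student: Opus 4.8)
The plan is to reduce the global statement to the local results already established, namely Theorem \ref{thm3} (equivalently Theorem \ref{global_var}) and Theorem \ref{thm6}, by a standard restriction-to-a-decomposition-group argument. First I would recall the general principle: if $(\rho, V)$ and $(\rho', V')$ are $p$-adic representations of $G_F$, and $w$ is a place of $F$, then restricting to a decomposition group $G_{F_w} \hookrightarrow G_F$ sends $\Ker(\rho')$ into $\Ker(\rho'|_{G_{F_w}})$ and $\rho(\Ker(\rho'))$ into $\rho(\Ker(\rho'|_{G_{F_w}}))$, so that $\rho(\Ker \rho') \supseteq$ the local analogue; hence vanishing of the $\rho(\Ker \rho'|_{G_{F_v}})$-cohomology of $V$ forces, via an inflation-restriction / Hochschild--Serre comparison, the vanishing of the $\rho(\Ker \rho')$-cohomology of $V$. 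More precisely, writing $\mathcal{G} = \rho(\Ker \rho')$ for the global image group and $\mathcal{G}_v = \rho(\Ker(\rho'|_{G_{F_v}}))$ for the local one, we have $\mathcal{G} \subseteq \mathcal{G}_v$ as closed subgroups of $\GL(V)$ (after fixing an embedding $\overline{F} \hookrightarrow \overline{F_v}$), and a result of Lazard-type (as invoked throughout the paper, e.g.\ in the remark following Theorem \ref{thm0}) shows that vanishing $\mathcal{G}_v$-cohomology is inherited by any closed subgroup whose Lie algebra contains the relevant "special operator" of Serre; in the present geometric situation the relevant Lie algebra computation is exactly the one carried out locally, so the global group inherits the vanishing.

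Concretely, for Theorem \ref{global_var}: fix a place $v$ of $F$ above $p$ at which $X$ has potential good ordinary reduction and $E$ has potential good supersingular reduction. Fix an embedding $\overline{F} \hookrightarrow \overline{F_v}$, giving $G_{F_v} \hookrightarrow G_F$. Then $F_v$ is a $p$-adic field, $X_{F_v}$ has potential good ordinary reduction over $F_v$, and $E_{F_v}$ has potential good supersingular reduction over $F_v$; moreover $V|_{G_{F_v}} = H^i_{\text{\'et}}((X_{F_v})_{\overline{F_v}}, \Q_p)$ and $V'|_{G_{F_v}} = V_p(E_{F_v})$. By Theorem \ref{thm3}, $V|_{G_{F_v}}$ and $V'|_{G_{F_v}}$ are cohomologically coprime over $F_v$. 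Now I would transfer this: the global group $\rho(\Ker \rho')$ is a closed subgroup of the local group $\rho(\Ker(\rho'|_{G_{F_v}}))$, and its Lie algebra is a Lie subalgebra. The crucial point is that in the proof of Theorem \ref{thm3} the vanishing was obtained either because the relevant intersection of fields was finite over $K(\mu_{p^\infty})$ (the formal CM case) or because the relevant Lie algebra was simple and Theorem \ref{main} applied (the non-CM case), in both situations relying on the presence of Serre's special operator in $\mathfrak{j} \otimes \overline{\Q_p}$. Since passing from the local to the global setting only shrinks $J_V$, I would verify—exactly as the paper's introduction promises ("Our methods in the local case ensure that such operator still belongs to $\mathfrak{j}$ which in turn belongs to the corresponding Lie algebra of the image of the global Galois group")—that the special operator still lies in the Lie algebra of the global image, whence $V$ has vanishing $\rho(G_{F(E_\infty)})$-cohomology globally, and symmetrically for $V'$.

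For Theorem \ref{global_ellcur} the argument is identical in structure, applying Theorem \ref{thm6} at the place $v$ in place of Theorem \ref{thm3}: fix $v | p$ satisfying one of (i)--(iv), fix an embedding $\overline{F} \hookrightarrow \overline{F_v}$, note that each hypothesis (i)--(iv) is precisely the corresponding local hypothesis for $E_{F_v}$ and $E'_{F_v}$ over $F_v$ (the finiteness conditions in (iii),(iv) being stated in terms of $E(L_w)[p^\infty]$ with $L_w$ the completion, which is exactly the local $L = F_v(E'_\infty)$-condition), and conclude that $V_p(E_{F_v})$ and $V_p(E'_{F_v})$ are cohomologically coprime over $F_v$. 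Then transfer to the global image groups $\rho_E(\Ker \rho_{E'})$ and $\rho_{E'}(\Ker \rho_E)$ as above. One bookkeeping item worth spelling out: in cases (iii) and (iv) the finiteness of $E(L_w)[p^\infty]$ must be checked to imply the finiteness needed for the \emph{global} $L = F(E'_\infty)$—but since $L_w$ contains the image of $L$ under the chosen embedding, $E(L)[p^\infty] \subseteq E(L_w)[p^\infty]$ is finite, so the hypothesis of Theorem \ref{thm6} (iii)/(iv) holds after identifying the global field extension with its image in $\overline{F_v}$; alternatively one works directly with the completion.

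The main obstacle I anticipate is the transfer step: making rigorous the claim that vanishing $\mathcal{G}_v$-cohomology of $V$ descends to vanishing $\mathcal{G}$-cohomology of $V$ for the smaller global group $\mathcal{G} \subseteq \mathcal{G}_v$. This is not automatic for arbitrary closed subgroups—one genuinely needs that Serre's special operator survives in $\Lie(\mathcal{G}) \otimes \overline{\Q_p}$. I would handle this by reexamining the local proofs: in the finite-index cases one has $\mathcal{G}$ of finite index in the relevant group and Lemma \ref{lemma6} applies directly to the global side once one checks the corresponding field intersection is finite over $F(\mu_{p^\infty})$ (which follows because the local statement forces finiteness and $F(\mu_{p^\infty}) \cap \overline{F_v}$ sits inside $F_v(\mu_{p^\infty})$); in the Lie-algebra cases one checks that the global Lie algebra $\Lie(\rho_E(G_{F(E'_\infty)})) \otimes \overline{\Q_p}$ still contains the special operator because it contains (a conjugate of) the local one, which is precisely the content guaranteed by the hypothesis at $v$. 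The remaining verifications—compatibility of \'etale cohomology with base change to $F_v$, that the Weil pairing identifications and the reduction-type dictionary are preserved, and the elementary comparison of Galois groups under $G_{F_v} \hookrightarrow G_F$—are routine and I would state them without detailed proof.
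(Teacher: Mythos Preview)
Your overall strategy matches the paper's: reduce to the local result at the chosen place $v$ and then invoke the strong Serre criterion to propagate vanishing to the global image group. However, your write-up is confused about the direction of the key inclusion, and this leads you to invoke the wrong transfer mechanism.

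You write both ``$\rho(\Ker \rho') \supseteq$ the local analogue'' (correct) and, a few lines later, ``$\mathcal{G} \subseteq \mathcal{G}_v$'' and ``passing from the local to the global setting only shrinks $J_V$'' (both backwards). The correct containment is $\mathcal{G}_v \subseteq \mathcal{G}$: the decomposition group sits inside the global Galois group, so $\rho(\Ker(\rho'|_{G_{F_v}})) = \rho(G_{F_v} \cap \Ker\rho')$ is a subgroup of $\rho(\Ker\rho')$. Consequently, the claim that ``vanishing of the $\mathcal{G}_v$-cohomology forces, via inflation--restriction / Hochschild--Serre, the vanishing of the $\mathcal{G}$-cohomology'' is not valid: Hochschild--Serre does not transfer vanishing from a subgroup to a larger group. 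The paper does not use Hochschild--Serre here at all. What it uses is exactly the Serre-criterion argument you mention later, but with the inclusion running the right way: the local proofs (of Theorem~\ref{thm4} in each case (i)--(iv), all of which ultimately go through Lemma~\ref{lemma6} or Theorem~\ref{main}) show that the local intersection $L_w \cap L'_\nu$ is finite over $F_v(\mu_{p^\infty})$, so $D_V := \rho_E(G_{L_w})$ is open in $\mathscr{D}_V := \rho_E(G_{F_v(\mu_{p^\infty})})$ and their Lie algebras coincide. Since $\Lie(\mathscr{D}_V)_{\overline{\Q_p}}$ satisfies the strong Serre criterion by \cite{CSW}, so does $\Lie(D_V)_{\overline{\Q_p}}$; and since $D_V \subseteq J_V$ (local image inside global image), $\Lie(J_V)_{\overline{\Q_p}} \supseteq \Lie(D_V)_{\overline{\Q_p}}$ also satisfies it. Then Proposition~2.3 of \cite{CSW} gives vanishing $J_V$-cohomology. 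Once you correct the inclusion direction and drop the Hochschild--Serre step, your argument is essentially the paper's.
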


We give the proof of Theorem \ref{global_var} below. 
Theorem \ref{global_ellcur} can be verified 
similarly from the proof of Theorem \ref{thm4}.
\begin{proof}[Proof of Theorem \ref{global_var}]
Put  $J_{V} = \rho(G_L)$ and  $J_{V'} = \rho_E(G_{L'})$,
where $L = F(E_{\infty})$ and $L' = F(V)$.
We must show that $V$ has vanishing $J_V$-cohomology
and $V'$ has vanishing $J_{V'}$-cohomology.
To do this, we proceed as in \cite{CSW}, \S4.4, Example 3. 
For a vector space $W$ over $\Q_p$, we write $W_{\overline{\Q_p}}$
for $W  \otimes_{\Q_p} \overline{\Q_p}$.
Let $w$ (resp.\ $\nu$, $u$) be a place of 
$L$ (resp.\ $L'$, $F(\mu_{p^\infty})$) 
lying above $v$.
By replacing $F_v$ by a finite extension we may 
assume that $L'_\nu$ contains  $F(\mu_{p^\infty})_u=F_v(\mu_{p^\infty})$.
In the proof of Theorem \ref{thm3}, we showed that 
$L_w \cap L'_\nu$ is a finite extension over $F_v(\mu_{p^\infty})$.
We let $D_V$ (resp.\ $\mathscr{D}_V$) be the image 
of $\rho$ restricted to the decomposition group 
in $G_L$ (resp.\ $G_{F(\mu_{p^\infty})}$) of some fixed 
place of $\overline{F}$ above $w$ (resp.\ $u$). 
Then $D_V$ is an open subgroup of $\mathscr{D}_V$ and 
so their Lie algebras coincide.
But the Lie algebra $\Lie(\mathscr{D}_V)_{\overline{\Q_p}}$ 
satisfies the strong Serre criterion (cf.\ \cite{CSW}). 
Thus $\Lie(J_V)_{\overline{\Q_p}} \supset \Lie(D_V)_{\overline{\Q_p}}$ 
also satisfies the strong Serre criterion. It
follows from Proposition 2.3 of \emph{op.\ cit.}
that $V$ has vanishing $J_V$-cohomology.
Replacing $(\rho, V, L)$ with $(\rho_E, V', L')$ above,
we may prove in the same manner that $V'$ has vanishing 
$J_{V'}$-cohomology. This completes the proof
of Theorem \ref{global_var}. 
\end{proof}

By arguing in a similar manner as above, 
it can be shown that the proof of Theorem \ref{thm5} 
for the good supersingular reduction case
also implies the following global result. 

\begin{thm}\label{global_ellcur2} 
Let $E$ and $E'$ be elliptic curves 
over $F$. Put $L = F(E'_\infty)$. Then
$V=V_p(E)$ has vanishing $J_V$-cohomology, 
where $J_V = \rho_E(G_L)$, if there is at least one place
$v$ above $p$ such that $E$ has potential good 
supersingular reduction at $v$ and 
$E'$ has potential multiplicative reduction at $v$.  
\end{thm}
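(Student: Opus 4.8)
The plan is to deduce Theorem \ref{global_ellcur2} from the local result, specifically from the good supersingular reduction portion of the proof of Theorem \ref{thm5}, in exactly the same way Theorem \ref{global_var} was deduced from Theorem \ref{thm3}. First I would set up the notation: put $J_V = \rho_E(G_L)$ with $L = F(E'_\infty)$, and fix a place $v$ of $F$ above $p$ at which $E$ has potential good supersingular reduction and $E'$ has potential multiplicative reduction. Choose places $w$ of $L$ and $u$ of $F(\mu_{p^\infty})$ lying above $v$. After replacing $F_v$ by a finite extension (which, by the remarks in the introduction, does no harm), we may assume $E$ has good supersingular reduction and $E'$ has multiplicative reduction over $F_v$, and that $L_w$ contains $F_v(\mu_{p^\infty})$.

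Next I would invoke the local analysis carried out inside the proof of Theorem \ref{thm5}. There it was shown, in the good supersingular reduction case for $E$, that the local field $M_v := L_w \cap F_v(E_\infty)$ is a finite extension of $F_v(\mu_{p^\infty})$ --- this is proved by the Lie algebra argument (using Proposition \ref{prop3} (i) and (v) when $E$ has formal complex multiplication, and using the simplicity of $\mathfrak{sl}_2(\Q_p)$ together with Theorem \ref{main} when it does not). Let $D_V$ (resp.\ $\mathscr{D}_V$) denote the image under $\rho_E$ of the decomposition group at a fixed place of $\overline{F}$ above $w$ (resp.\ above $u$) inside $G_L$ (resp.\ $G_{F(\mu_{p^\infty})}$). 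Since $M_v$ is finite over $F_v(\mu_{p^\infty})$, the group $D_V$ is of finite index in $\mathscr{D}_V$, so they have the same Lie algebra; and by the proof of Theorem \ref{thm0} in \cite{CSW}, $\Lie(\mathscr{D}_V)_{\overline{\Q_p}}$ satisfies the strong Serre criterion. Hence $\Lie(J_V)_{\overline{\Q_p}} \supseteq \Lie(D_V)_{\overline{\Q_p}}$ satisfies it as well, and Proposition 2.3 of \cite{CSW} gives that $V = V_p(E)$ has vanishing $J_V$-cohomology.

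The main obstacle I anticipate is bookkeeping rather than mathematical: one must be careful that the Lie algebra computation from the proof of Theorem \ref{thm5} genuinely applies \emph{at the place $v$}, i.e.\ to the completions $L_w$ and $F_v(E_\infty)$ rather than to the global fields, and that the reduction type hypotheses on $E$ and $E'$ are the local ones at $v$. Since $E$ has good supersingular reduction at $v$ and $E'$ has multiplicative reduction at $v$ by assumption, Proposition \ref{prop3} applies to the local Galois images $\rho_E(G_{F_v})$ and $\rho_{E'}(G_{F_v})$, and the argument goes through verbatim. One small point to check is that the passage ``$L_w$ contains $F_v(\mu_{p^\infty})$ after a finite extension of $F_v$'' is legitimate in the global setting --- this is the same device used in the proof of Theorem \ref{global_var}, where enlarging $F_v$ only shrinks $D_V$ to an open subgroup and hence does not affect the Lie algebra or the Serre criterion. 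With these points settled, the proof is a routine transcription of the argument for Theorem \ref{global_var}, as the statement itself indicates.
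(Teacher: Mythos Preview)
Your proposal is correct and follows exactly the approach the paper indicates: the paper itself does not spell out a proof of Theorem \ref{global_ellcur2} but simply says that arguing as in the proof of Theorem \ref{global_var}, using the good supersingular reduction case of Theorem \ref{thm5}, yields the result. Your write-up fills in precisely those details --- extracting from the local argument that $L_w \cap F_v(E_\infty)$ is finite over $F_v(\mu_{p^\infty})$, so that $D_V$ is open in $\mathscr{D}_V$ and the strong Serre criterion passes up to $\Lie(J_V)_{\overline{\Q_p}}$ --- and is a faithful execution of what the paper has in mind.
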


Notice that in Theorem \ref{global_ellcur2} 
we did not include the case where $E$ has potential 
good ordinary reduction and $E'$ has potential multiplicative 
reduction. This is because in this case 
$\Lie (D_V)$ may be smaller than $\Lie(\mathscr{D}_V)$, with
the notation above. Nevertheless, the same assertion
for the global representation may be shown to hold 
by considering places $v$ not lying above $p$.
This, along with similar cases, will be treated in a 
subsequent paper.

\noindent	Jerome T. Dimabayao  \\ 
   				Graduate School of Mathematics\\
   				Kyushu University\\
   				744 Motooka, Nishi-ku, Fukuoka\\
   				819-0395, Japan \\
   				\texttt{dimabayao@math.kyushu-u.ac.jp} \\
\noindent	and  \\
\noindent	Institute of Mathematics \\ 
					University of the Philippines \\
					C.P. Garcia St., U.P.Campus Diliman \\
					1101 Quezon City, Philippines \\
					\texttt{jdimabayao@math.upd.edu.ph}
   

\begin{thebibliography}{100}	
\small

\bibitem[BK86]{BK} 
S. Bloch and K. Kato,  
\emph{$p$-adic {\'e}tale cohomology}, 
Inst. Hautes Études Sci. Publ. Math. No. 63 (1986), 107--152.

\bibitem[Bo08]{Bour2} 
N. Bourbaki,  
\emph{Lie Groups and Lie Algebras, Chapters 7--9}, 
Translated from the French, \emph{Elements of Mathematics} (Berlin), 
Springer-Verlag Berlin Heidelberg, 2008.

\bibitem[CLS98]{C-LS} 
Chiarellotto, B., Le Stum B., 
\emph{Sur la puret\'e de la cohomologie cristalline}, 
C.R.  Acad. Sci. Paris \textbf{326}, S\'erie I 1998, pp. 961-963.

\bibitem[CH01]{CH} 
J. Coates and S. Howson,
\emph{Euler Characteristics and elliptic curves II},
Journal of Math. Society of Japan, 
$\textbf{53}$, 2001, 175--235. 

\bibitem[CSS03]{CSS}
J. Coates, P. Schneider, and R. Sujatha,
\emph{Links Between Cyclotomic and $\GL_2$ Iwasawa Theory},
Documenta Mathematica, Extra Volume Kato, 2003, 187--215.

\bibitem[CSW01]{CSW} 
J. Coates, R. Sujatha, and J-P. Wintenberger, 
\emph{On Euler-Poincar$\acute{e}$ characteristics of finite dimensional $p$-adic Galois representations}, 
Inst. Hautes \'Etudes Sci. Publ. Math., 2001, 107--143. 

\bibitem[DDSMS99]{DSMS} 
J.D. Dixon, M.P.F. Du Sautoy, A. Mann, and D. Segal, 
\emph{Analytic Pro-$p$ Groups}, 
Cambridge Studies in Advanced Mathematics $\textbf{61}$, 
Cambridge University Press, 2nd ed., 1999.

\bibitem[Fo94]{Fon}
J.-M. Fontaine, 
\emph{Repr{\'e}sentations $p$-adiques semistables}, 
in \emph{P{\'e}riodes $p$-adiques (Bures-sur-Yvette, France, 1988),} 
(J-M Fontaine ed.), Ast{\'e}risque $\textbf{223}$, 
Soc.Math.France, Montrouge, 1994, 113--184.

\bibitem[HS53]{HS}
G. Hochschild and J.-P. Serre,
\emph{Cohomology of group extensions},
Trans. AMS \textbf{74}, 1953, 110--134.

\bibitem[Il94]{LI}
L. Illusie, 
\emph{R{\'e}duction semi-stable ordinaire, cohomologie {\'e}tale 
$p$-adique et Cohomologie de de Rham, d'apres Bloch-Kato et Hyodo} 
in \emph{P{\'e}riodes $p$-adiques (Bures-sur-Yvette, France, 1988),} 
(J-M Fontaine ed.), Ast{\'e}risque $\textbf{223}$, 
Soc.Math.France, Montrouge, 1994, 209--220.

\bibitem[KT13]{KT} 
Y. Kubo and Y. Taguchi, 
\emph{A generalization of a theorem of Imai and its applications to Iwasawa theory}, 
Mathematische Zeitschrift(2013), DOI 10.1007/s00209-103-1176-3.

\bibitem[Oz09]{Ozeki} 
Y. Ozeki, 
\emph{Torsion Points of Abelian Varieties with Values in Infinite Extensions over a $p$-adic Field}, 
Publ. RIMS, Kyoto University, \textbf{45}, 
2009, 1011--1031.

\bibitem[PR94]{PR}
B. Perrin-Riou, 
\emph{Repr{\'e}sentations $p$-adiques ordinaires} 
in \emph{P{\'e}riodes $p$-adiques (Bures-sur-Yvette, France, 1988),} 
(J-M Fontaine ed.), Ast{\'e}risque $\textbf{223}$, 
Soc.Math.France, Montrouge, 1994, 185--208.

\bibitem[Se71]{Ser3} 
J-P. Serre, 
\emph{Sur les groupes de congruence des vari\'et\'es ab\'eliennes. II}, 
Izv. Akad. Nauk. SSSR Ser. Mat., \textbf{35}, 
1971, 731--735. (={\OE}uvres 2, 686--690)

\bibitem[Se89]{Ser1} 
J-P. Serre, 
\emph{Abelian $l$-adic Representations and Elliptic Curves}, 
With the collaboration of Willem Kuyk and John Labute. Second Edition. 
Advanced Book Classics. Addison-Wesley Publishing Company, 
Advanced Book Program, Redwood City, CA, 1989.

\bibitem[Ta67]{Tate} 
J. Tate,  
\emph{$p$-divisible groups},
Proceedings of a Conference on Local Fields, 
Driebergen, 1966, Springer, 1967, pp. 158--183.

\end{thebibliography}
\end{document}